\def\le{\leqslant}
\def\ge{\geqslant}
\newcommand{\beq}{\begin{equation}}
\newcommand{\eeq}{\end{equation}}
\newcommand{\beqst}{\begin{equation*}}
\newcommand{\eeqst}{\end{equation*}}
\newcommand{\bal}{\begin{align}}
\newcommand{\eal}{\end{align}}
\newcommand{\balst}{\begin{align*}}
\newcommand{\ealst}{\end{align*}}
\newtheorem{lemma}{Lemma}[section]
\newtheorem{theorem}[lemma]{Theorem}
\newtheorem{proposition}[lemma]{Proposition}
\newtheorem{corollary}[lemma]{Corollary}
\newtheorem{notation}[lemma]{Notation}
\newcommand{\N}{{\mathbb N }}
\newcommand{\R}{{\mathbb R}}
\newcommand{\C}{{\mathbb C}}
\newcommand{\eps}{{\varepsilon }}
\def\({\left(}
\def\){\right)}
\def\<{\left\langle}
\def\>{\right\rangle}
\def\O{\mathcal O}
\def\mcN{\mathcal N}
\newcommand{\Id}[1]{{\rm I\kern-2pt I_{#1}}}
\renewcommand{\hbar}{{\displaystyle\bar{\phantom{x}}\kern-6pt h}}
\numberwithin{equation}{section}
\newcommand\cc{\mathrm{c.c.}}
\newcommand{\pl}{\partial}
\newcommand{\omj}{\omega_j}
\newcommand{\omi}{\omega_i}
\newcommand{\omk}{\omega_k}
\def\om{{\omega}}
\def\Om{{\Omega}}
\def\mcN{\mathcal N}
\def\mcG{\mathcal G}
\def\mcR{\mathcal R}
\def\mcE{\mathcal E}
\def\mcH{\mathcal H}
\def\mfP{\mathfrak P}
\def\mfe{\mathfrak e}
\def\mfS{\mathfrak S}
\def\ds{\displaystyle}
\def\ep{\epsilon}
\def\ve{\varepsilon}
\def\bk{\xi}
\def\ii{\mathrm{i}}
\def\ee{\,\mathrm{e}}
\def\psia{\psi_{a}}
\def\zetaa{\zeta_{a}}
\def\Gm0{\mcG_{\mu0}}
\def\bond{ { \textstyle \frac1{\mathrm{Bo}}}}
\def\nablabo{ \nabla_{ \mathrm{Bo}\, } }
\def\ds{\displaystyle}
\def\ts{\textstyle}
\begin{document}

\title[Interaction of modulated gravity water waves of finite depth]{Interaction of modulated\\ gravity water waves of finite depth
%with flat bottom
}

\author{% W.~H.~Aschbacher and 
Ioannis Giannoulis }
\address{Department of Mathematics, University of Ioannina, GR-45110 Ioannina, Greece}
\email{giannoul@uoi.gr}

\begin{abstract} 
We consider the capillary-gravity water-waves problem of finite depth with a flat bottom of one or two horizontal dimensions.
We derive the modulation equations of leading and next-to-leading order in the hyperbolic scaling for three weakly amplitude-modulated plane-wave solutions of the linearized problem in the absence of quadratic and cubic resonances.
We fully justify the derived system of macroscopic equations in the case of pure gravity waves, i.e.\ in the case of zero surface tension, employing the stability of the water-waves problem on the 
%relevant 
time-scale $O(1/\ep)$ obtained by Alvarez-Samaniego and Lannes.
\end{abstract}
\subjclass[2010]{Primary 76B15; 76B45, 35B35, 35L03, 35L05}
%, 35Q35}
%\keywords{...}
%\thanks{...} 

% \date{April 26, 2015}
\date\today
%\date{December 1, 2015}
% start: 250314

\maketitle
%%%%%%%%%%%%%%%%%%%%%%%%%%%%%%%%%%%%%%%%%%%%%%%%%%%%%%%%%%%%%%%%%%%%%

\section{Introduction} \label{SectionIntroduction}

A significant part in the research on water waves 
%, be it gravity or capillary-gravity waves, 
is based on the study of asymptotic limits derived from the original water-waves problem, 
which is considered as providing the complete description of their behavior. 
The benefit of this method is that these reduced models have, on the one hand, a clearer structure
%, and are thus easier to treat, 
and, on the other hand,  
% that, naturally, the derived asymptotic limits 
they 
highlight 
%qualitatively 
a particular qualitative feature of the wave evolution. 
Depending on the aspect of the nature of the water waves one is interested in, one has to employ the relevant asymptotic scaling, 
obtaining in each case a different macroscopic limit. 
Considering that from the outset the original water-waves problem has some fundamental characteristics, 
the result is a plethora of different equations which are presumed to describe approximatively the behavior of water waves in different situations and regarding different aspects.
%
%However, 
While the choice of the relevant asymptotic scaling requires a thorough understanding of the initial model, from an analytical point of view the crucial question is the justification of the derived model, i.e.\ the proof that its solutions indeed are approximations of solutions to the original problem. 

Concerning the initial set-up of the water waves problem one could distinguish roughly between (a) gravity or capillary-gravity waves (the latter ones taking into account together with gravity also the surface tension as driving forces for the evolution of the waves), (b) finite- or infinite-depth water, and (c) two- or three-dimensional space, where in the former case one considers a  vertical plane in the water domain that contains the (dominant) direction of evolution,  assuming that the waves are (nearly) constant in the direction normal 
% transversal
to the plane. 
Of course, this is only a very rough classification (e.g.\ in the case of finite depth one can consider shallow or deep water, flat bottoms or bottoms with some (smooth or rough) topography, or  even moving bottoms etc.), but it seems to be the prevalent one in the mathematical-analytical literature, where the fundamental question concerns the well-posedness of the water-waves problem and in particular the existence time of its solutions. 

%Here, 
In the case of gravity water waves in two dimensions and for infinite depth, first local well-posedness results were obtained by Nalimov \cite{Nalimov74} in 1974 for small Sobolev initial data, and by Shinbrot \cite{Shinbrot76}
% in 1976 
and Kano and Nishida \cite{KanoNishida79}
%in 1979 
for analytic initial data. 
The method of Nalimov was employed to prove local well-posedness in the case of finite depth by Yosihara \cite{Yosihara82} 
%in 1982 
and by Craig \cite{Craig85}, 
%in 1985 
who obtained also first rigorous justification results of the Korteweg-de Vries (KdV) and Boussinesq 
%%% ??? 
approximations. 
However, in the case of infinite depth, the crucial breakthrough
%most significant progress 
was made by the work of S.~Wu, who presented local well-posedness results for the two- and three-dimensional cases without smallness assumptions on the initial data in \cite{Wu97}, \cite{Wu99}, 
%in the late 90's, 
%and extended them to obtain almost global  and global existence results in \cite{Wu09}, \cite{Wu11} in 2009, 2011, respectively.  
which she extended to almost global and global existence, respectively, in \cite{Wu09}, \cite{Wu11}. 
Independently, global existence in three dimensions was shown by Germain, Masmoudi, and Shatah in \cite{GermainMasmoudiShatah12}.

In the case of finite depth the first general well-posedness result  for gravity waves in three dimensions was obtained by Lannes \cite{Lannes05} in 2005. 
This result was extended in \cite{ASL08}, where the characteristic dimensionless parameters of the original water-waves problem have been worked out in order to provide a stable fundament for the derivation and justification of various asymptotic limits. 
The results obtained, are presented in more 
detail in the survey \cite{Lannes}, to which we take explicit reference in the present paper. 
%
%In particular, our justification result, Theorem \ref{JustificationNonresonantInteraction}, 
%is a consequence of the stability property of the water-waves problem on macroscopic times of order $1/\ep$. 
%
% on which also the results obtained here are established. 
%
% Finally, concerning 
Concerning the well-posedness of capillary-gravity water-waves,
 we mention exemplarily only the more recent selection 
\cite{
%Yosihara83, 
BeyerGunther98,  
SchneiderWayne02,
Iguchi01,
ChistodoulouLindblad00, 
Schweizer05, 
CoutandShkoler07, 
%AmbroseMasmoudi05, 
AmbroseMasmoudi09, 
MingZhang09, 
AlazardBurgZuily11, 
GermainMasmoudiShatah15, 
Lannes13}, 
to which we refer for more details on the various results obtained, their development and their extensions.
% EVENTUELL DIESE LISTE AUSDUENNEN
%
%%% ONE COULD ALSO MENTION THE DIFFERENT FORMULATIONS OF THE PROBLEMS: 
%%% EULERIAN COORDINATES, LAGRANGIAN COORDINATES, GEODESIC FLOW, VARIATIONAL (LAGRANGIAN OF THE FLOW),
%%% AND PROBABLY ALSO OTHERS (SEE \cite{Lannes})

As mentioned 
%in the beginning, 
above, 
for each original water-wave problem with its own characteristics, different asymptotic limits can be obtained. 
%
 %Here, the 
 The
 main distinction of the derived models is with respect to the shallowness parameter $\mu = H_0^2/L^2$ of the original equations, 
where $H_0>0$ is the water depth and  $L=1$ is the characteristic horizontal length-scale. For $\mu\ll 1$ we speak of shallow water, 
while for $\mu\approx 1$ and $\mu\ge 1$ of deep water (with the limiting case of infinite depth as $\mu\to\infty$).
This classification is not arbitrary. 
Indeed, the main difference in the behavior of water-waves in these two cases is that for increasing water depth the r\^ole of dispersion
% effects become 
becomes more dominant, see, e.g., \cite[\S 1.3]{Lannes}.
Of course within each of these two main classes of models, finer distinctions can be, and indeed are, made.
Since in the present article we consider the deep (though finite) water case $\mu\ge 1$, 
% which allows for modulation equations, 
we refrain to mention any of the various shallow water models, but refer to the survey \cite{Lannes}, which seems to give a complete account of the "state-of-the-art" in 2013 concerning their derivation and justification.
%, and which can used as a guide through the relevant literature.
%
%
However, we would like to mention the justification of the celebrated KdV-equation in the two-dimensional case in \cite{SchneiderWayne00,SchneiderWayne02} after some first results in \cite{Craig85,KanoNishida86}, and with improvements in \cite{BonaColinLannes05,Iguchi07,Iguchi08}.
% NUR EINEN IGUCHI BEHALTEN

We will address modulation equations further below, after presenting in the following the capillary-gravity water waves equations.

\

% The water waves problem; well-posedness on the right time-scale; presentation of the theorem of Lannes; note that in this form it holds for finite depth; why three pulses; nonlinearities; resonances; the role of surface tension; spaces in which to estimate the residual; energy norm;...

The capillary-gravity water waves problem of finite depth 
%$\sqrt{\mu} \in (0,\infty)$
$0 < \sqrt{\mu} < \infty$ 
with a flat bottom extending over all of $\R^d$, $d=1,2$, 
can be written in the following non-dimensionalized form, due to Zakharov \cite{Zakharov68}, Craig, C.~Sulem, P.-L.~Sulem \cite{CS93,CSS92, SulemSulem}, and Alvarez-Samaniego, Lannes \cite{ASL08, Lannes}: 
\begin{align}\label{wwe}
\partial_t U +\mcN_{\ep, \sigma} (U)  = 0, \qquad U= (\zeta,\psi)^T, \qquad \mcN_{\ep, \sigma} = ( \mcN_{\ep, \sigma}^1,  \mcN_{\ep, \sigma}^2 )^T,
\end{align}
where
\begin{align} \label{mcNsigma1}
\mcN_{\ep, \sigma}^1 (U) & = -  \mcG[\ep \zeta]\psi, 
% \qquad \mcG[\ep \zeta] \psi = {\textstyle \frac1{\sqrt{\mu}}} \mcG_{\mu,1}\big[\textstyle{ \frac{\ep}{\sqrt{\mu}}}\zeta,0\big] \psi, 
\\
\label{mcNsigma2}
\mcN_{\ep, \sigma}^2 (U) & = \zeta 
 -  \bond \nabla\cdot \Big( \frac{\nabla \zeta}{\sqrt{1+\ep^2 |\nabla\zeta|^2} } \Big) 
%+ \frac\ep2 \left( |\nabla\psi|^2 - \frac{(\mcG[\ep \zeta]\psi + \ep \nabla\zeta\cdot \nabla \psi)^2}{1+\ep^2 |\nabla\zeta|^2} \right)
 + \frac\ep2 |\nabla\psi|^2 - \frac\ep2 \frac{(\mcG[\ep \zeta]\psi + \ep \nabla\zeta\cdot \nabla \psi)^2}{1+\ep^2 |\nabla\zeta|^2}.
\end{align} 
Here, the unknown functions of time $t\in[0, T)$, $T>0$, and space $X\in \R^d$ are the surface elevation $\zeta:[0,T)\times\R^d\to\R$ 
and the trace $\psi:[0,T)\times\R^d\to\R$ of the velocity potential of the fluid at the surface.  
The scaling parameter $0<\ep\le 1$ is the steepness of the wave, i.e., the ratio of the amplitude of the surface elevation above the still water level $\zeta=0$ to the characteristic horizontal length
%$L_x = L_y =1$.
$L=1$.
%, which we assume to equal $1$.

The second term in \eqref{mcNsigma2} corresponds to the surface tension, which is essentially the mean curvature of the surface scaled by the (inverse) Bond number $\bond  = \frac{\sigma}{\rho g}$, where $\sigma,\rho, g$ are the (dimensionless) coefficients of the surface tension, the fluid density and the gravity acceleration, respectively. 
When $\sigma=0$, this term is absent and we speak of gravity water waves.

The most important term in the above formulation is the Dirichlet-Neumann operator
\begin{align}\label{DNoperator}
\mcG[\ep \zeta]\psi 
%= \frac1{\sqrt{\mu}} \mcG_{\mu,1}\big[\textstyle{ \frac{\ep}{\sqrt{\mu}}}\zeta,0\big] \psi
%&=  \frac1{\sqrt{\mu}} \mcG_{\mu,1}\big[\textstyle{ \frac{\ep}{\sqrt{\mu}}}\zeta,0\big] \psi
%\\
%& = \frac1{\sqrt{\mu}} \sqrt{1+\frac{\ep^2}\mu |\nabla\zeta|^2} \pl_{\bf n} \Phi|_{z=\frac\ep{\sqrt{\mu}}\zeta}
%\\ 
%&=  \frac1{\sqrt{\mu}}  \pl_z \Phi|_{z=\frac\ep{\sqrt{\mu}}\zeta} - \ep \nabla\zeta \cdot \nabla_X \Phi|_{z=\frac{\ep}{\sqrt{\mu}} \zeta }     
%\\ 
%&=  \frac1{\sqrt{\mu}}  \pl_z \Phi(\cdot, \frac\ep{\sqrt{\mu}}\zeta) - \ep \nabla\zeta \cdot \nabla \Phi(\cdot, \frac{\ep}{\sqrt{\mu}} \zeta)   
%\\
&=  \pl_z \Phi(\cdot, \ep \zeta) - \nabla ( \ep \zeta ) \cdot \nabla \Phi(\cdot, \ep \zeta)
= \sqrt{1 + |\nabla(\ep\zeta)|^2} \pl_{\bf n} \Phi(\cdot,\ep\zeta) 
\end{align}
where the velocity potential of the fluid $\Phi$ solves the boundary value problem for 
the Laplace equation 
\begin{equation}\label{Laplace}
\begin{cases}
\Delta_{X,z} \Phi = 0, \quad  - \sqrt{\mu} \le z \le\ep\zeta,
\\ \Phi(\cdot, \ep\zeta) = \psi, \quad \pl_z \Phi (\cdot, - \sqrt{\mu}) =0
 \end{cases}
\end{equation}
in the fluid domain at time $t\ge 0$,
$$\Om_{\ep, t} =\{(X,z)\in \R^{d+1}: -\sqrt{\mu} \le z \le \ep \zeta(t, X) \},$$
with Dirichlet data $\psi$ at the surface and Neumann boundary data at the bottom. 
With ${\bf n}$ in \eqref{DNoperator} being the upward unit normal vector at the surface $\ep\zeta$, 
we see that the Dirichlet-Neumann operator $\mcG[\ep\zeta]$ relates the Dirichlet data $\psi$ 
to the normal derivative of the potential $\Phi$ at the surface, thus justifying its name.  
In particular, the first equation of the system \eqref{wwe}, 
\begin{equation}\label{wwe1}
\pl_t \zeta - \mcG[\ep\zeta]\psi = 0,
\end{equation}
codifies the physical assumption that fluid particles at the surface stay there for all times.  
We note also that the Dirichlet-Neumann operator is linear in $\psi$ but nonlinear in $\zeta$.

The second equation of the gravity water-waves problem \eqref{wwe}  (with $\sigma = 0$),
\begin{equation}\label{wwe2}
\pl_t \psi + \zeta 
 + \frac\ep2 |\nabla\psi|^2 - \frac\ep2 \frac{(\mcG[\ep \zeta]\psi + \ep \nabla\zeta\cdot \nabla \psi)^2}{1+\ep^2 |\nabla\zeta|^2} =0,
\end{equation}
originates from the Euler equation for the fluid velocity $\nabla_{X,z} (\ep \Phi) $ of an inviscid, homogeneous, incompressible, and irrotational fluid in $\Om_{\ep, t}$ under the influence of gravity and with constant external (atmospheric) pressure at the surface. 
Integrating the Euler equation over the space variables $(X,z)$ and evaluating it at the surface $z=\ep\zeta$, one gets 
\begin{align}\label{BernoulliSurface}
& \pl_t (\ep \Phi)(\cdot,\ep\zeta)  +  \frac12 |\nabla_{X,z} (\ep \Phi)(\cdot,\ep\zeta)|^2 + \ep \zeta =0.
%\\
%\Longleftrightarrow \quad & \pl_t \psi  + \zeta - \ep \pl_z\Phi (\cdot,\ep\zeta) 
%%\pl_t \zeta 
%%\mcG[\ep\zeta]\psi
%( \pl_z \Phi(\cdot, \ep \zeta) - \nabla ( \ep \zeta ) \cdot \nabla \Phi(\cdot, \ep \zeta))
% +  \frac{\ep}2 |\nabla \Phi(\cdot,\ep\zeta)|^2 
%+  \frac{\ep}2 ( \pl_z\Phi(\cdot,\ep\zeta))^2 =0
%\\
%\Longleftrightarrow \quad & \pl_t \psi  + \zeta 
%+ \ep^2 ( \pl_z\Phi (\cdot,\ep\zeta) ) (  \nabla \zeta  \cdot \nabla \Phi(\cdot, \ep \zeta))
% +  \frac{\ep}2 |\nabla \psi - \pl_z \Phi(\cdot,\ep\zeta) \ep\nabla\zeta|^2 
%-  \frac{\ep}2 ( \pl_z\Phi(\cdot,\ep\zeta))^2 =0
%\\
%\Longleftrightarrow \quad & \pl_t \psi  + \zeta 
%%+ \ep^2 ( \pl_z\Phi (\cdot,\ep\zeta) )  \nabla \zeta  \cdot \nabla \psi 
%% - \ep ( \pl_z\Phi (\cdot,\ep\zeta) )^2 ( |\ep \nabla\zeta|^2 )
% +  \frac{\ep}2 |\nabla \psi|^2 
% %-  \ep^2 \nabla \psi \cdot \pl_z \Phi(\cdot,\ep\zeta) \nabla\zeta 
% %+  
% -
% \frac{\ep}2 ( \pl_z \Phi(\cdot,\ep\zeta))^2  |\ep\nabla\zeta|^2 
%-  \frac{\ep}2 ( \pl_z\Phi(\cdot,\ep\zeta))^2 =0
%\\
%\Longleftrightarrow \quad & \pl_t \psi  + \zeta  +  \frac{\ep}2 |\nabla \psi|^2 
%- \frac{\ep}2 ( \pl_z \Phi(\cdot,\ep\zeta))^2  (1 + |\ep\nabla\zeta|^2) =0
%\\
%\Longleftrightarrow \quad &\pl_t \psi + \zeta 
% + \frac\ep2 |\nabla\psi|^2 - \frac\ep2 \frac{(\mcG[\ep \zeta]\psi + \ep \nabla\zeta\cdot \nabla \psi)^2}{1+\ep^2 |\nabla\zeta|^2} =0
\end{align}     
By use of the chain rule on $\psi = \Phi(\cdot, \ep\zeta)$, one obtains with \eqref{DNoperator} and \eqref{wwe1} that \eqref{BernoulliSurface} is equivalent to \eqref{wwe2}. 
Thus, determining $U=(\zeta,\psi)$ via the system \eqref{wwe1}, \eqref{wwe2}, we have all required data to solve \eqref{Laplace} for $\Phi$
(under reasonable regularity assumptions on $U$ and under the condition that the flow is at rest as $|(X,z)|\to\infty$). From the Euler equation we can then determine also the pressure of the fluid.
It was Zakharov who noted in \cite{Zakharov68} that the knowledge of $U=(\zeta,\psi)$ is sufficient for solving the water-waves problem in this way, while the use of the Dirichlet-Neumann operator \eqref{DNoperator} in the formulation of the system \eqref{wwe1}, \eqref{wwe2} is mainly due to Craig, C.~Sulem and P.-L.~Sulem in \cite{CS93,CSS92}. 

%%% DAS FOLGENDE AUF WIEDERHOLUNGEN UEBERPRUEFEN UND KUERZEN
% Finally, the 
The non-dimensionalized version \eqref{wwe} of the water-waves problem, that we use for the dispersive, deep water case relevant in this article, is relying on a more general one, derived by Alvarez-Samaniego and Lannes first in \cite{ASL08} and then presented in more detail in \cite{Lannes}, which works out all characteristic parameters of the water wave problem. 
This is particularly useful for a systematic and analytically reliable derivation of all possible asymptotic limits one may be interested in. 
Since the ultimate goal of the present article is the justification (see Section \ref{SectionJustification}) of the modulation equations formally derived in Section \ref{SectionFormalDerivation}, and since our justification result (Theorem \ref{JustificationNonresonantInteraction}) follows directly from the stability property of the water-waves problem as presented by Lannes in \cite{Lannes} (see here Theorem \ref{JustificationTheorem}), we chose to study the water-waves problem from the beginning in the form \eqref{wwe}. This is also the reason for the %  --- at a first glance surely irritating --- 
(at a first glance unusual)
notation of the water-depth by $\sqrt{\mu}$. 
For a full derivation of the water-waves problem in the form \eqref{wwe}, and an extended and detailed overview of its recent analytical state of the art, we refer the reader to \cite{Lannes}.

The water-waves problem \eqref{wwe} has the linearization around $(\zeta,\psi)=(0,0)$ 
\begin{equation}\label{linearization}
\begin{cases}
\pl_t \zeta - \mcG[0]\psi = 0,
\\
\pl_t \psi + \zeta - \bond \Delta\zeta =0,
\end{cases}
\end{equation}
with $\mcG[0]\psi = \pl_z \Phi(\cdot,0)$, where $\Phi$ solves \eqref{Laplace} with Dirichlet data $\Phi(\cdot,0)=\psi$ at the surface $\zeta=0$. 
Considering the Fourier transform 
% $\mathcal F$ 
of \eqref{Laplace} with respect to the horizontal variables $X\in\R^d$, we obtain for each $\xi\in\R^d$ a second-order ODE for $\Phi(\xi,\cdot)$ in the vertical variable $z$ with boundary values at $z=-\sqrt{\mu}$ and $z=0$, which can be solved uniquely, yielding
\begin{equation*} %\label{g0}
% \widehat{\mcG[0] \psi} (\xi) =
\widehat{\pl_z\Phi} (\xi, 0) =
 %|\xi| \tanh (\sqrt{\mu} |\xi|) 
 g_0(\xi)
\widehat{\psi}(\xi)
,\quad g_0 (\xi) = |\xi| \tanh (\sqrt{\mu} |\xi|) 
 ,\quad \xi\in\R^d.   
\end{equation*}
Thus, in the Fourier-multiplier notation
\begin{equation}\label{FourierMultiplier}
\widehat{f(D) u} (\xi) = f(\xi) \widehat{u}(\xi), \quad \xi\in\R^d, \quad \text{with}\quad D = - \ii \nabla,
\end{equation} 
we obtain 
\begin{equation*}
\mcG[0]\psi =  |D| \tanh (\sqrt{\mu} |D|) \psi.
\end{equation*}
Moreover, we obtain that  \eqref{linearization} allows for plane wave solutions of the form 
\begin{equation}\label{constantplanewaves}
%\underline{\psi}
%\begin{pmatrix} 
%%\frac{\ii \underline{\om}}{1 + \bond|\underline{\xi}|^2}
%\ii \underline{\om}/(1 + \bond|\underline{\xi}|^2)
% \\ 1 \end{pmatrix} 
%% \underline{\psi}
%\ee^{\ii (\underline{\xi} \cdot X - \underline{\om} t)} +\cc, 
%\qquad \underline{\xi}\in\R^d,\quad  \underline{\om}\in \R,\quad \underline{\psi} \in \C
\begin{pmatrix} \underline{\zeta} \\ \underline{\psi} \end{pmatrix} \ee^{\ii (\underline{\xi} \cdot X - \underline{\om} t)} +\cc, 
\qquad \underline{\xi}\in\R^d,\quad  \underline{\om}\in \R,\quad  \underline{\zeta}, \underline{\psi} \in \C
\end{equation}
(with $\cc$ denoting the complex conjugate of  the preceding term(s)),
provided %the polarization condition and 
the dispersion relation 
\begin{equation}\label{dispersionrelation}
\underline{\zeta} = \frac{\ii \underline{\om}}{1 + \bond|\underline{\xi}|^2} \underline{\psi}
\qquad\text{and}\qquad
\underline{\om}^2 = \om^2(\underline{\xi}),  
\end{equation}
%are 
is satisfied, with the dispersion function
\begin{equation} \label{dispersionfunction}
\om(\xi)= \sqrt{(1+\bond|\xi|^2) g_0 (\xi) },\qquad g_0 (\xi) = |\xi| \tanh (\sqrt{\mu} |\xi|),\qquad \xi\in\R^d.  
\end{equation}

\

In the case of linear systems one can construct more complicated solutions (wave packets) by superposition of plane waves via Fourier transformation. 
The analog to this in nonlinear systems is the consideration of modulated plane waves.  
In the most simple case of amplitude modulation we replace the constants $\underline{\zeta}, \underline{\psi}\in \C$  
in \eqref{constantplanewaves} by slowly varying functions
\begin{equation} \label{modulatedplanewave}
\begin{pmatrix} \underline{\zeta} (t', X') \\ \underline{\psi} (t',X') \end{pmatrix} \ee^{\ii (\underline{\xi} \cdot X - \underline{\om} t)} +\cc, 
\qquad \underline{\zeta}, \underline{\psi}:[0,\infty)\times\R^d \to \C,
\end{equation}
where $t' = \ep t$, $X'=\ep X$ with $0<\ep\le 1$ are new, macroscopic time- and space- variables. 
The question then is whether the nonlinear system
%, i.e.\ in our case \eqref{wwe}, 
allows
%, at least approximatively, 
approximatively 
for solutions of this form.
By this we mean solutions which \emph{maintain} the above form at least locally with respect to the macroscopic time $t'\le T$, or, equivalently, for $t\le T/\ep$. 
Typically, by inserting the two-scale ansatz \eqref{modulatedplanewave} into the nonlinear system, one obtains formally the necessary conditions, viz.\ the modulation equations, which the macroscopic functions $\underline{\zeta}, \underline{\psi}$ have to satisfy.
The modulation equations reveal some qualitative, macroscopic feature in the behavior of the nonlinear system under investigation, 
which depends of course strongly on the macroscopic scaling used for the modulation. 
This approach has been used widely in the physics literature
for all sorts of dispersive systems. 
Indeed, one of the oldest fields of application have been water waves, as is exemplified prominently in the work of Whitham \cite{Whitham}, 
to which we refer for a methodical exposition of the ideas behind modulation from the physical point of view.
%, as well as for the presentation of a variational method for the derivation of modulation equations. 

In nonlinear dispersive systems the central modulation equation is the nonlinear Schr\"odinger equation (nlS), since it captures the interplay between nonlinearity and dispersion governing the deformation of the envelopes of the wave packets 
(see  e.g.\ \cite{SulemSulem} for an overview).
For this, the right two-scale ansatz is not \eqref{modulatedplanewave} but rather
\begin{equation} \label{nlSscaling}
\begin{pmatrix} \underline{\zeta} (t'', X'') \\ \underline{\psi} (t'',X'') \end{pmatrix} \ee^{\ii (\underline{\xi} \cdot X - \underline{\om} t)} +\cc, 
\qquad \underline{\zeta}, \underline{\psi}:[0,\infty)\times\R^d \to \C,
\end{equation}
with $t'' = \ep t' $, $X''=X' - 
 \nabla\underline{\om}\, 
% c_g 
t'$, where 
% $c_g$
$\nabla\underline{\om}$
is the group velocity of the wave packet.  
Inserting this ansatz (with a corresponding polarization condition for $\underline{\zeta}$) into \eqref{wwe}, one obtains the nlS equation, 
which for two-dimensional gravity waves takes the form 
\begin{equation*}
\pl_t'' \underline{\psi} - \ii \frac12 \underline{\om}'' {\pl_x''}^2 \underline{\psi} + \ii c |\underline{\psi}|^2\underline{\psi} =0  
\end{equation*}
with $c\in\R$ depending on $\underline{\xi}, \underline{\om}$ and $\pl_t''$, $\pl_x''$ denoting differentiation with respect to $t''$, $x''$.
It was derived by Zakharov \cite{Zakharov68} for infinite depth 
%for two- and three-dimensions, 
% for capillary-gravity waves
and by Hasimoto and Ono \cite{HasimotoOno72} for finite depth.
% for gravity waves
In the three-dimensional case of finite depth instead of the nlS one obtains for the scaling \eqref{nlSscaling} 
the Davey-Stewartson (DS) system \cite{DaveyStewartson74} (see \cite{SulemSulem} for a detailed discussion of its properties). 
However, in infinite depth again the nlS is obtained as the modulation equation for the scaling \eqref{nlSscaling}. 
Concerning the justification of these modulation equations, this has been achieved 
%so far only 
for the two-dimensional gravity water-waves problem
by Totz and Wu \cite{TotzWu12} in the case of infinite depth and by D\"ull, Schneider, and Wayne \cite{DullSchneiderWayne12} in the case of finite depth. 
In the three-dimensional capillary-gravity case there exist consistency results for the nlS equation \cite{CraigSulemSulem92} 
and for the DS system \cite{CraigSchanzSulem97}, where consistency means that the amount by which the approximate solution fails to satisfy the original problem (i.e.\ the residual) tends to zero in the asymptotic limit with respect to some relevant norm. 

\

In the present article we use the hyperbolic scaling \eqref{modulatedplanewave} and consider three modulated plane waves of that form. 
We are interested in the modulation equations that govern the macroscopic dynamics of these waves 
not only in leading order but also with respect to their 
%macroscopic amplitudes 
% $\underline{\zeta}, \underline{\psi}$ 
% but also of their 
macroscopic corrections of order $O(\ep)$.
For the sake of clarity, we first present our exact assumptions, and discuss them afterwards.
%in the following. 

We make the two-scale ansatz for approximate solutions of \eqref{wwe}
\begin{equation}\label{Uapp}
U_a = \begin{pmatrix}  \zetaa \\ \psia \end{pmatrix} 
%= \sum_{m=0}^2 \ep^m \begin{pmatrix} \zeta_m \\ \psi_m \end{pmatrix} 
% FALLS WIR DAS FRUEHRE n AUF 2 FIXIEREN, KOENNEN WIR STATT m AUCH n BENUTZEN; ANSONSTEN BESSER WIE FOLGT:
= \begin{pmatrix} \zeta_0 \\ \psi_0 \end{pmatrix} +  \ep \begin{pmatrix} \zeta_1 \\ \psi_1 \end{pmatrix} + \ep^2 \begin{pmatrix} \zeta_2 \\ \psi_2 \end{pmatrix} 
%, \quad n=1,2,
\end{equation}
%where $0<\ep \le \ep_0$, $t\in[0,\infty)$, $X=(x,y)\in\R^2$ and
with
\begin{align*}
& \begin{pmatrix} \zeta_0 \\ \psi_0 \end{pmatrix} 
= \sum_j \begin{pmatrix} \zeta_{0j}  \\  \psi_{0j}  \end{pmatrix}  \ee_j 
+ \cc 
+ \begin{pmatrix} \zeta_{00} \\ \psi_{00} \end {pmatrix} ,
\\
& \begin{pmatrix} \zeta_1 \\ \psi_1 \end{pmatrix}  
= \sum_j  \begin{pmatrix} \zeta_{1j} \\ \psi_{1j} \end{pmatrix}  \ee_j
+ \sum_{ji}  \begin{pmatrix} \zeta_{1ji} \\ \psi_{1ji} \end{pmatrix}  \ee_{ji}
+ \cc 
+ \begin{pmatrix} \zeta_{10} \\ \psi_{10}  \end{pmatrix} , 
\\
& \begin{pmatrix} \zeta_2 \\ \psi_2 \end{pmatrix}  
= \sum_j  \begin{pmatrix} \zeta_{2j} \\ \psi_{2j} \end{pmatrix}   \ee_j
+ \sum_{ji}   \begin{pmatrix} \zeta_{2ji} \\ \psi_{2ji} \end{pmatrix}  \ee_{ji}
+ \sum_{jik}    \begin{pmatrix} \zeta_{2jik} \\ \psi_{2jik} \end{pmatrix} \ee_{jik}
+ \cc 
+  \begin{pmatrix} \zeta_{20} \\ \psi_{20}  \end{pmatrix},
\end{align*}
defined according to the following % assumptions and 
% notations: %\\
notations and assumptions: %\\

%\noindent {\bf Assumption 1.1.}\\[.5em]
\begin{notation} \label{approximationnotation}
%
%\noindent 
(1) All functions 
$\zeta_{\ldots}, \psi_{\ldots}$  on the right of $(\zeta_n, \psi_n)^T$, $n=0,1,2$, 
%above
are complex-va\-lued and depend only on the macroscopic time- and space-variables $0 \le t' = \ep t \le T$, $X' = \ep X = \ep (x,y) \in\R^d$
($d=1,2$ and $X=x\in\R$ if $d=1$), where $0<\ep\le 1$.
Differentiation with respect to $t'$ and $X'$ is denoted by $\pl_t'$ and $\nabla'$.
The abbreviation $\cc$ denotes the complex conjugate of all preceding terms.\\[.5em]
(2)
We introduce the index-sets 
\begin{equation*}
J = \{1,2,3\},
\end{equation*}
\begin{equation*} %\label{I1} 
I=\{  (1,1),  \  (2,2),  \ (3,3),  \ (1, \pm 2), \  (1, \pm 3), \ (2, \pm 3) \} \supset \{ (1,2), (1,3), (2,3)  \} = I_<,
\end{equation*}
\begin{align*} %\label{I2}
K =\{  & (1,1,1), \ (2,2,2), \ (3,3,3),  
\\ & (1,1, \pm 2),  \ (1,1, \pm 3), \ (2,2, \pm 3),  (2,2,\pm1), \  (3,3,\pm1),\  (3,3, \pm2), 
\\ & (1,2,3),   \ (1,2,-3), \ (1,3,-2), \ (2,3,-1) \}.
\end{align*}
We denote summation over these index-sets by 
\begin{align*}
\sum_j:= \sum_{j\in J}\ ,\qquad
\sum_{ji}:= \sum_{(j,i)\in I}\ ,\qquad 
\sum_{jik}:= \sum_{(j,i,k)\in K} \ .
\end{align*}\\[.5em]
(3a)
The functions $\ee_{\pm j}$ for $j\in J$, $\ee_{ji}$ for $(j,i)\in I$ and $\ee_{jik}$ for $(j,i,k)\in K$ are defined through
\begin{align*}%\label{ej} 
\ee_{\pm j} (t,X)= \ee^{\pm \ii (\xi_j\cdot X - \om_j t)}, \qquad  \ee_{ji} = \ee_j\ee_i,\qquad \ee_{jik} = \ee_j\ee_i\ee_k,
\end{align*}
where the wave-vectors $\xi_j\in\R^d\setminus\{0\}$ and the frequencies $\om_j=\om(\bk_j)>0$ satisfy for each $j\in J$ 
the dispersion relation $\om_j^2 = \om^2(\xi_j)$ with the dispersion function \eqref{dispersionfunction}.
%\begin{equation*}%\label{deffreq}
%% \om(\xi) = \left( (1+\bond|\xi|^2) g_0(\xi)\right)^{1/2}, 
%\om(\xi) = \sqrt{ \left(1+\bond|\xi|^2\right) g_0(\xi) }, 
%\qquad g_0(\xi) = |\xi| \tanh(\sqrt{\mu}|\xi|), \qquad \xi\in\R^d.
%\end{equation*}
%Moreover, 
%We assume that all $\ee_{\pm j}$, $j\in J$, are mutually different, i.e.\ 
%\begin{equation*}
%(\xi_j,\om_j) \neq \pm (\xi_i,\om_i)\quad\forall\ j,i\in J, \ j\neq i.
%\end{equation*}
We assume that the plane waves $\ee_{j}$, $j\in J$, are mutually different, i.e.\ 
\begin{equation*}
(\xi_j,\om_j) \neq (\xi_i,\om_i)\quad\forall\ j,i\in J, \ j\neq i.
\end{equation*}
Occasionally, we will refer to $\ee_j$, $\ee_{ji}$ and $\ee_{jik}$ as the
 %"first-", "second-" and "third-order harmonics",  
 first-, second- and third-order harmonics, respectively, and to $1=\ee^0$ as the zeroth-order harmonic.\\[.5em]    
(3b) In analogy to the index-notation for the harmonics, we use %also 
the abbreviations 
\begin{alignat*}{4}
\xi_{\pm j}  & = \pm \xi_j,\qquad  & \xi_{ji} & =  \xi_j+\xi_i, \qquad   & \xi_{jik} & =  \xi_j+\xi_i+\xi_k,
\\
\om_{\pm j} & = \pm\om_j, \qquad  & \om_{ji} & =  \omj+\omi,  \qquad   & \om_{jik} & = \omj+\omi+\omk,
\intertext{and}
%\\
b_j  & = 1 + \bond   |\bk_j|^2, \qquad & b_{ji}  & = 1 + \bond   |\bk_{ji}|^2, \qquad &  b_{jik}  & = 1 + \bond   |\bk_{jik}|^2,
\\
g_j &=  g_0(\bk_j), \qquad  &  g_{ji} &=  g_0(\bk_{ji}), \qquad &  g_{jik} & =  g_0(\bk_{jik}).
 \end{alignat*}
Finally, we denote 
\begin{align*} 
 g'_j =  \nabla g_0(\bk_j), \qquad  g'_{ji} & =  \nabla g_0(\bk_{ji}), \qquad {\mathrm H}_j =  {\ts \frac 12} \nabla' \cdot \mcH_{g_0} ( \bk_j) \nabla' , 
\end{align*}
where $\mcH_{g_0}(\xi)$ is the Hessian matrix of the function $g_0$ at $\xi\in\R^d$, see \eqref{dispersionfunction}.\\
(Note, in particular, $g_0(0,0) =0$, $\nabla g_0(0,0) = (0,0)$ and $\mcH_{g_0}(0,0) = 2\sqrt{\mu} I$.)  

In this notation the following identities hold true:
\begin{equation}  \label{nablaomega}
2\om_j\nabla\om_j = b_j g'_j+\bond 2\bk_j g_j, \quad \text{where}\quad \nabla\om_j = \nabla\om(\bk_j),
\end{equation} 
%where $\nabla\om_j = \nabla\om(\bk_j)$,
and
\begin{equation} \label{Hessiannew}
%%\frac{b_j}{2\om_j}\nabla' \cdot \mcH_{g_0}(\bk_j) \nabla'  \psi_{0j}
%\frac{b_j}{\om_j} \big( {\mathrm H}_j \psi_{0j} -  \nablabo\om_j \cdot \nabla'  (  \nabla\om_j   \cdot \nabla'  \psi_{0j} ) \big) 
%\\ 
%= \nabla' \cdot \mcH_\om(\bk_j) \nabla' \psi_{0j} -  \bond  \Big( 2 \bk_j \cdot \nabla' ( \nablabo\om_j  \cdot \nabla'  \psi_{0j} ) + \frac{\om_j}{b_j}\Delta'  \psi_{0j} \Big),
\nabla' \cdot \mcH_\om(\bk_j) \nabla' \psi_{0j}  = \frac{b_j}{\om_j}  {\mathrm H}_j \psi_{0j} 
-  \frac1{\om_j}  ( b_j \nablabo\om_j \cdot \nabla' )^2 \psi_{0j} + \bond \frac{\om_j}{b_j}\Delta'  \psi_{0j}, 
\end{equation}
%with $a\cdot\nabla(b\cdot\nabla f) = b\cdot\nabla(a\cdot\nabla f)$.
where
\begin{align}\label{nablabo}
\nablabo\om_j =\frac1{b_j}\Big( \nabla\om_j  - \bond 2\frac{  \om_j}{b_j}  \bk_j \Big).
\end{align}
(3c) The plane waves $\ee_j$, $j\in J$, satisfy the 
\emph{non-resonance 
% or closedness 
conditions}
\begin{equation*}
\om_{ji}^2 \neq  \om^2 (\xi_{ji})  = b_{ji} g_{ji} %=  \om^2 (\xi_{ji}) 
\quad \forall\ (j,i)\in I
\end{equation*}
and
\begin{equation*}
\om_{jik}^2 \neq  \om^2 (\xi_{jik})  = b_{jik} g_{jik} % =  \om^2 (\xi_{jik})
\quad \forall\ (j,i,k)\in K.
\end{equation*}
%or, equivalently, 
%%or, equivalently (cf.\ \eqref{dispersionfunction}),
%\begin{equation*}
%\om_{ji}^2 \neq  \om^2 (\xi_{ji}) 
%\quad \forall\ (j,i)\in I, 
%% \quad 
%\qquad
%\om_{jik}^2 \neq \om^2 (\xi_{jik})
%\quad \forall\ (j,i,k)\in K.
%\end{equation*}
%
(4) 
For $u\in H^s (\R^d)$, $s\in \R$, see \eqref{Hs}, of the form 
\begin{equation*}%\label{formu}
u(X) = \sum_{i=1}^k  \tilde u_i (X') \ee^{\ii \xi_i \cdot X}
\end{equation*} 
we use the notations
%\begin{equation*} % \label{formu}
$
|\tilde u|_{H^s} = \sum_{i=1}^k |\tilde u_i|_{H^s}
$
%\end{equation*}
and
\begin{equation*}  \label{notationprime}
u' (X) = \sum_{i=1}^k \ii \xi_i  \tilde u_i (\ep X) \ee^{\ii \xi_i \cdot X}, 
\qquad  
u'' (X) = \sum_{i=1}^k ( - |\xi_i|^2) \tilde u_i (\ep X) \ee^{\ii \xi_i \cdot X}, 
\end{equation*}
such that (with $\Delta' = \nabla'\cdot\nabla'$) 
\begin{equation} \label{spacediffexpansion}
\nabla u  =  u' + \ep \nabla' u 
\quad\text{and}\quad  
\Delta u = u'' + 2 \ep  \nabla' \cdot u' + \ep^2 \Delta' u.
\end{equation}
\end{notation}

\

The motivation for the special form of the ansatz \eqref{Uapp} is that we want to include in our formal expansion of the capillary-gravity water-waves equation \eqref{wwe} the case of quadratic interaction of two modulated plane waves. 
By quadratic interaction we mean the situation where two such waves generate by multiplication a third plane wave through the (quadratic) resonance of their phases, e.g.\ $\ee_1\ee_2 = \ee_3$. 
In this case one has to consider from the outset all three involved modulated plane waves in order to obtain a closed system of macroscopic equations, 
and the interaction is manifested macroscopically in leading order, 
which means that an expansion up to $O(\ep)$-terms in \eqref{Uapp} would be sufficient. 
% This motivates the ansatz \eqref{Uapp}.

However, as 
will be explained below, in the case of pure gravity waves, which is our main focus in the present paper, no such quadratic resonances arise. Such resonances exist only if surface tension is included in the original water-waves equation, see \cite{SchneiderWayne03} for the two-dimensional case ($d=1$).  
Then, naturally, the question arises, whether even in this quadratically non-resonant case any macroscopic coupling can be detected in the next-to-leading-order correction of the leading order amplitudes or in the non-oscillating mean field generated by the waves. 
Wanting to perform the (unsurprisingly, very cumbersome)
% Nevertheless, in order to perform the (typically very cumbersome) 
formal expansion of the water waves equation for three modulated pulses only once,
we chose the ansatz \eqref{Uapp}, which is usefull in both cases (quadratically resonant and non-resonant) and for waves with or without surface tension. 

As expected, indeed also in the quadratically non-resonant case, the interaction of modulated waves can be traced in the second-order macroscopic system. 
%
% This turns out to be indeed the case. 
%, see \eqref{mainsystem}.
More precisely, we obtain in Section \ref{SectionFormalDerivation} that in order for the approximation $U_a$ of \eqref{Uapp} to satisfy formally the water waves equation \eqref{wwe} up to residual terms of order $O(\ep^3)$, i.e.\
\begin{equation} \label{UaWweResiduals}
\partial_t U_a +\mcN_{\ep, \sigma} (U_a)  = \ep^3 (r_2^1, r_2^2)^T, 
\end{equation}
the macroscopic modulation equations
\begin{align} \tag{\ref{mainsystem}}
\begin{cases}
\pl_t' \psi_{0j}   +  \nabla\om_j \cdot   \nabla' \psi_{0j} = 0,
\\
{\pl_t'}^2  \psi_{00} -  \sqrt{\mu} \Delta' \psi_{00}   =  \sum_j  \Big(  (g_j^2 - |\xi_j|^2) \pl_t' + 2  \frac{\om_j}{b_j} \xi_j  \cdot  \nabla'  \Big)  |\psi_{0j}|^2,
\\
\pl_t'   \psi_{1j} + \nabla\om_j \cdot \nabla' \psi_{1j}   = E_j
\end{cases}
\end{align}
with $j\in J=\{1,2,3\}$ and
\begin{align} \tag{\ref{tildeEj}}
 E_j 
= &\  \ii \frac12   \nabla' \cdot \mcH_\om(\bk_j) \nabla' \psi_{0j} 
-  \ii  \psi_{0j} \Big( \frac{ b_j }{2\om_j}  (g_j^2 - |\xi_j|^2)   \pl_t' +  \bk_j  \cdot  \nabla'\Big) \psi_{00}
+ \tilde E_j
\end{align} 
have to be satisfied, where $\tilde E_j$ consists of cubic products of the leading-order amplitudes $\psi_{0j}$, 
see \eqref{truetildeEj}.
The other macroscopic functions appearing in $U_a$ can be determined via $\psi_{0j}, \psi_{00}, \psi_{1j}$ or can be chosen arbitrarily.

Of course, more generally one could consider also an arbitrary number of $N\in\N$, $N\ge 3$, % non-resonant 
modulated plane waves, as was done for other physical settings, e.g.\ in \cite{GiannoulisSparberMielke08,Giannoulis10}. 
In order to keep the presentation more simple and explicit, we chose, however, to consider here only three pulses.  
Note that the results derived in the present paper can be used in order to obtain the (non-resonant) macroscopic dynamics up to next-to-leading order for two pulses or even for a single pulse, by equating the macroscopic coefficients of the superfluous waves to zero, cf.\ 
for two pulses e.g.\ \cite{HammackHendersonSegur05} and for a single pulse 
\cite{BR} (see also Remark \ref{SectionFormalDerivation}.3) and \cite[\S 11.1]{SulemSulem}, \cite[\S 8.2.5]{Lannes}.

The ansatz \eqref{Uapp} consists of all first- and higher-order harmonic terms expected to arise up to order $\ep^2$ due to the nonlinear nature of the water waves problem, including the non-oscillating terms for the mean field which arise from the interaction of a plane wave with its complex conjugate. Note here, that the index sets $I$ and $K$ of Notation \ref{approximationnotation}(2) represent all possibly different second- and third-order harmonics. 
In this context, we point out that actually we are interested in the weakly nonlinear approximation of 
solutions to the water-waves problem \eqref{wwe} with $\ep=1$, 
% DAS VIELLEICHT AUCH DORT ERWAEHNEN
that is to say in an approximation of the form 
% $\ep U_{a,1}$, where $U_{a,1}$ consists of the first two terms of \eqref{Uapp}. 
$\ep U_a$ with $U_a$ as in \eqref{Uapp}.
%, where $U_{a,1}$ consists of the first two terms of \eqref{Uapp}. 
%
However, the expansion of the water waves equation in Section \ref{SectionExpansion} and the derivation of the modulation equations in Section \ref{SectionFormalDerivation} are performed for the equation in the form \eqref{wwe} and the ansatz $U_a$ in \eqref{Uapp}.

The reason why the ansatz $U_a$ includes also terms of order $\ep^2$, although we are interested only in an approximation $U_{a,1}$ of up to next-to-leading-order terms of order $\ep$ (i.e.\ consisting only of the first two terms of $U_a$), 
is that the determining equations for the functions $\zeta_{1j}, \psi_{1j}$ arise at order $O(\ep^2)$, cf.\ Corollary \ref{wwexpansion2}.
Moreover, for the justification of the approximation $U_{a,1}$ over time scales of order $O(1/\ep)$  we need first to consider an approximation that satisfies \eqref{wwe} formally up to residual terms of order $O(\ep^3)$, see Section \ref{SectionJustification} for the details, and \cite{KirrmannSchneiderMielke92} for a more general presentation of this approach.

%%% IRGENDWIE IST DAS FOLGENDE NICH BESONDERS KLAR FORMULIERT, UND ZEUGT VON MANGELNDEM VERSTAENDNISS
%Our aim is to derive modulation equations that describe the macroscopic dynamics of the first two terms 
%(we denote their sum by $U_{a,1}$) on the right hand side of $U_a$ in \eqref{Uapp}. 
%%
%Moreover, we want to show that the $U_{a,1}$ with the macroscopic coefficients determined by the derived modulation equations remains close to some solution of the water-waves problem \eqref{wwe} 
%%with $\ep=1$ 
%over time-scales of order $ t=O(1/\ep)$.
%%
%This requires in particular that the residual terms, i.e.\ the terms by which our approximation fails to satisfy \eqref{wwe}, 
%are formally of order $O(\ep^3)$, see Section \ref{SectionJustification}.
%%
%In order to achieve this we have to insert into \eqref{wwe} rather the \emph{improved approximation} \eqref{Uapp}, see Section \ref{SectionFormalDerivation} for the details, and \cite{KirrmannSchneiderMielke92} for the general principle. 
%% Eliminating throughout the factor $\ep$ in $\ep U_a$ leads to the form of $U_a$ in \eqref{Uapp}. 
%%

The two non-resonance conditions of Notation \ref{approximationnotation}(3c) imply that none of the higher-order harmonic terms are plane waves (or non-oscillating). 
The first set of conditions is essential for the form of the modulation equations, see \eqref{mainsystem}, yielding that in leading order the macroscopic amplitudes $\psi_{0j}$ are just transported with the group velocity $\nabla\om_j$ of the wave packet, without any macroscopic interaction.
In the opposite case, when quadratic resonances appear, one obtains the 'three-wave-interaction equations', a coupled system of three semilinear transport equations for $\psi_{0j}$ containing for each $j\in J$ quadratic products of the other two amplitudes, according to the existing resonances, see \cite{SchneiderWayne03}. 
As stated above, for pure gravity water waves this quadratic non-resonance condition is always satisfied. This is known since the 1960s, see \cite{Phillips60}, while the existence of quadratic resonances in the case of capillary-gravity waves of infinite depth was first proven in \cite{McGoldrick65}. For a more general discussion of resonances of water waves we refer to \cite{HammackHenderson93,SchneiderWayne03} and the references given therein. 
For the sake of completeness we give  in the following Remark \ref{SectionIntroduction}.1 a short analytical proof of the non-existence of quadratic resonances for gravity water waves of finite depth.

\

{\sc Remark \ref{SectionIntroduction}.1.}\ 
% Indeed, assume 
Assume there are $\xi_1,\xi_2,\xi_1+\xi_2\in\R^d\setminus\{0\}$, such that
\begin{equation*}% \label{rc}
(\om(\xi_1) \pm \om(\xi_2))^2 = \om^2(\bk_1+\bk_2)
\quad \text{with}\quad  
%\om(\xi)= \big( |\xi| \tanh( \sqrt{\mu} |\xi| ) \big)^{1/2}.
\om(\xi)= \sqrt{ |\xi| \tanh( \sqrt{\mu} |\xi| )}.
\end{equation*}
Since $\om$ is a radial function, taking square-roots and possibly considering the opposite of some wave-vector $\xi_j$ and relabeling, 
the equation on the left can always be written in the form 
\begin{equation*} %\tag{\ref{rc}$^\prime$} \label{rcprime}
\om(\bk_1)+ \om(\bk_2) = \om(\bk_1+\bk_2), \qquad\bk_1,\bk_2\in\R^d\setminus\{0\}.
\end{equation*}
Mutiplying % this equation 
by $\mu^{\frac14}>0$ and setting
\begin{equation*}
x= \sqrt{\mu} |\bk_1|>0,
\qquad  \lambda=\frac{|\bk_2|}{|\bk_1|}>0,
\qquad 
c %=\cos\vartheta 
= \frac{\bk_1\cdot\bk_2}{ |\bk_1| |\bk_2|} \in [-1,1],
\end{equation*}
solving this equation for $\bk_1,\bk_2\in\R^d\setminus\{0\}$ is equivalent to finding roots 
%to solve this equation for $\bk_1,\bk_2\in\R^d\setminus\{0\}$ is equivalent to finding roots 
$(x, \lambda, c)\in (0,\infty)\times (0,\infty)\times[-1,1]$ 
of 
 the function
\begin{equation*}
r_0(x,\lambda, c)= 
\sqrt{ (1+\lambda^2 + 2 c \lambda) \tanh ( x \sqrt{1+\lambda^2 + 2 c \lambda}) }
- \sqrt{\lambda  \tanh (x \lambda) }
- \sqrt{\tanh x}.
\end{equation*}
%for $(x, \lambda, c)\in (0,\infty)\times (0,\infty)\times[-1,1]$.
%
But $r_0(x,\lambda, c) \le  r_0(x,\lambda, 1)$ 
% for $c\in[-1,1]$ 
and  
\begin{align*}
r_0(x,\lambda,1) = \frac{g \left(x(1{+}\lambda)\right) - g(x\lambda)}{\sqrt{x}} 
- \sqrt{\tanh x},
\qquad \text{where}\quad g(y) = \sqrt{y \tanh y},
\end{align*}
with  $r_0(x,0,1)=0$ and 
\begin{align*}
\frac{d}{d\lambda} r_0 (x,\lambda, 1) = \sqrt{x} \left( g'\left(x(1{+}\lambda)\right) - g'(x\lambda) \right) <0 \quad \forall\  \lambda \ge 0,
\end{align*}
the latter due to the strict decreasing of 
\begin{align*}
g'(y) = \frac{\tanh y + y(1-\tanh^2 y) }{2 \sqrt{y\tanh y}}, \quad y\ge 0.
\end{align*}
Hence,  we conclude 
%by stating that 
%\begin{equation*}
 $
r_0(x,\lambda, c) < 0
$
 %\qquad \forall \   
 for all $
 (x, \lambda, c)\in (0,\infty)\times (0,\infty)\times[-1,1].
 $
%\end{equation*}
\hfill$\square$

\

While the first (quadratic) non-resonance condition of Notation \ref{approximationnotation}(3c) is essential for the form of the derived modulation equations, the second (cubic) non-resonance condition is much less so. Indeed, if the first non-resonance condition holds true,
cubic resonances do not change the form of the modulation equations, but merely contribute additional cubic products of the leading order amplitudes $\psi_{0j}$ to the source term of one of the equations for the next-to-leading order amplitudes $\psi_{1j}$, if a third-order harmonic is equal to one of the three considered plane waves. (Note, that in general if plane waves are generated through resonant interaction one always has to include the generated wave in the original ansatz, here \eqref{Uapp}, in order to obtain a closed system of modulation equations.) 
For the sake of simplicity we do not consider these cases explicitly here, but prefer to impose the cubic non-resonance condition of Notation \ref{approximationnotation}(3c) instead. 
Nevertheless, since the same justification result holds true for these modified modulation equations, the present paper covers completely the justification of the modulation equations up to next-to-leading order for three weakly amplitude-modulated gravity water waves, provided possibly existing cubic resonances generate only one of the three plane waves considered. 

\

We close this introduction by outlining the structure of the article and commenting on its main results.  
%
% WAS HIER STEHT, SOLLTE EVENTUELL NICHT MEHR BEIM BEGINN DER EINZELNEN SEKTIONEN STEHEN
% ZU JEDEM ABSCHNITT AUCH DIE BESONDEREN BEMERKUNGEN SCHREIBEN, Z.B. APPENDIX, BR, SECULAR GROWTH, JUSTIFICATION FOR THE FIRST ORDER EQUATIONS, CONTRIBUTIONS OF LANNES
%
In the following Section \ref{SectionExpansion}, after inserting the ansatz \eqref{Uapp} for the approximation $U_a$ into the water waves equation \eqref{wwe}, we expand with respect to the steepness parameter $0<\ep\ll 1$ up to residual terms of formal order $O(\ep^3)$, for which we give estimates in the Sobolev norms used for the justification of the modulation equations \eqref{mainsystem} in Section \ref{SectionJustification}. 
% LANNES ERWAEHNEN ? VIELLEICHT BESSER FUER DAS GANZE ZUSAMMEN
%
The precise formulas for the more involved, though structurally simple, macroscopic coefficients are given in an Appendix.
Thus, as a byproduct, we provide a complete formal explicit expansion including all terms of order $\ve^2$ for three amplitude modulated plane waves with the hyperbolic scaling $t' = \ep t$, $X' = \ep X$ for the capillary-gravity water waves problem of finite depth, 
that can be used independently, also for only one or two waves.

Then, in Section \ref{SectionFormalDerivation}, we derive %formally 
the necessary conditions on the macroscopic coefficients of $U_a$ in order for the latter to satisfy \eqref{wwe} up to the residual terms of order $O(\ep^3)$, see \eqref{UaWweResiduals}. 
In particular we obtain the modulation equations \eqref{mainsystem}. 
%
% \
%
% \emph{(tbc)}
%
Finally, in Section \ref{SectionJustification} we justify the derived modulation equations as a macroscopic limit to the gravity water waves problem 
\eqref{wwe} with $\ep=1$, viz.\
\begin{equation*}
\begin{cases}
\ds \partial_t \zeta - \mcG[\zeta]\psi = 0,  
\\
\ds \partial_t \psi + \zeta 
+ \frac12 |\nabla\psi|^2 - \frac{( \mcG[\zeta]\psi + \nabla\zeta\cdot \nabla \psi)^2}{ 2 (1 + |\nabla\zeta|^2)} = 0,
\end{cases}
% \qquad  U(0) = \ep U^0,
\end{equation*} 
over the macroscopic time $T/\ep$, that is to say, we show that the approximation $\ep U_{a,1}$, which consists of only the first two terms on the right hand side of \eqref{Uapp}, with the macroscopic functions given through the solutions of \eqref{mainsystem} up to the time $T>0$,
maintain a distance of order $\ep^{3-d/2}$ to the solution $U$ of the original problem over this time interval and with respect to a suitable Sobolev norm, 
% provided this is true for the initial time $t=0$ 
if their distance is of this order at the initial time $t=0$
(for the precise result, see Theorem \ref{JustificationNonresonantInteraction}).
Here, the reduction of the order, compared to the formal one, is due to the scaling of the macroscopic time and space variables.
Note, that the approximation $\ep U_{a,1}$ contains  terms of order $\ep$ and $\ep^2$, and hence the obtained result is clearly more valuable in the one-dimensional case $d=1$, 
% establishing the validity of 
fully justifying 
% the macroscopic correction to the leading-order transport equations, 
% which shows 
the macroscopic interaction in next-to-leading order for three 
% quadratically non-resonant 
weakly modulated 
gravity water waves. 
%
% For $d=2$, 
%
In order to improve the result in the case $d=2$, one could try to adapt methods used in nonlinear optics (see, e.g., \cite{JolyMetivierRauch98,ColinLannes09}, as pointed out in \cite[fn.\ 10, p.\ 232]{Lannes}), 
which is left open here for future consideration. 
%
%%% 
%%% ICH WUERDE ALS NACHFOLGEPAPERS ZUNAECHST EHER DEN CAPILLARY-GRAVITY CASE MIR VORNEHMEN, 
%%% UND DANN INFINITE DEPTH, SOWIE BR, UND ZULETZT DAS. 
%

As already mentioned, our justification result of Theorem \ref{JustificationNonresonantInteraction} is in principle an application of the
well-posedness 
% stability 
result of Alvarez-Samaniego and Lannes \cite{Lannes05, ASL08, Lannes} on gravity water waves of finite depth for times of order $O(1/\ep)$,
which is exactly the hyperbolic time-scale of the macroscopic limit considered in the present paper.
This result was extended to the case of two-fluid interfaces with surface tension in \cite{Lannes13},
which contains as a special case capillary-gravity water waves, see also \cite[Ch.\ 9]{Lannes}.
However, in this case the energy norm used includes also higher-order time derivatives. 
Thus, we chose to treat in the present paper only the justification of the macroscopic interaction equations \eqref{mainsystem} 
in the case without surface tension, postponing to future work the treatment of the capillary-gravity case, 
which anyway allows also for quadratic resonances, as explained above.
Note, that in the resonant, one-dimensional case $d=1$ of  finite depth, 
the leading-order macroscopic 'three-wave interaction equations' have been justified by Schneider and Wayne in \cite{SchneiderWayne03}, using Lagrangian coordinates. 
Moreover, it is expected that an analogous approach as the one presented here, can be performed also for the case of infinite depth.
%
% Concluding, we point out that the justification result obtained here, uses the stability of the original water-waves problem.  

Concluding, we would like to mention that the macroscopic limit \eqref{mainsystem} derived and justified in the present paper is an alternative to a three-wave generalization of the Benney-Roskes system \cite{BR}, see Remark \ref{SectionFormalDerivation}.3.
The latter is the relevant one with respect to the longer dispersive time-scale $t''=\ep t' = \ep^2 t$. 
However, on such a long time-scale there exist no well-posedness results up to now, neither for the original water-waves problem of finite depth nor for the Benney-Roskes system,
in contrast to the situation here, where both the original and the derived models are well-posed on the relevant hyperbolic time-scale 
$t'=\ep t$.

%%% maybe after the theorem: 
% comment on the specificities of theorem 4.2, e.g. that the norms are different  

% cubic resonances in theorem 4.2

\

\emph{Acknowledgement:} 
I thank Walter H.~Aschbacher for bringing this problem to my attention and for the stimulating discussions on water waves.

%%%%%%%%%%%%%%%%%%%%%%%%%%%%%%%%%%%%%%%%%%%%%%%%%%%%%%%%%%%%%
\section{Two-scale expansion and estimates for the residuals} \label{SectionExpansion}

The formal derivation of the modulation equations consists in plugging the ansatz \eqref{Uapp} 
% for an approximate solution 
into the original water waves problem \eqref{wwe}, expanding with respect to $\ep$, and equating all terms up to order $\ep^2$ to $0$.
In the present section we perform the first two steps by writing out the terms up to order $\ep^2$ and 
giving estimates with respect to the $H^s(\R^d)$-norm $|\cdot|_{H^s}$, see \eqref{Hs}, for the residual terms of formal order $O(\ep^3)$.
The third step, i.e.\ the actual derivation of the modulation equations, is performed in the next section.

%%  
%The outcome of this procedure are certain necessary conditions that this ansatz has to fullfill in order for the approximate solution to satisfy the original problem up to residual terms of order $\ep^3$, i.e., 
%\begin{equation}\label{formalconsistency}
%\partial_t U_a +\mcN_{\ep,\sigma} (U_a) = \ep^{n+1} r_n, \qquad r_n=(r_n^1,r_n^2)^T,   \qquad n=1,2.
%\end{equation}
%In the case where the approximate solution has the form of \eqref{Uapp} these necessary conditions take the form of modulation equations for the macroscopic coefficients of the plane wave solutions satisfying the linearization of the problem. 

% POSSIBLY TO REFORMULATE (DONE 19.9.15)

Concerning the time- and space-derivatives of the approximation $U_a = (\zetaa,\psia)^T$ in \eqref{Uapp},  
we obtain immediately their expansions with respect to $\ep$, viz.\
\begin{align}\label{expansiontimezeta} 
 \pl_t \zetaa
= & \sum_j  (- \ii \omj ) \zeta_{0j} \ee_j + \cc 
\displaybreak[0]\\&\notag 
+ \ep \Big( \sum_j   ( \pl_t'  \zeta_{0j} - \ii \omj  \zeta_{1j} )  \ee_j  + \sum_{ji}  (- \ii \om_{ji} )  \zeta_{1ji}  \ee_{ji} + \cc + \pl_t' \zeta_{00} \Big)
\displaybreak[0]\\ & \notag 
+ \ep^2 \Big( 
 \sum_j   ( \pl_t' \zeta_{1j} - \ii \omj  \zeta_{2j}  ) \ee_j
+  \sum_{ji}  ( \pl_t'   \zeta_{1ji} - \ii \om_{ji}    \zeta_{2ji} )  \ee_{ji} 
\\& \notag \hspace*{2.5em} + \sum_{jik}  (- \ii \om_{jik} )  \zeta_{2jik}  \ee_{jik} 
+ \cc + \pl_t'  \zeta_{10}  \Big)  
\displaybreak[0] \\& \notag 
+ \ep^3 \Big( \sum_j   \pl_t' \zeta_{2j}  \ee_j + \sum_{ji} \pl_t' \zeta_{2ji}  \ee_{ji}  + \sum_{jik} \pl_t'   \zeta_{2jik}  \ee_{jik}  + \cc +  \pl_t'  \zeta_{20}   \Big),
\end{align}
% and analogously for $\pl_t \psia$, 
where we recall that $\pl_t'$ denotes differentiation with respect to %the macroscopic time variable 
$t'=\ep t$, 
and
\begin{align*} 
 \nabla \zetaa
= &  \sum_j   \ii \bk_j \zeta_{0j} \ee_j + \cc 
\displaybreak[0]\\&
+ \ep \Big( \sum_j   ( \nabla' \zeta_{0j} + \ii \bk_j  \zeta_{1j} )  \ee_j + \sum_{ji}   \ii \bk_{ji}   \zeta_{1ji}  \ee_{ji} + \cc + \nabla' \zeta_{00} \Big)
\displaybreak[0]\\ &
+ \ep^2 \Big(  \sum_j   ( \nabla' \zeta_{1j} + \ii \bk_j  \zeta_{2j}  ) \ee_j  +  \sum_{ji}  ( \nabla'   \zeta_{1ji} +  \ii \bk_{ji}    \zeta_{2ji} )  \ee_{ji} 
\\& \hspace*{2.5em} + \sum_{jik}  \ii \bk_{jik}   \zeta_{2jik}  \ee_{jik} + \cc + \nabla' \zeta_{10}  \Big)  
\displaybreak[0]\\& 
+ \ep^3 \Big( \sum_j   \nabla' \zeta_{2j}  \ee_j + \sum_{ji} \nabla' \zeta_{2ji}  \ee_{ji} + \sum_{jik} \nabla' \zeta_{2jik}  \ee_{jik} + \cc +  \nabla' \zeta_{20}   \Big)  ,
\end{align*}
where $\nabla'$ denotes differentiation with respect to %the macroscopic space variable 
$X' = \ep X = \ep (x,y)$.
Analogous expansions hold true for $\pl_t\psia$ and $\nabla\psia$.

 \

In order to obtain an expansion in terms of $\ep$ for $\mcN_{\ep, \sigma}(U_a)$ in \eqref{wwe}, given through \eqref{mcNsigma1}, \eqref{mcNsigma2}, we obviously need first of all an expansion of  
% $\mcG[\ep\zetaa]\psi_a =  {\textstyle \frac1{\sqrt{\mu}}} \mcG_{\mu,1}\big[\textstyle{ \frac{\ep}{\sqrt{\mu}}}\zetaa,0\big] \psia$.
$\mcG[\ep\zetaa]\psi_a$, defined by \eqref{DNoperator}.
For this, we can rely on the expansions of the Dirichlet-Neumann operator 
given in \cite[Lemmata 8.11, 8.12]{Lannes}, which we adapt to the present situation.  
\begin{proposition} \label{LannesResiduals}
\begin{enumerate} 
\item \label{propGexpansion}
Let $\zeta,\psi$ be of the form in Notation \ref{approximationnotation}.(4) 
with $\tilde\zeta_i\in H^{s +1+ t_0}(\R^d)$, $\tilde \psi_i \in H^{s+1}(\R^d)$, 
where $s\ge 0$, $t_0>d/2$, 
% and $s\vee t_0 =\max\{s, t_0\}$, 
and
\begin{equation*}
1 - \ep |\zeta|_\infty \ge h_{\min} > 0, \qquad 0< \ep \le 1.
\end{equation*}
Then, for $\mcG[\ep\zeta]\psi$ given in \eqref{DNoperator} with $1\le \mu \le \mu_{\max}<\infty$, we have
\begin{align}\label{Gexpansion1}
\mcG[\ep \zeta] \psi  
& =  \mcG_0 \psi  + \sum_{m=1}^n  \ep^m \mcG_m[\zeta] \psi  + \ep^{n+1} \mcR_n[\zeta]\psi,  \qquad n=0,1,2,
%%% caution: $\ep^n = \ep^{n+1- d/2}$ for $d=2$
\end{align}
with 
\begin{align*}
\notag
\mcG_0 \psi 
& = \mcG[0]\psi =  |D| \tanh(\sqrt{\mu}|D|))\psi,
\displaybreak[0] \\
\mcG_1[\zeta] \psi 
& = - \mcG_0(\zeta \mcG_0 \psi) - \nabla\cdot (\zeta \nabla\psi),
\displaybreak[0] \\
\mcG_2[\zeta] \psi 
& =   \mcG_0 (\zeta \mcG_0 (\zeta\mcG_0\psi) ) + {\ts\frac12}\Delta(\zeta^2\mcG_0\psi) + {\ts\frac12} \mcG_0(\zeta^2\Delta\psi)
\end{align*}
and
\begin{align} \label{mcRn}
|\mcG[\ep\zeta] \psi  |_{H^s} \le \ep^{-d/2} M(s \vee t_0 +1, \tilde \zeta)  |\tilde \psi |_{H^{s+1}},
\end{align}
\begin{align} \label{mcRn_new}
|\mcR_n[\zeta]\psi|_{H^s}  \le \ep^{-d/2} M(s+1 + t_0, \tilde \zeta)  |\tilde \psi |_{H^{s+1}},
\end{align}
where $s\vee t_0 =\max\{s, t_0\}$ and 
\begin{equation*}
M(s, \tilde \zeta) =  C \big( h_{\min}^{-1} , \mu_{\max}, |\xi_i|,  |\tilde \zeta|_{H^s} \big)
\end{equation*}
is a nondecreasing function of each of its arguments.
%%% DAS MUSS MAN SICH NOCH GENAUER ANSCHAUEN !!!!!
%
%\begin{equation}
%|\mcR_n[\zeta]\psi|_{H^s} \le \ep^{-d/2} M(s+1, \tilde \zeta)  |\tilde \psi |_{H^{s+1}}, \qquad n=0,1,2.
%% HIER BRAUCHT MAN NOCH DIE ABHAENGIGKEIT VON \ep_{\max}
%\end{equation}
%
\item \label{propG0expansion}
Let $u$ be as in Notation \ref{approximationnotation}.(4) with $\tilde u_i \in H^{s+n+1}(\R^d)$ and $\mcG_0$ as above. 
Then, with the Notation \ref{approximationnotation}.(3b), we have
\begin{equation}  \label{G0expansion}
\mcG_0 u =    \sum_{m=0}^n \ep^m G_{m} u + \ep^{n+1} R_n u , \qquad  n=0,1,2, 
%%% caution: $\ep^n = \ep^{n+1- d/2}$ for $d=2$
\end{equation}
with
\begin{align*}
G_{0} u (X) & =   \sum_{i=1}^k   g_0(\bk_i ) \tilde u_i (X') \ee^{\ii \xi_i \cdot X},
\displaybreak[0]\\
G_{1} u (X) & = - \ii \sum_{i=1}^k  \nabla g_0(\bk_i)  \cdot \nabla' \tilde u_i(X') \ee^{\ii \xi_i \cdot X},
\displaybreak[0]\\
G_{2} u (X) & = - \frac 12 \sum_{i=1}^k    \nabla' \cdot \mcH_{g_0} ( \bk_i) \nabla'  \tilde u_i (X')  \ee^{\ii \xi_i \cdot X}
\end{align*}
%%% DIE XI_I HIER SIND NICHT UNBEDINGT DIE DREI WELLENVEKTOREN DER BETRACHTETEN EBENEN WELLEN, 
%%% UND KOENNEN DAHER NICHT WIE IN  Notation \ref{approximationnotation}.(3b) ABGEKUERZT WERDEN
and
\begin{equation*}
|R_n u|_{H^s} \le \ep^{-d/2} C(\mu_{\max}, |\xi_i| ) |\tilde u|_{H^{s+n+1}}.
\end{equation*}
\end{enumerate}
\end{proposition}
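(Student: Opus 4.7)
Part (1) is essentially a direct transcription of the Dirichlet--Neumann expansion of Alvarez-Samaniego--Lannes (Lemmata 8.11 and 8.12 in \cite{Lannes}) to our setting. The expansion \eqref{Gexpansion1} in powers of $\ep\zeta$, together with the explicit form of $\mcG_0,\mcG_1[\zeta],\mcG_2[\zeta]$, is obtained by repeatedly differentiating the boundary value problem \eqref{Laplace} with respect to the shape of the surface (the shape-derivative / Craig--Sulem--Sulem calculus): the formula for $\mcG_1[\zeta]\psi$ follows from the identities $\mcG_0(\zeta\mcG_0\psi) + \nabla\cdot(\zeta\nabla\psi) = -\mcG_1[\zeta]\psi$ by flattening the free surface via $z\mapsto z-\ep\zeta$, and $\mcG_2[\zeta]$ from the second derivative. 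The tame Sobolev estimates on $\mcG[\ep\zeta]\psi$ and on $\mcR_n[\zeta]\psi$ are also available in \cite{Lannes} without the $\ep^{-d/2}$ factor, but stated for general $\zeta,\psi\in H^{\ldots}(\R^d)$. The only genuinely new point is to translate these bounds to two-scale functions $\zeta=\sum_i \tilde\zeta_i(\ep X)\E^{\I\xi_i\cdot X}$: the change of variables $X\mapsto X'=\ep X$ produces
\begin{equation*}
|\tilde\zeta_i(\ep\cdot)|_{L^2(\R^d_X)} = \ep^{-d/2} |\tilde\zeta_i|_{L^2(\R^d_{X'})},
\end{equation*}
and applying $\nabla$ to such a term yields either $\I\xi_i$ (bounded) or a factor $\ep$ (which only improves the estimate), so that the tame $H^s$-estimates carry over with the single loss $\ep^{-d/2}$ shown in \eqref{mcRn}--\eqref{mcRn_new}.

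For part (2) the plan is to work directly in Fourier space. Since $\mcG_0 = g_0(D)$ is a Fourier multiplier with symbol $g_0(\xi)=|\xi|\tanh(\sqrt\mu|\xi|)$, and since $\tanh(s)/s$ extends smoothly through $s=0$, the symbol $g_0$ belongs to $C^\infty(\R^d)$ with all derivatives bounded on $\R^d$ (more precisely, $g_0$ is a classical symbol of order one with $\partial^\alpha g_0$ bounded for $|\alpha|\ge 1$). For one modulated plane wave $u(X)=\tilde u_i(\ep X)\E^{\I\xi_i\cdot X}$, Plancherel gives
\begin{equation*}
\mcG_0 u(X) = \E^{\I\xi_i\cdot X}\int_{\R^d}\E^{\I\eta\cdot X'} g_0(\xi_i+\ep\eta)\,\widehat{\tilde u_i}(\eta)\,\D\eta,
\qquad X'=\ep X.
\end{equation*}
Taylor-expanding the symbol $g_0(\xi_i+\ep\eta)$ around $\xi_i$ to order $n$ and identifying the monomial $\eta^\alpha$ with the differential operator $(-\I\nabla')^\alpha$ acting on $\tilde u_i(X')$ produces term-by-term the formulas for $G_0 u$, $G_1 u$, $G_2 u$ (the sign $-\I$ in $G_1$ and the sign of $G_2$ matching the statement after using $(-\I\nabla')\cdot\mcH_{g_0}(\xi_i)(-\I\nabla')= - \nabla'\cdot\mcH_{g_0}(\xi_i)\nabla'$).

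The main work lies in the remainder estimate. The integral form of Taylor's theorem gives
\begin{equation*}
R_n u(X) = \E^{\I\xi_i\cdot X}\,\frac{1}{n!}\int_0^1 (1-t)^n \,\mathrm{Op}\!\left(\tilde r_n(\xi_i,t\ep\eta)\eta^{n+1}\right) \tilde u_i(X')\,\D t,
\end{equation*}
where $\tilde r_n$ involves the $(n+1)$-th derivatives of $g_0$ evaluated at $\xi_i+t\ep\eta$. Since these derivatives are bounded uniformly on $\R^d$ (and depend continuously on $|\xi_i|\in(0,\infty)$ and on $\mu\in[1,\mu_{\max}]$), the inner operator is a Fourier multiplier bounded on $H^s(\R^d_{X'})$, with a loss of exactly $n+1$ derivatives (absorbed by the factor $\eta^{n+1}$). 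This produces $|R_n u|_{H^s_{X'}} \le C(\mu_{\max},|\xi_i|)\,|\tilde u_i|_{H^{s+n+1}_{X'}}$, and the rescaling $X'=\ep X$ gives the stated $\ep^{-d/2}$ loss when passing back to $H^s(\R^d_X)$. Summing over $i$ finishes the proof.

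\emph{Main obstacle.} Very little is genuinely hard here: part (1) is bookkeeping around Lannes' estimates, while part (2) is a standard symbolic Taylor expansion of a smooth Fourier multiplier on modulated plane waves. The only subtlety is tracking the factor $\ep^{-d/2}$ consistently through both parts, and, in part (2), verifying cleanly that the Taylor remainder $\tilde r_n(\xi_i,\ep\eta)\eta^{n+1}$ acts continuously from $H^{s+n+1}$ to $H^s$ uniformly in $\ep\in(0,1]$, which reduces to the boundedness of $\partial^\alpha g_0$ for $|\alpha|\ge 1$.
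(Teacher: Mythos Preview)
Your proposal is correct and follows essentially the same route as the paper: both obtain part~(1) by quoting the Dirichlet--Neumann expansion and tame estimates from \cite{Lannes} (the paper cites Theorem~3.15, Proposition~3.44 and Lemma~8.11 rather than Lemmata~8.11--8.12, but this is cosmetic) and then converting the bounds to two-scale functions via the scaling estimate $|u|_{H^s}\le C(|\xi_i|)\ep^{-d/2}|\tilde u|_{H^s}$, which is exactly your change-of-variables observation. For part~(2) the paper simply cites \cite[Lemma~8.12]{Lannes}, whose content is precisely the Fourier-side Taylor expansion of the symbol $g_0(\xi_i+\ep\eta)$ that you spell out; your explicit remainder analysis via boundedness of $\partial^\alpha g_0$ for $|\alpha|\ge 1$ is the same argument, just unpacked.
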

\begin{proof}
The estimate \eqref{mcRn} is obtained through the identification
\begin{equation}\label{identification}
\mcG[\ep\zeta]\psi =  {\textstyle \frac1{\sqrt{\mu}}} \mcG_{\mu,1}\big[\textstyle{ \frac{1}{ \sqrt{\mu} } } \ep \zeta,0\big] \psi, 
\qquad 1\le \mu \le \mu_{\max} <\infty, 
\end{equation}
% DAS MUSS MAN NATUERLICH GENAUER EINFUEHREN UND ERKLAEREN
from \cite[Theorem 3.15~(1)]{Lannes} and \eqref{Pmu_above}, in the form
\begin{align} \label{mcGestimate}
|\mcG[\ep\zeta] \psi  |_{H^s} \le C \left( h_{\min}^{-1} ,  \mu_{\max} , | \ep \zeta|_{H^{s\vee t_0 + 1}} \right)  |\nabla \psi |_{H^s},
\end{align}
%Moreover, it can be shown that the appearing norm on $\zeta$ can be replaced by the same norm on $\ep \zeta$, provided $(\sqrt{\mu}h_{\min})^{-1}$ remains bounded.
%%
%This is possible, since $\mcG$ is independent of the diffeomorphism $\Sigma: {\mathcal S} \to \Omega$ (see \cite[Lemma 3.1]{Lannes}),  by choosing the regularizing diffeomorhism 
%\begin{equation}\label{regularizing}
%\Sigma (X,z) = \left(X, z + {\ts \frac{\ep}{\sqrt{\mu}} } (z+1) \chi (\delta z |D| ) \zeta (X,z) \right) \quad\text{for}\quad (X,z)\in {\mathcal S},
%\end{equation} 
%where $\chi:\R\to\R$ is a positive, compactly supported, smooth, even function equal to $1$ in a neighborhood of the origin, and 
%\begin{equation*} 
%\delta = \min\left\{ 1 , \frac{\mu^{3/2} h_{\min}}{2 C(\chi) | \ep \zeta|_{H^{t_0+1}}} \right\},\quad C^2(\chi) = |\chi'|_\infty^2 \int_{\R^d} (1+|\xi|^2)^{-t_0} d\xi.
%\end{equation*}
%For the details of this procedure we refer to \cite[\S 2.2.2]{Lannes}. 
together with  the estimate 
\begin{equation} \label{ModulationEstimate}
% |a (\ep \cdot) \ee^{\ii \xi \cdot \cdot }|_{H^s} \le C (|\xi|,\ep) \ep^{-d/2} |a|_{H^s}\quad\text{for}\quad s\ge 0  
|u|_{H^s} \le C (|\xi_i|) \ep^{-d/2} |\tilde u|_{H^s}, \qquad s\ge 0, \qquad 0<\ep \le 1,   
\end{equation}
% leads to the first estimate in \eqref{mcRn}, since $\zeta$, $\psi$ are of the form in Notation \ref{approximationnotation}.(4) and $d=1,2$.
for functions as 
% of the form  
in Notation \ref{approximationnotation}.(4),  
% leads to the first estimate in \eqref{mcRn},
exploiting $d=1,2$.

Similarly, the expansion \eqref{Gexpansion1} 
(based on a Taylor-expansion of $\mcG[\ep \zeta]\psi$ around $\zeta=0$ in the direction $\zeta$ and on the analyticity of the Dirichlet-Neumann operator) 
and \eqref{mcRn_new} 
%  for $\zeta$, $\psi$ as in Notation \ref{approximationnotation}.(4)
follow by 
% the identification 
\eqref{identification} and \eqref{ModulationEstimate}
from \cite[Proposition 3.44 (for $k=1$)]{Lannes}, see also \cite[Remark 3.47 and Lemma 8.11]{Lannes}.
We require here a higher regularity of $\tilde\zeta$,
in line with \cite{CraigSulemSulem92, CraigSchanzSulem97} 
(see, in particular, \cite[Theorem 4.7]{CraigSchanzSulem97} for $s\in\N_0$ and $d=2$), in order to obtain \eqref{mcRn_new} in a more straightforward manner.
%
% DAS IST DER ERSTE PUNKT, DER DIE AN \zetaa GEFORDERTE REGULARITAET ERHOEHT (REGULARITY)
%
% However, these stronger assumptions do not change the regularity required in our justification result.
%
% I think they change the regularity. 
% 
%%% DES RAETSELS LOESUNG: MAN MUSS IN LANNES BEI DEN BEWEISEN DIE L\INFTY NORM NEHMEN

The second point follows from 
% The proof of \eqref{propG0expansion} follows from is a straightforward adaptation of 
\cite[Lemma 8.12]{Lannes}, 
see also \cite{CraigSulemSulem92, CraigSchanzSulem97}.
% DIESEN PUNKT HABE ICH AUCH NICHT HUNDERTPROZENTIG GECHECKT (REGULARITY)
\end{proof}

 % VIELLEICHT NUETZLICH
%\begin{align*} 
%(\zeta,\psi) = (\zetaa,\psia) = ( \zeta_0+ \ep \zeta_1+ \ep^2 \zeta_2,\psi_0+ \ep \psi_1+ \ep^2 \psi_2)
%\end{align*}

We use the previous proposition in order to expand $\mcN_{\ep,\sigma} (U_a)$ with respect to $\ep$, 
writing out explicitly the terms of orders up to $\ep^2$ 
and providing $H^s$-norm estimates for the residual terms of formal order 
$O(\ep^3)$. 
\begin{corollary} \label{epexpansion}
For $\mcN_{\ep,\sigma}(U_a)$ of \eqref{mcNsigma1},  \eqref{mcNsigma2},  
%  with $U_a$ as in 
\eqref{Uapp}, 
the notations of Notation \ref{approximationnotation} and Proposition \ref{LannesResiduals} with $t_0=3/2$, $s\ge 1$, 
and $\mfP$ as in \eqref{mfP}, 
we have
\begin{align*} %\label{Gexpansion5}
\mcG[\ep \zetaa]\psia = & \, G_0 \psi_0 
\displaybreak[0] \\&\, 
+ \ep \big( G_1 \psi_0 + G_0 \psi_1  - G_0(\zeta_0 G_0 \psi_0)  - \zeta_0'  \cdot \psi_0'  -  \zeta_0  \psi_0''  \big) 
\displaybreak[0] \\ &\, 
+ \ep^2 \big( G_2 \psi_0  +  G_1 \psi_1  + G_0 \psi_2  - G_1 (\zeta_0 G_0 \psi_0)  - G_0(\zeta_0 G_1 \psi_0) 
\\& \phantom{ \, + \ep^2 \big( \ \,  } 
-  G_0(\zeta_1G_0 \psi_0)   - G_0 (\zeta_0 G_0 \psi_1)   +  G_0 ( \zeta_0  G_0 ( \zeta_0  G_0 \psi_0) )   
\\&\phantom{\, + \ep^2 \big( \  \, } 
- \zeta_0'  \cdot \nabla'\psi_0  -  \nabla' \zeta_0 \cdot  \psi_0'  -  2 \zeta_0 \nabla' \cdot \psi_0' 
 \\&\phantom{ \, + \ep^2 \big( \  \, } 
 -  \zeta_1'  \cdot \psi_0'  -  \zeta_1  \psi_0''   -  \zeta_0'  \cdot \psi_1'  -  \zeta_0  \psi_1''   +  {\ts\frac12} (  \zeta_0^2  G_0 \psi_0)''   +  {\ts\frac12} G_0 ( \zeta_0^2 \psi_0'') \big)
\displaybreak[0]\\&\, 
+ \ep^3 R_2^1
\end{align*}
with 
%\begin{equation*}
%R_2^1 = P^a_2 + R^a_2 + \mcR^a_2.
%\end{equation*}
%for which we obtain
 \begin{align} \label{estimateR21}
 |R_2^1|_{H^s} \le   \ep^{-d/2} M(s+5/2, \tilde\zetaa) \big( |\tilde\psi_0|_{H^{s+3}}+| \tilde \psi_1 |_{H^{s+2}} + | \tilde \psi_2 |_{H^{s+1}}   \big)
 \end{align}
and
\begin{align*} % \label{expansionNsigma2} 
- \mcN_{\ep, \sigma}^2 (U_a) 
 = &\, - \zeta_0 + \bond  \zeta_0'' 
\displaybreak[0] \\&\,
+ \ep \big( -  \zeta_1 -  {\ts\frac12} |\psi_0' |^2 +  {\ts \frac12} (G_0\psi_0)^2  +  \bond  ( 2 \nabla'\cdot \zeta_0' +  \zeta_1'' ) \big) 
\displaybreak[0] \\ &\,
+ \ep^2 \big( - \zeta_2  -  \ \psi_0' \cdot (\nabla'\psi_0 + \psi_1' ) 
\\ &\phantom{\, + \ep^2 \big( \  \, }  
+  \big( G_1\psi_0  + G_0 \psi_1 - G_0(\zeta_0 G_0 \psi_0) 
% - \zeta_0'  \cdot \psi_0' 
 -  \zeta_0  \psi_0''  
% +  \zeta_0'\cdot \psi_0'  
\big) G_0\psi_0 
\\ &\phantom{\, + \ep^2 \big(  \ \, } 
+  \bond \big( \Delta'\zeta_0 + 2 \nabla'\cdot \zeta_1'   +  \zeta_2'' - { \ts \frac12 } ( |\zeta_0'|^2 \zeta_0')' \big)
 \big)
\displaybreak[0] \\ &\,
+ \ep^3 R_2^2 
\end{align*}
with
%\begin{equation*}
%R_2^2 = - {\ts \frac12}  \rho^a_1 +  { \ts \frac12 } r^a_1  +  \bond   s^a_2. 
%\end{equation*}
%and hence
\begin{equation}\label{estimateR22}
% | \nabla R_2^2 |_{H^s} 
| \mfP R_2^2 |_{H^s} 
\le \ep^{-d/2}  M(s+3,\tilde\zetaa) \left( C \big( | \tilde \psi_0|_{H^{s+7/2}},  | \tilde \psi_1|_{H^{s+5/2}},  |\tilde \psi_2|_{H^{s+3/2}} \big) +  \bond \right). 
%%% DIE ZUSAETZLICHE ABLEITUNG KOMMT DADURCH ZUSTANDE, DASS MAN BEI r_a^1 DIE RESIDUEN-TERME, 
%%% DIE NICHT DIE STRUKTURIERTE FORM HABEN, "AUFMACHT", D.H. UM EINS ENTWICKELT. 
%%% DAS KANN MAN WAHRSCHEINLICH BESSER MACHEN 
\end{equation}
\end{corollary}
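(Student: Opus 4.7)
The plan is to substitute the ansatz $U_a=(\zetaa,\psia)^T$ from \eqref{Uapp} into both components of $\mcN_{\ep,\sigma}(U_a)$, expand carefully in powers of $\ep$ up to and including order $\ep^2$, and collect everything of formal order $\ep^3$ or higher into the residuals $R_2^1$ and $R_2^2$. The two principal analytic inputs are Proposition \ref{LannesResiduals}, which provides both the expansion of $\mcG[\ep\zeta]\psi$ around $\zeta=0$ and the local pseudo-differential expansion of $\mcG_0$ on modulated plane waves, together with the modulation estimate \eqref{ModulationEstimate} to convert macroscopic norms on $\tilde\zeta_i,\tilde\psi_i$ into norms on the corresponding functions of $X$. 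The derivatives $\nabla$ are split throughout using \eqref{spacediffexpansion} as $u'+\ep\nabla' u$, so that each fast gradient produces precisely one additional power of $\ep$ through the slow correction.

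For the first component I would apply Proposition \ref{LannesResiduals}(\ref{propGexpansion}) with $n=2$, $\zeta=\zetaa$, $\psi=\psia$, obtaining $\mcG[\ep\zetaa]\psia=\mcG_0\psia+\ep\mcG_1[\zetaa]\psia+\ep^2\mcG_2[\zetaa]\psia+\ep^3\mcR_2[\zetaa]\psia$. Since $\mcG_1[\cdot]\cdot$ is bilinear and $\mcG_2[\cdot]\cdot$ is cubic in its arguments, substituting $\zetaa=\zeta_0+\ep\zeta_1+\ep^2\zeta_2$ and $\psia=\psi_0+\ep\psi_1+\ep^2\psi_2$ and collecting by powers of $\ep$ produces exactly the listed $O(1)$, $O(\ep)$ and $O(\ep^2)$ terms, after the further step of expanding every inner occurrence of $\mcG_0$ on fast-oscillating data via Proposition \ref{LannesResiduals}(\ref{propG0expansion}) and splitting every $\nabla$ via \eqref{spacediffexpansion}. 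The residual $\ep^3 R_2^1$ then gathers three kinds of contributions: the genuine Dirichlet--Neumann remainder $\ep^3\mcR_2[\zetaa]\psia$, bounded by \eqref{mcRn_new}; the $\ep^3$-remainders from the inner $\mcG_0$-expansions, bounded by the $R_n$-estimate in Proposition \ref{LannesResiduals}(\ref{propG0expansion}); and all $\ep^j$-terms with $j\ge 3$ from the multilinear expansion, which are products handled by Sobolev multiplication (valid in $H^s$ for $s\ge 1\ge d/2$ since $d\le 2$), the $\mcG[\ep\zeta]\psi$-bound \eqref{mcRn}, and \eqref{ModulationEstimate}. The worst $\ep$-scaling is the single factor $\ep^{-d/2}$ from \eqref{ModulationEstimate}, and the highest Sobolev indices that appear are $s+3$ on $\tilde\psi_0$, $s+2$ on $\tilde\psi_1$, $s+1$ on $\tilde\psi_2$, and $s+5/2$ on $\tilde\zetaa$, which reproduces \eqref{estimateR21}.

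For the second component the strategy is analogous, but I would first Taylor-expand the algebraic nonlinearities in \eqref{mcNsigma2}, writing $(1+\ep^2|\nabla\zetaa|^2)^{-1/2}=1-\tfrac12\ep^2|\nabla\zetaa|^2+O(\ep^4)$ in the capillarity term and $(1+\ep^2|\nabla\zetaa|^2)^{-1}=1-\ep^2|\nabla\zetaa|^2+O(\ep^4)$ in the kinetic-type quotient, then expanding $|\nabla\psia|^2$ and $(\mcG[\ep\zetaa]\psia+\ep\nabla\zetaa\cdot\nabla\psia)^2$ as polynomials in $\ep$ by inserting \eqref{spacediffexpansion} and the expansion of $\mcG[\ep\zetaa]\psia$ already obtained. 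Collecting produces the displayed formula for $-\mcN_{\ep,\sigma}^2(U_a)$ up to order $\ep^2$. The main obstacle is the residual estimate \eqref{estimateR22}: compared with \eqref{estimateR21}, two additional issues arise. First, the $\ep^3$-remainder of the surface-tension term $\bond\nabla\cdot(\nabla\zetaa/\sqrt{1+\ep^2|\nabla\zetaa|^2})$ carries a loss of two spatial derivatives on $\zetaa$ together with a prefactor $\bond$; this is precisely the reason for composing $R_2^2$ with the operator $\mfP$ from \eqref{mfP}, which is designed to absorb the capillary derivative loss and yields the linear $\bond$-contribution on the right-hand side of \eqref{estimateR22}. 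Second, the $\ep^3$-remainder coming from the Taylor expansion of $(1+\ep^2|\nabla\zetaa|^2)^{-1}$ multiplied by the square of $\mcG[\ep\zetaa]\psia+\ep\nabla\zetaa\cdot\nabla\psia$ is nonlinear in $\tilde\psi_0,\tilde\psi_1,\tilde\psi_2$; this is encoded by the nondecreasing function $C(\cdot)$ in \eqref{estimateR22}. With these points in place, term-by-term use of \eqref{mcRn}, \eqref{mcRn_new}, \eqref{ModulationEstimate}, standard tame Sobolev product estimates, and the monotonicity of $M(\cdot,\tilde\zetaa)$ delivers \eqref{estimateR22} and completes the proof.
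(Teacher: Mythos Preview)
Your outline follows the paper's proof closely: first expand $\mcG[\ep\zetaa]\psia$ via Proposition~\ref{LannesResiduals}(\ref{propGexpansion}), then replace each inner $\mcG_0$ by its pseudodifferential expansion from Proposition~\ref{LannesResiduals}(\ref{propG0expansion}), then split every $\nabla$ and $\Delta$ using \eqref{spacediffexpansion}; the paper writes the residual explicitly as $R_2^1 = P^a_2 + R^a_2 + \mcR^a_2$ corresponding precisely to your three groups of contributions, and the estimates are obtained with the product rules of Lemma~\ref{ProductEstimates} and \eqref{ModulationEstimate}. The treatment of $\mcN_{\ep,\sigma}^2$ is likewise the same in structure.

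One point to correct: your explanation of why $\mfP$ appears in \eqref{estimateR22} is not right. The operator $\mfP = |D|/(1+\sqrt{\mu}|D|)^{1/2}$ behaves like $|D|^{1/2}$ at high frequency and therefore \emph{costs} half a derivative rather than absorbing any; indeed the paper uses the bound \eqref{Pmu_above}, $|\mfP u|_{H^s}\le |u|_{H^{s+1/2}}$, and simply estimates $R_2^2$ in $H^{s+1/2}$ (this is the reason for the remark at the end of the proof about estimating $|r_1^a|_{H^{s+1/2}}$ and the resulting extra order of regularity on $\tilde\psia$). The presence of $\mfP$ in the statement is dictated downstream by the energy norm \eqref{energyN} used in the stability Theorem~\ref{JustificationTheorem}, not by any capillary smoothing. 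Your mechanical outline still goes through once you replace that motivation with the straightforward use of \eqref{Pmu_above}.
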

%%%%%%%%%%%%%%%%%%%%%%%%%%%%%%%%%%%%%%%%%%%%%%%%%%%%%%%%%%%%%%%%%%%
\begin{proof}
We start with the expansion of $\mcG[\ep \zetaa]\psia$.  
First, we use \eqref{Gexpansion1} and obtain
\begin{align*}% \label{Gexpansion3}
\mcG[\ep \zetaa]\psia = 
& \,  \mcG_0 \psi_0  + \ep \big(  \mcG_0 \psi_1 - \mcG_0(\zeta_0 \mcG_0 \psi_0)  - \nabla\cdot (\zeta_0 \nabla\psi_0) \big)
\displaybreak[0] \\&\,  + \ep^2 \big(  \mcG_0 \psi_2 -  \mcG_0(\zeta_1 \mcG_0\psi_0)  - \mcG_0(\zeta_0 \mcG_0 \psi_1)  -  \nabla\cdot (\zeta_1 \nabla\psi_0 + \zeta_0 \nabla\psi_1)  
\\ &\, \phantom{ +\ep^2 \big( \ \,}   
+   \mcG_0 ( \zeta_0  \mcG_0( \zeta_0  \mcG_0\psi_0) ) +  {\ts\frac12}\Delta(  \zeta_0^2  \mcG_0\psi_0) +  {\ts\frac12} \mcG_0( \zeta_0^2  \Delta\psi_0)  \big)
\displaybreak[0] \\&\, 
+\ep^3 \mcR^a_2
\end{align*}
with
\begin{align*}
\mcR^a_2 = &\,  \mcR_0[\zetaa]\psi_2 + \mcR_1[\zetaa]\psi_1  + \mcR_2[\zetaa]\psi_0 +\mcG_1[\zeta_1+\ep\zeta_2]\psi_1 +\mcG_1 [\zeta_2] \psi_0
\displaybreak[0]\\ & \,
 + \mcG_0 ((\zeta_1+\ep\zeta_2) \mcG_0 (\zeta_0\mcG_0\psi_0) )  + \mcG_0 (\zetaa \mcG_0 ((\zeta_1+\ep\zeta_2)\mcG_0\psi_0) ) 
\displaybreak[0]\\& \,
+ {\ts\frac12}\Delta( (\zeta_1+\ep\zeta_2) ( \zeta_0  + \zetaa)   \mcG_0\psi_0) + {\ts\frac12} \mcG_0( (\zeta_1+\ep\zeta_2) ( \zeta_0  + \zetaa)   \Delta\psi_0 ).
 \end{align*}
%%%%%%%%%%%%%%%%%%%%%%%%%%%%%%%%%%%%%%%% %%%%%
% and hence 
% \begin{align*}
% |\mcR^a_2|_{H^s} \le   \ep^{-d/2} M(s+2, \tilde\zetaa) \big( |\tilde\psi_0|_{H^{s+3}}+| \tilde \psi_1 |_{H^{s+2}} + | \tilde \psi_2 |_{H^{s+1}}   \big)
% \end{align*}
% where we used ...
 %%%%%%%%%%%%%%%%%%%%%%%%%%%%%%%%%%%%%%%%%%%%%
 Then, we expand $\mcG_0$ 
 % appearing in $\mcG[\ep \zetaa]\psia$ 
 according to \eqref{G0expansion}, getting
 %. Thus, we get
\begin{align*} %\label{Gexpansion4}
\mcG[\ep \zetaa]\psia = &\,  G_0 \psi_0 + \ep \big( G_1 \psi_0 + G_0 \psi_1 - G_0(\zeta_0 G_0 \psi_0) - \nabla\cdot (\zeta_0 \nabla\psi_0) \big) 
\displaybreak[0] \\ \notag &\, 
+ \ep^2 \big( G_2 \psi_0  + G_1 \psi_1  + G_0 \psi_2  - G_1 (\zeta_0 G_0 \psi_0)  -  G_0(\zeta_0 G_1 \psi_0)  
\\ \notag &\, \phantom{ +\ep^2 \big( \ \,}  
- G_0(\zeta_1G_0 \psi_0)  - G_0 (\zeta_0 G_0 \psi_1) +  G_0 ( \zeta_0  G_0 ( \zeta_0  G_0 \psi_0) )  
\\ \notag & \,  \phantom{ +\ep^2 \big( \ \,}  
- \nabla\cdot (\zeta_1 \nabla\psi_0+ \zeta_0 \nabla\psi_1)  +  {\ts\frac12}\Delta(  \zeta_0^2  G_0 \psi_0)  +  {\ts\frac12} G_0 ( \zeta_0^2  \Delta\psi_0) \big)
\displaybreak[0] \\ \notag &\, 
+ \ep^3 R^a_2 
+ \ep^3  \mcR^a_2
\end{align*}
with
\begin{align*}
R^a_2 = &\,  R_2 \psi_0   +  R_1 \psi_1  +  R_0 \psi_2  
% \\&\  
- R_1 (\zeta_0 G_0 \psi_0)  -  R_0 (\zeta_0 G_1 \psi_0)  -  \mcG_0 (\zeta_0 R_1 \psi_0)
\displaybreak[0]\\&\,   
-  R_0 (\zeta_1G_0 \psi_0) -  \mcG_0 (\zeta_1 R_0 \psi_0) 
- R_0 (\zeta_0 G_0 \psi_1)  -  \mcG_0 (\zeta_0 R_0 \psi_1) 
\displaybreak[0]\\&\,
  +  R_0 ( \zeta_0  G_0 ( \zeta_0  G_0 \psi_0) )  +  \mcG_0( \zeta_0  R_0 ( \zeta_0  G_0 \psi_0) )   +  \mcG_0 ( \zeta_0  \mcG_0 ( \zeta_0 R_0 \psi_0) )  
\\& \,
  + {\ts\frac12}\Delta(  \zeta_0^2  R_0 \psi_0) + {\ts\frac12} R_0 (\zeta_0^2  \Delta\psi_0).
\end{align*}
%%%%%%%%%%%%%%%%%%%%%%%%%%%%%%%%%%%%%%%%%%%%%%%%%%%%%%%%%%%%%%%%
%and hence 
%\begin{align*}
%\ep^{d/2} |R^a_2|_{H^s} &\  \le   C(\mu_{\max}, |\xi_j|, |\xi_{ji}|, |\xi_{jik}|) \big( |\tilde \psi_0|_{H^{s+3}} +  |\tilde \psi_1|_{H^{s+2}} + |\tilde \psi_2|_{H^{s+1}} \big)  
% \\&\  
%+ M(s+2, \tilde\zeta_0)  \big(  |\tilde\psi_0|_{H^{s+3}} +  |\tilde\psi_1|_{H^{s+2}} \big) + M(s+1, \tilde\zeta_1)   |\tilde\psi_0|_{H^{s+2}} 
%\end{align*}
%%%%%%%%%%%%%%%%%%%%%%%%%%%%%%%%%%%%%%%%%%%%%%%%%%%%%%%%%%%%%%%%
Finally, we expand the $\nabla$- and $\Delta$-operators 
%(differentiation with respect to $X$) 
according to \eqref{spacediffexpansion}, obtaining
\begin{align*}
& - \ep  \nabla 
 \cdot (\zeta_0 \nabla\psi_0) 
 + \ep^2 \big( - \nabla\cdot (\zeta_1 \nabla\psi_0+ \zeta_0 \nabla\psi_1)  + {\ts\frac12}\Delta(  \zeta_0^2  G_0 \psi_0) + {\ts\frac12} G_0 ( \zeta_0^2  \Delta\psi_0) \big)
\displaybreak[0] \\ \ & =
 - \ep ( \zeta_0'  \cdot \psi_0'  +  \zeta_0  \psi_0''  )
 - \ep^2 ( \zeta_0'  \cdot \nabla'\psi_0  +  \nabla' \zeta_0 \cdot  \psi_0'  +  2 \zeta_0 \nabla' \cdot \psi_0' ) 
\displaybreak[0] \\ & \ \phantom{=\ }
+ \ep^2  \big( - \zeta_1'  \cdot \psi_0'  -  \zeta_1  \psi_0''   -  \zeta_0'  \cdot \psi_1'  -  \zeta_0  \psi_1''  + {\ts\frac12} (  \zeta_0^2  G_0 \psi_0)''   +  {\ts\frac12} G_0 ( \zeta_0^2 \psi_0'') \big)
+ \ep^3 P^a_2 
\end{align*}
with
\begin{align*}
P^a_2 = &  -  (\nabla' \zeta_0 \cdot \nabla'\psi_0 +  \zeta_0 \Delta' \psi_0 )
\displaybreak[0] \\ &  
-  \big( \nabla' \zeta_1 \cdot \nabla \psi_0   +  \zeta_1'  \cdot \nabla'\psi_0  +    \zeta_1 ( 2 \nabla' \cdot \psi_0' +  \ep  \Delta' \psi_0 ) \big) 
\displaybreak[0]\\ &   
-  \big( \nabla' \zeta_0 \cdot  \nabla \psi_1  +  \zeta_0'  \cdot \nabla'\psi_1  +    \zeta_0 ( 2 \nabla' \cdot \psi_1' +  \ep  \Delta' \psi_1) \big) 
\displaybreak[0]\\&    
+ \nabla' \cdot (  \zeta_0^2  G_0 \psi_0)'  +  \ep {\ts\frac12}\Delta'(  \zeta_0^2  G_0 \psi_0) 
+  G_0 ( \zeta_0^2 \nabla' \cdot \psi_0'  ) +  \ep {\ts\frac12} G_0 ( \zeta_0^2  \Delta'  \psi_0 ).
\end{align*}
%%%%%%%%%%%%%%%%%%%%%%%%%%%%%%%%%%%%%%%%%%
%and hence
%\begin{align*}
%\ep^{d/2} | P^a_2 |_{H^s} & \le  
%M(s+2, \tilde\zeta_0)   \big(  |\tilde\psi_0|_{H^{s+2}} +  |\tilde\psi_1|_{H^{s+2}} \big)  + M(s+1, \tilde\zeta_1)   |\tilde\psi_0|_{H^{s+2}} 
%\end{align*}
%%%%%%%%%%%%%%%%%%%%%%%%%%%%%%%%%%%%%%%%%

Altogether, we obtain $\mcG[\ep \zetaa]\psia$ as in the statement of the corollary, with  
\begin{equation*}
R_2^1 = P^a_2 + R^a_2 + \mcR^a_2,
\end{equation*}
for which we obtain the estimate \eqref{estimateR21}, by 
using the estimates (and expansions) of Proposition \ref{LannesResiduals}, 
the product estimates of Lemma \ref{ProductEstimates} below, and 
\eqref{ModulationEstimate}. 

\

%Although in the present paper we focus on second-order approximations, if one is interested only in first-order ones, one can use the expansion.
Analogously, the (shorter) first-order approximation of $\mcG[\ep \zetaa]\psia$ reads
\begin{align*} % \label{mcGmuep2resid}
\mcG[\ep \zetaa]\psia = 
%& \  \mcG_0 \psi_0  + \ep \big(  \mcG_0 \psi_1 - \mcG_0(\zeta_0 \mcG_0 \psi_0)  - \nabla\cdot (\zeta_0 \nabla\psi_0) \big)
%+\ep^2 \mcR^a_1
%& \  G_0 \psi_0  + \ep \big( G_1\psi_0  + G_0 \psi_1 - G_0(\zeta_0 G_0 \psi_0)  - \nabla\cdot (\zeta_0 \nabla\psi_0) \big)
%+ \ep^2 R^a_1 + \ep^2 \mcR^a_1
& \, G_0 \psi_0  + \ep \big( G_1\psi_0  + G_0 \psi_1 - G_0(\zeta_0 G_0 \psi_0)  - \zeta_0'  \cdot \psi_0'  -  \zeta_0  \psi_0''   \big)
% \\ \notag &\,
+ \ep^2 R^1_1
\end{align*}
with
\begin{align}\label{R11}
R^1_1 = 
&\, \mcG[\ep \zetaa]\psi_2 + \mcR_0[\zetaa]\psi_1 + \mcR_1[\zetaa]\psi_0 + \mcG_1[\zeta_1+\ep\zeta_2]\psi_0 
\displaybreak[0] \\ \notag &\,  
+  R_1 \psi_0   +  R_0 \psi_1  - R_0 (\zeta_0 G_0 \psi_0)  -  \mcG_0 (\zeta_0 R_0 \psi_0) 
\displaybreak[0] \\ \notag &\, 
 -  ( \zeta_0'  \cdot \nabla'\psi_0  +  \nabla' \zeta_0 \cdot \nabla \psi_0  +  2 \zeta_0 \nabla' \cdot \psi_0' + \ep  \zeta_0 \Delta' \psi_0 ),
  \end{align}
from which we obtain as above
\begin{equation}\label{estimateR11}
|R^1_1|_{H^s} \le  \ep^{-d/2} M(s+5/2, \tilde\zetaa)  \big(   |\tilde\psi_0|_{H^{s+2}} +| \tilde \psi_1 |_{H^{s+1}} + | \tilde \psi_2 |_{H^{s+1}}  \big);
\end{equation}
while the zeroth-order approximation of $\mcG[\ep \zetaa]\psia$ is given by
\begin{equation*} %\label{mcGmuepresid}
\mcG[\ep \zetaa]\psia = G_0 \psi_0  + \ep R^1_0
\end{equation*}
with
\begin{equation*}
R^1_0 =  \mcG[\ep \zetaa](\psi_1 + \ep \psi_2)  + \mcR_0[\zetaa]\psi_0 +R_0\psi_0,
 \end{equation*}
which leads to
\begin{equation}\label{estimateR01}
|R^1_0|_{H^s}  \le   \ep^{-d/2}  M(s+5/2, \tilde\zetaa)  |\tilde\psia|_{H^{s+1}}. 
\end{equation}

\

We turn now to the expansion of $\mcN_{\ep,\sigma}^2(U_a)$. First, we have
\begin{equation*}
|\nabla\psia|^2 = |\psi_0' |^2 +  \ep 2  \psi_0' \cdot (\nabla'\psi_0 + \psi_1' )+  \ep^2 \rho^a_1
\end{equation*}
with
\begin{align*}
&\rho^a_1 =  2  \psi_0' \cdot ( \nabla' \psi_1+  \nabla \psi_2 ) +  | \nabla'\psi_0 + \nabla(\psi_1+\ep\psi_2)  |^2.
%\\
%&\rho^a_1 = (\nabla' \psi_0 + \psi_1') \cdot (\nabla'\psi_0 + \nabla (\psi_1 + \ep \psi_2) ) + (\nabla'\psi_1 + \nabla\psi_2)\cdot(\nabla\psia + \psi_0')
\end{align*}
%and hence
%\begin{align*}
% & 
%| \nabla \rho^a_1 |_{H^s} \le   \ep^{- d/2} C( |\tilde \psia|_{H^{s+2}} )
%\end{align*}
%
%and, analogously, 
%\begin{equation*}
%|\nabla\psia|^2 = |\psi_0' |^2 + \ep \rho^a_0
%\end{equation*}
%with
%\begin{align*}
%%&\rho^a_0 = 2 \psi_0' \cdot (\nabla' \psi_0 + \nabla(\psi_1+\ep\psi_2) ) +   \ep |\nabla' \psi_0 + \nabla(\psi_1+\ep\psi_2)|^2  
%&\rho^a_0 = (  \psi_0' + \nabla \psia ) \cdot (\nabla' \psi_0 + \nabla(\psi_1+\ep\psi_2) ) 
%\end{align*}
%
Then, 
% we use 
using the first- and zeroth-order expansions of  $\mcG[\ep \zetaa]\psia$ 
% presented 
above,
% (in order to write the residual terms in a shorter form) and 
we
obtain 
%with \eqref{mcGmuepresid}, \eqref{mcGmuep2resid} 
%\begin{equation*}
%\mcG[\ep \zetaa]\psia = \gamma_0 + \ep R_0^1= \gamma_0 +\ep \gamma_1 +\ep^2 R_1^1 
%\end{equation*}
%
%\begin{align*}
%& \frac{(\mcG_\mu^a\psia + \ep \nabla\zetaa\cdot \nabla \psia)^2}{1+\ep^2 |\nabla\zetaa|^2} 
%=  \frac{\gamma_0^2 + \ep 2 \gamma_0 ( \gamma_1+  \zeta_0'\cdot \psi_0'  )  + \ep^2 r_4 }{1+\ep^2 |\nabla\zetaa|^2} 
%\end{align*}
%with 
%\begin{align*}
%r_4 =  & \ 2 \gamma_0 R_1^1 + (R_0^1)^2 + 2 R_0^1 \nabla\zetaa\cdot \nabla \psia  + (\nabla\zetaa\cdot \nabla \psia)^2
%\\& \ 
%+ 2 \gamma_0  ( \zeta_0'\cdot (\nabla'\psi_0 +\nabla(\psi_1+\ep\psi_2)) + (\nabla'\zeta_0 + \nabla(\zeta_1+\ep\zeta_2))\cdot \nabla\psia )   
%\end{align*}
\begin{multline*}
{ \frac{(\mcG[\ep \zetaa]\psia + \ep \nabla\zetaa\cdot \nabla \psia)^2}{1+\ep^2 |\nabla\zetaa|^2} }
 \\
 = %&\,  
 (G_0\psi_0)^2 
% \\&\, 
+ 2 \ep \big( G_1\psi_0  + G_0 \psi_1 - G_0(\zeta_0 G_0 \psi_0) 
% - \zeta_0'  \cdot \psi_0'  
-  \zeta_0  \psi_0''   
% +  \zeta_0'\cdot \psi_0'  
\big)  G_0\psi_0  
% \\&\, 
+ \ep^2 r^a_1
\end{multline*}
with
\begin{align*}
r^a_1 = &\,  2 \big[  R_1^1  + \zeta_0'\cdot (\nabla'\psi_0 +\nabla(\psi_1+\ep\psi_2)) + (\nabla'\zeta_0 + \nabla(\zeta_1+\ep\zeta_2))\cdot \nabla\psia \big] G_0\psi_0   
\displaybreak[0] \\&\,
 + (R_0^1+\nabla\zetaa\cdot \nabla \psia)^2 -  (\mcG[\ep \zetaa]\psia + \ep \nabla\zetaa\cdot \nabla \psia)^2 \frac{  |\nabla\zetaa|^2  }{1+\ep^2 |\nabla\zetaa|^2}.
\end{align*}
Finally,  we get
\begin{align*}
\nabla\cdot \Big( \frac{\nabla\zetaa}{\sqrt{1+ \ep^2 |\nabla\zetaa|^2}} \Big) 
= &\,  \zeta_0'' + \ep (2 \nabla'\cdot\zeta_0'  + \zeta_1'' ) 
\displaybreak[0] \\&\, + \ep^2 \big(  \Delta' \zeta_0  + 2 \nabla'\cdot\zeta_1' +  \zeta_2'' -  {\ts \frac12}   ( |\zeta_0'|^2  \zeta_0')'  \big)
% CAUTION:  ( |\zeta_0'|^2  \zeta_0')'   \neq  |\zeta_0'|^2  \zeta_0'' (since the norm is the Euclidean one in \R^d)
+ \ep^3 s^a_2 
\end{align*}
with 
\begin{align*}
s^a_2=&\,  \Delta' \zeta_1  + \nabla'\cdot \zeta_2'  + \nabla\cdot\nabla' \zeta_2 -  \frac12 \nabla' \cdot ( |\zeta_0'|^2  \zeta_0') 
\displaybreak[0] \\&\,
-  \frac12 \nabla\cdot \big[  \big(  ( \nabla'\zeta_0 + \nabla(\zeta_1+\ep\zeta_2) ) \cdot (\zeta_0' + \nabla\zetaa) \big)   \zeta_0'  +  |\nabla\zetaa|^2 ( \nabla' \zeta_0 +  \nabla(\zeta_1+\ep \zeta_2) )   \big]  
\displaybreak[0] \\&\,
+\ep \nabla\cdot \Big( \frac{ ( 4  + r^2  (1 + r ) ) |\nabla\zetaa|^4  \nabla\zetaa} { 2 r  (1 + r ) (2 + r  ( 1 + r^2) )} \Big) , \quad \text{where}\quad  r = \sqrt{1 + \ep^2 |\nabla\zetaa|^2}.
\end{align*}
%and hence
%\begin{align*}
%| \nabla s^a_2 |_{H^s} \le   \ep^{- d/2} M(s+3, \tilde \zetaa)
%%  C(|\tilde\zetaa|_{H^{s+3}}) 
%% +  \ep \left|  \frac{ ( 4  + r^2  (1 + r ) ) |\nabla\zetaa|^4  \nabla\zetaa}{2 r  (1 + r ) (2 + r  ( 1 + r^2) )}\right|_{H^{s+2}} 
%%+  \ep |  F(\nabla\zetaa )  \nabla\zetaa|_{H^{s+2}} 
%% +  \ep C( |\nabla\zetaa|_\infty )  | \nabla\zetaa |_{H^{s+2}}    \ep^{- d/2} |  \tilde \zetaa |_{H^{(s+2)\vee t_0 +1}} 
%%+  \ep C( |\tilde \zetaa|_{H^{t_0+1}}) \, \ep^{-d/2}   | \tilde\zetaa|_{H^{s+3}}    \, \ep^{- d/2} | \tilde \zetaa|_{H^{(s+3)\vee ( t_0 +1 )}} 
%\end{align*}
%with %(see (2.2) in ASL, Large time... 2008)
%$$
%F(x) = \frac{  ( 4  + (1+\ep^2 |x|^2 )  (1 + \sqrt{1+ \ep^2 |x^2|} ) ) |x|^4 }{2 \sqrt{1+\ep^2 |x|^2}  (1 + \sqrt{1+\ep^2 |x|^2} ) (2 + \sqrt{1+\ep^2 |x|^2}  ( 2+ \ep^2 |x|^2) )}, \quad x\in\R^d,
%$$
%where we used ... 
%and, analogously, 
%\begin{align*}
%\nabla\cdot \Big( \frac{\nabla\zetaa}{\sqrt{1+\ep^2 |\nabla\zetaa|^2}} \Big)
%= 
%%\Delta \zeta_0 + \ep  \Delta \zeta_1  
%\zeta_0'' + \ep( 2  \nabla'\cdot\zeta_0'  + \zeta_1'' )  
%+ \ep^2 s^a_1 
%\end{align*}
%with
%\begin{align*}
%s^a_1= &\ \Delta' \zeta_0 + \nabla'\cdot\zeta_1'  + \nabla\cdot \nabla'\zeta_1 + \Delta \zeta_2
%% \\&\ 
%% -  \nabla\cdot \Big( \frac{ |\nabla\zetaa|^2 \nabla\zetaa}{\sqrt{1+\ep^2 |\nabla\zetaa|^2}\big(1+ \sqrt{1+\ep^2 |\nabla\zetaa|^2}\big)} \Big)
%-  \nabla\cdot \Big( \frac{ |\nabla\zetaa|^2 \nabla\zetaa}{r (1+ r )} \Big), \quad r = \sqrt{1 + \ep^2 |\nabla\zetaa|^2}. 
%\end{align*}

% Summarily, 
Summarizing, we get the expansion of $\mcN_{\ep,\sigma}^2(U_a)$ presented in the statement, with
\begin{equation*}
R_2^2 = - {\ts \frac12}  \rho^a_1 +  { \ts \frac12 } r^a_1  +  \bond   s^a_2. 
\end{equation*}
Using 
%once 
again Proposition \ref{LannesResiduals}, Lemma \ref{ProductEstimates} below, \eqref{ModulationEstimate}, 
and the estimates \eqref{estimateR11}, \eqref{estimateR01} with \eqref{Pmu_above}, 
% and the explicit forms of $R_1^1$ and $R_0^1$, 
we obtain for $s\ge 1$
% in a similar way as before
the estimate \eqref{estimateR22}.
Note, that we estimated $|r_1^a|_{H^{s+1/2}}$, by expanding $R_0^1$, $R_1^1$, $\mcG[\ep \zetaa]\psia$ 
up to residual terms of order $O(\ep)$, in order to control the appearing products with respect to $\ep$. 
%
%%% FALLS GEFRAGT, SAGEN DAS WIR DEN ESTIMATE (2.8) ODER SO (SPECIAL FORM FUNCTIONS) NUR EINMAL 
%%% VERWENDEN KOENNEN 
%
This leads to an increase by one order of the regularity required for $\tilde \psia$. 
%
%%% DAS KANN MAN EVENTUELL VERMEIDEN, SIEHE DIE MULTISKALEN ABSCHAETZUNGEN FUER FOURIER-MULTIPLIER 
%%% VON CSS97  (REGULARITY)

\

For future use, we present also the first-order expansion of $\mcN_{\ep,\sigma}^2(U_a)$  
\begin{align*}
- \mcN_{\ep, \sigma}^2 (U_a) 
 = & - \zeta_0 + \bond  \zeta_0'' 
+ \ep \big( -  \zeta_1 -  {\ts\frac12} |\psi_0' |^2 +  {\ts \frac12} (G_0\psi_0)^2  +  \bond  ( 2 \nabla'\cdot \zeta_0' +  \zeta_1'' ) \big) 
\displaybreak[0] \\ &
+ \ep^2 R_1^2 
\end{align*}
with
\begin{equation}\label{R12}
R_1^2 = -\zeta_2 - {\ts \frac12}  \rho^a_0 +  { \ts \frac12 } r^a_0  +  \bond   s^a_1, 
\end{equation}
where
\begin{align*}
\rho^a_0 = &\, (  \psi_0' + \nabla \psia ) \cdot (\nabla' \psi_0 + \nabla(\psi_1+\ep\psi_2) ),
\displaybreak[0] \\
r^a_0 = &\,  (R_0^1+ \nabla\zetaa\cdot \nabla \psia) ( G_0\psi_0+  \mcG[\ep \zetaa]\psia + \ep \nabla\zetaa\cdot \nabla \psia)
\\ &\,  -  \ep (\mcG[\ep \zetaa]\psia + \ep \nabla\zetaa\cdot \nabla \psia)^2 \frac{  |\nabla\zetaa|^2  }{1+\ep^2 |\nabla\zetaa|^2},
\displaybreak[0] \\
s^a_1= &\,  \Delta' \zeta_0 + \nabla'\cdot\zeta_1'  + \nabla\cdot \nabla'\zeta_1 + \Delta \zeta_2
-  \nabla\cdot \Big( \frac{ |\nabla\zetaa|^2 \nabla\zetaa}{r (1+ r )} \Big), \quad r = \sqrt{1 + \ep^2 |\nabla\zetaa|^2}, 
\end{align*}
which leads to
\begin{equation}\label{estimateR12}
| \mfP R_1^2 |_{H^s} \le \ep^{-d/2} M(s+3 ,\tilde\zetaa)  
\left(C \big( | \tilde \psi_0|_{H^{s+5/2}},  |\tilde \psi_1 + \ep \tilde \psi_2|_{H^{s+3/2}} \big) +  \bond \right) .
\end{equation}
% for $s\ge 1$, $t_0 = 3/2$
%
%%% DIE ZUSAETZLICHE ABLEITUNG KOMMT DADURCH ZUSTANDE, DASS MAN BEI r_a^1 DIE RESIDUEN-TERME, 
%%% DIE NICHT DIE STRUKTURIERTE FORM HABEN, "AUFMACHT", D.H. UM EINS ENTWICKELT. 
%%% DAS KANN MAN WAHRSCHEINLICH BESSER MACHEN 
%
%with % (see \cite[(2.2)]{ASL08}) 
%\begin{equation*}
%G(x) = \frac{|x|^2}{\sqrt{1+\ep^2|x|^2} (1+\sqrt{1+\ep^2|x|^2})}, \quad x\in\R^d,
%\end{equation*}
\end{proof}

\

In the following lemma we list the product estimates for functions in $H^s(\R^d)$ that we used in order to obtain the estimates 
%\eqref{estimateR21}, \eqref{estimateR22}, \eqref{estimateR11}, \eqref{estimateR01}, \eqref{estimateR12}. 
\eqref{estimateR21} -- \eqref{estimateR12}.
For their proof we refer to \cite[Appendix B.1.1]{Lannes} and \cite[(2.2)]{ASL08}, and the references given therein. Recall also that  $H^{t_0} (\R^d) \subset L^\infty (\R^d)$  for $t_0>d/2$. 
We use the notation
\begin{equation*}
A_s + \langle B_s \rangle_{s>r} = \begin{cases} A_s & \text{if $s\le r$,} \\ A_s + B_s  & \text{if $s>r$.}\end{cases} 
%\qquad ($s,r\in\R$)
\end{equation*}
\begin{lemma} \label{ProductEstimates}
Let $t_0>d/2$. Then, for $f,g\in H^s(\R^d)$, $s\in\R$,  the following estimates hold true:
\begin{enumerate}
\item if $s_1,s_2\in \R$, $s_1,s_2\ge s$, $ 0 \vee (s + t_0)  \le s_1 + s_2 $, then $|fg|_{H^s}\le C |f|_{H^{s_1}}|g|_{H^{s_2}}$;
\item if $s\ge 0$, then  $|fg|_{H^s}\le C ( |f|_\infty  |g|_{H^s}+ |f|_{H^s} |g|_\infty)$;
\item if $s\ge 0$ and $F\in C^\infty(\R^n;\R^m)$, $F(0)=0$, then $|F(u)|_{H^s} \le C(|u|_\infty)|u|_{H^s}$;
\item if $s\ge - t_0$ and $1+g(X)\ge c_0>0\ \forall\ X\in\R^d$, then $$\left|\frac{f}{1+g}\right|_{H^s}\le C({\ts \frac1{c_0} }, |g|_{H^{t_0}} ) (|f|_{H^s} + \langle |f|_{H^{t_0}} |g|_{H^s} \rangle_{s>t_0}).$$  
\end{enumerate}
\end{lemma}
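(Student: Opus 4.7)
The plan is to establish the four estimates by reducing everything to the paradifferential / Littlewood--Paley machinery. Estimate (1) is the foundational bilinear bound, and from it the three others follow: (2) is a tame refinement, (3) is a Moser composition consequence, and (4) is a composition-plus-product application.

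For (1) I would use Bony's paraproduct decomposition $fg = T_f g + T_g f + R(f,g)$. The low-high paraproducts $T_f g$, $T_g f$ are bounded in $H^s$ by $|f|_\infty |g|_{H^s}$ and $|g|_\infty|f|_{H^s}$, respectively; invoking the Sobolev embedding $H^{t_0}\hookrightarrow L^\infty$ on the low-frequency factor, these contribute $C|f|_{H^{s_1}}|g|_{H^{s_2}}$ whenever $s\le s_1$ (resp.\ $s\le s_2$). The remainder $R(f,g)$, a sum of near-diagonal frequency interactions, is controlled in $H^s$ via Cauchy--Schwarz as soon as $s_1+s_2 - s \ge t_0$, which is exactly the hypothesis $0\vee(s+t_0)\le s_1+s_2$. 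Equivalently, one may work directly on the Fourier side, writing $\widehat{fg}=\hat f \ast \hat g$ and using the Peetre-type inequality $\langle\xi\rangle^s \le C(\langle\xi-\eta\rangle^s + \langle\eta\rangle^s)$ for $s\ge 0$; splitting the convolution into $\{|\eta|\le|\xi-\eta|\}$ and its complement, and applying Cauchy--Schwarz, yields the stated inequality. Negative $s$ is recovered by duality against $H^{-s}$, which swaps the roles of $s$ and the appropriate $s_i$.

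Estimates (2) and (3) follow from standard Littlewood--Paley / Moser arguments. For (2) the paraproduct bound gives $|T_f g|_{H^s}\le C|f|_\infty|g|_{H^s}$ directly, without passing through $H^{t_0}$, and symmetrically for $T_g f$; the remainder is summable in $H^s$ for $s\ge 0$ after redistributing derivatives. For (3), I would expand $\partial^\alpha F(u)$ via the chain rule for integer $s\in\N$; each resulting term is a product of derivatives $\partial^{\beta_i}u$ multiplied by $F^{(k)}(u)$, the latter pointwise bounded by $C(|u|_\infty)$ thanks to $F\in C^\infty$. Gagliardo--Nirenberg interpolation then controls each such product in $L^2$ by $C(|u|_\infty)|u|_{H^s}$, with the hypothesis $F(0)=0$ used to absorb a factor of $u$ in the $\alpha = 0$ term. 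Non-integer $s$ is handled by interpolation between consecutive integer exponents.

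For (4) one writes $\frac{1}{1+g} = 1 + F(g)$ with $F(y) = -y/(1+y)$; on $\{y \ge -1 + c_0\}$ this $F$ is smooth with $F(0)=0$, so (3) yields $|F(g)|_{H^s}\le C(1/c_0,|g|_{H^{t_0}})|g|_{H^s}$ for $s\ge 0$, after controlling $|g|_\infty$ by $|g|_{H^{t_0}}$. Decomposing $f/(1+g) = f + f\cdot F(g)$ and applying (1) or (2) to the second factor produces the tame form, the bracket $\langle|f|_{H^{t_0}}|g|_{H^s}\rangle_{s>t_0}$ arising exactly when (1) demands the $H^{t_0}$-norm of the low-regularity factor. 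The range $-t_0 \le s < 0$ is reached by duality against $\varphi\in H^{-s}$, reducing to the already-established $s\ge 0$ case applied to the trilinear pairing. The main obstacle is the borderline index balance $s_1+s_2 = s+t_0$ in (1) together with the careful bookkeeping in (4) of when the $H^{t_0}$-bracket term is actually needed; both require going slightly beyond a naive Cauchy--Schwarz to avoid a logarithmic loss.
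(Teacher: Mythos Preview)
Your sketch is correct and follows the standard route for these classical estimates: Bony's paraproduct decomposition for (1), the Kato--Ponce tame form for (2), Moser-type composition via chain rule and Gagliardo--Nirenberg for (3), and the rewriting $1/(1+g)=1+F(g)$ with $F(y)=-y/(1+y)$ combined with (1) and (3) for (4). The bookkeeping you flag for the bracket term in (4) and the duality extension to negative $s$ are exactly the points where care is needed, and you handle them appropriately.

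The paper, however, does not actually prove this lemma: it simply states the estimates and refers to \cite[Appendix B.1.1]{Lannes} and \cite[(2.2)]{ASL08} for the proofs. So there is no ``paper's own proof'' to compare against beyond those citations. Your outline is essentially what one finds in those references (Lannes's appendix in particular develops precisely the paraproduct and Moser machinery you invoke), so in that sense your approach coincides with the intended one. The only remark is that you have supplied substantially more detail than the paper does; nothing in your argument is wrong or missing.
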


\

% REFORMULIEREN, WENIGER POETISCH (DONE 19.9.15)

We can now insert into the expansions of $\mcG[\ep\zetaa]\psia$ and $\mcN_{\ep, \sigma}^2(U_a)$ 
with respect to 
% in 
$\ep$ (Corollary \ref{epexpansion})
the decompositions of the $(\zeta_n, \psi_n)^T$-terms 
%, $n=0,1,2$, 
into their harmonics, see after \eqref{Uapp},
and expand at each order of $\ep$ with respect to the different harmonics.
Due to the nonlinearity of the terms of the $\ep$-expansions and the fact that the inserted terms already contain higher-order harmonics, which moreover result from three different carrier waves, this leads to very involved formulas. 
The following proposition gives the relevant structure of these expansions containing the information needed for the derivation and characterization of the modulation equations, 
and we refer to the Appendix for exact formulas of the more involved expressions.
%
%while an algorithm for the determination of the exact form of the more involved terms is deferred to the Appendix.   
%
%%%%%%%%%%%%%%%%%%%%%%%%%%%%%%%%%%%%%%%%%%%%%%%%%%%%%%%%%%%%%%%%%%%%
\begin{proposition} \label{finalexpansion}
With Notation \ref{approximationnotation} and $(\zetaa,\psia)^T$ as in \eqref{Uapp}, 
%and the residuals $R_2^1$, $R_2^2$ as estimated in Corollary \ref{epexpansion}, the expansions there
%of the latter 
% for $\mcG[\ep\zetaa]\psia = -\mcN_\sigma^1(U_2^a)$ and $\mcN_\sigma^2(U_2^a)$ 
%take the following form:
the expansions of Corollary \ref{epexpansion}
take the form
% AUCH DIE INDEX UND SUMMATION NOTATION CLARIFIEN NACHDEM MAN DIE AM ANFANG ODER SONSTWO KONZIS AUFGESCHRIEBEN HAT
\begin{align*}% \label{expansionNsigma1} 
& 
% - \mcN_\sigma^1 (U^a_2) = 
 \mcG[\ep \zetaa]\psia 
=
 \sum_j  g_j  \psi_{0j} \ee_j  +\cc 
\displaybreak[0] \\& 
+  \ep \Big( 
\sum_j \big(  g_j  \psi_{1j}  - \big( \ii g'_j  \cdot   \nabla'+ (g_j^2 - |\xi_j|^2) \zeta_{00} \big) \psi_{0j}  \big) \ee_j 
+ \sum_{ji}  ( g_{ji}  \psi_{1ji}  +  A_{ji}  ) \ee_{ji} + \cc 
\Big) 
\displaybreak[0] \\&
+  \ep^2 \Big( 
\sum_j \big( g_j  \psi_{2j}  - \ii g'_j \cdot \nabla' \psi_{1j}  
% - \big( {\mathrm H}_j + (g_j^2 - |\xi_j|^2) \zeta_{10} \big)  \psi_{0j}  - \ii \bk_j  \zeta_{0j}  {\cdot} \nabla' \psi_{00}  + C_j  
- P_j 
\big) \ee_j
 + \sum_{ji} ( g_{ji} \psi_{2ji} - \ii  g'_{ji}  \cdot  \nabla'   \psi_{1ji}  + C_{ji}  ) \ee_{ji} 
\\& \phantom{ +\ep^2\Big(\ }
+ \sum_{jik} ( g_{jik} \psi_{2jik} + C_{jik} ) \ee_{jik} 
+\cc 
%  \\&  \phantom{ +\ep^2\Big(\ }
- \sqrt{\mu} \Delta' \psi_{00}   + C_0
\Big)
% \\&  
+ \ep^3 R_2^1
%%%%%%%%%%%%%%%%%%%
\\ \intertext{and}
& - \mcN_{\ep, \sigma}^2 (U^a_2) 
=  \sum_j  ( - b_j \zeta_{0j} ) \ee_j + \cc  - \zeta_{00} 
\displaybreak[0] \\&
+ \ep \Big( 
\sum_j ( - b_j \zeta_{1j} +  \bond 2\ii\bk_j \cdot \nabla' \zeta_{0j} )  \ee_j
+ \sum_{ji} ( - b_{ji}  \zeta_{1ji}  + B_{ji} ) \ee_{ji} 
+ \cc 
 - \zeta_{10}  + B_0  \Big)
\displaybreak[0] \\&
+\ep^2 \Big(
\sum_j  ( - b_j \zeta_{2j}   + 
%\bond   2 \ii
{ \ts \frac{2\ii}{\mathrm{Bo}} }
 \bk_j \cdot \nabla' \zeta_{1j} 
% +  \bond   \Delta' \zeta_{0j}   -  \ii \bk_j \psi_{0j} \cdot  \nabla' \psi_{00} +  D_j   
- Q_j 
) \ee_j
 %  \\&  \phantom{ +\ep^2\Big(\ }
+ \sum_{ji} (- b_{ji} \zeta_{2ji}  +  
% \bond 2 \ii 
{\ts \frac{2\ii}{\mathrm{Bo}} }
\bk_{ji} \cdot  \nabla'   \zeta_{1ji}  +  D_{ji} ) \ee_{ji} 
 \\&   \phantom{ +\ep^2\Big(\ }
+ \sum_{jik}  ( - b_{jik} \zeta_{2jik}  + D_{jik} ) \ee_{jik} 
+\cc
%\\&   \phantom{ +\ep^2\Big(\ }
-   \zeta_{20}  + D_0
\Big)
% \\&
+ \ep^3 R_2^2 .
\end{align*}
Here, $R_2^1$, $R_2^2$ are the residual terms of Corollary \ref{epexpansion}, 
\begin{align*}
 A_{jj} & =
 % - a_{jj,-j} 
-(g_{jj}g_j - 2 |\xi_j|^2)
 \zeta_{0j} \psi_{0j}, 
\qquad j\in J, 
\displaybreak[0] \\  
A_{ji} & = 
%- a_{ji,-i}
-(g_{ji} g_i - \xi_{ji} \cdot \xi_i)
\zeta_{0j} \psi_{0i} 
%- a_{ji,-j} 
-(g_{ji} g_j - \xi_{ji} \cdot \xi_j)
\zeta_{0i} \psi_{0j},  
\qquad (j,i)\in I_<,
\displaybreak[0] \\
A_{j,-i} &= 
% - a_{j,-i,i} 
- (g_{j,-i}g_i + \xi_{j,-i} \cdot \xi_i)
\zeta_{0j} \overline{\psi_{0i}} 
%- a_{j,-i,-j} 
- (g_{j,-i}g_j-\xi_{j,-i} \cdot \xi_j)
\overline{\zeta_{0i}}\psi_{0j},  
\qquad (j,i)\in I_<,
\end{align*}
% and
\begin{align*}
B_0 &  =  {\ts \sum_j }  (g_j^2 - |\xi_j|^2)  |\psi_{0j}|^2,
\qquad 
B_{jj} =  {\ts \frac12 }  (g_j^2 + |\xi_j|^2) \psi_{0j}^2, 
\qquad j\in J,
\displaybreak[0] \\
B_{ji} & = (g_jg_i + \xi_j \cdot \xi_i) \psi_{0j}  \psi_{0i}
\qquad
B_{j,-i}  =  (g_jg_i - \xi_j \cdot \xi_i) \psi_{0j}  \overline{\psi_{0i}}, 
\qquad (j,i) \in I_<
\end{align*}
and
\begin{align*}
P_j & =   \big( {\mathrm H}_j + (g_j^2 - |\xi_j|^2) \zeta_{10} \big)  \psi_{0j}  + \ii \bk_j  \zeta_{0j}  {\cdot} \nabla' \psi_{00}  - C_j,
\displaybreak[0] \\
Q_j & =  - \bond   \Delta' \zeta_{0j}   + \ii \bk_j \psi_{0j} \cdot  \nabla' \psi_{00} -  D_j. 
\end{align*}
The exact formulas for the functions $C, D$ are given in the Appendix.
In particular, $C_j, D_j$ consist of 
% cubically coupled terms 
cubic products 
of $\zeta_{0j}, \psi_{0j}$ and 
% quadratically coupled terms 
quadratic products 
of $\zeta_{1ji},\psi_{1ji}$ with $\zeta_{0k},\psi_{0k}$.
\end{proposition}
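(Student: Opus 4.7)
The plan is to substitute the harmonic decompositions of $(\zeta_n,\psi_n)^T$ into the two $\varepsilon$-expansions established in Corollary \ref{epexpansion}, and then to regroup by harmonic and by order in $\varepsilon$. The two essential tools are the Fourier-multiplier formulas for $G_m$ from Proposition \ref{LannesResiduals}---each $G_m$ acts diagonally on a plane wave $\ee_j$, producing the symbols $g_j$, $-\ii g'_j\cdot\nabla'$, $-\tfrac12\nabla'\cdot\mcH_{g_0}(\xi_j)\nabla'$ for $m=0,1,2$---together with the elementary product rules $\ee_j\ee_i=\ee_{ji}$, $\ee_j\overline{\ee_i}=\ee_{j,-i}$, $\ee_j\overline{\ee_j}=1$, and the formal derivative rules \eqref{spacediffexpansion}. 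The non-resonance conditions of Notation \ref{approximationnotation}.(3c) guarantee that no second- or third-order harmonic coincides with a first-order one or with the mean, so each generated term is unambiguously assigned to its harmonic.

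First I would collect the contributions that are linear in $(\zeta_n,\psi_n)$. For $\mcG[\varepsilon\zeta_a]\psi_a$ these are read off from $G_0\psi_0+\varepsilon(G_0\psi_1+G_1\psi_0)+\varepsilon^2(G_0\psi_2+G_1\psi_1+G_2\psi_0)$, which yields the symbols on each $\ee_j,\ee_{ji},\ee_{jik}$ displayed in the statement; the mean $\psi_{00}$ is killed at orders $\varepsilon^0$ and $\varepsilon$ since $g_0(0)=0$ and $\nabla g_0(0)=0$, and at order $\varepsilon^2$ produces $-\sqrt{\mu}\Delta'\psi_{00}$ through the identity $\mcH_{g_0}(0)=2\sqrt{\mu}I$. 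For $-\mcN_{\varepsilon,\sigma}^2(U_a)$ the linear structure comes directly from $-\zeta+\bond\Delta\zeta$ expanded via \eqref{spacediffexpansion}, giving the coefficients $-b_j\zeta_{nj}$ together with the surface-tension corrections $2\ii\bond^{-1}\xi_j\cdot\nabla'\zeta_{(n-1)j}$ and their analogues on the higher harmonics.

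Next, I would expand the nonlinear products systematically. For each quadratic product the leading-order factors on $\ee_j$ and $\ee_i$ combine to contributions on $\ee_{ji}$ or, when paired with a conjugate, on $\ee_{j,-i}$ and on the mean; summing over the orderings $(j,i)$ and $(i,j)$ produces the symmetric forms $A_{ji},B_{ji}$ in the statement. In particular, a short bilinear computation shows that $-G_0(\zeta_0 G_0\psi_0)-\zeta_0'\cdot\psi_0'-\zeta_0\psi_0''$ delivers the coefficients $A_{jj},A_{ji},A_{j,-i}$, while $-\tfrac12|\psi_0'|^2+\tfrac12(G_0\psi_0)^2$ yields $B_{jj},B_{ji},B_{j,-i}$ on the second-order harmonics together with $B_0$ from the diagonal pairs $\ee_j\overline{\ee_j}=1$. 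The same machinery at order $\varepsilon^2$---applied to the cross-products of $\zeta_1,\psi_1$ with $\zeta_0,\psi_0$, to the cubic blocks $G_0(\zeta_0 G_0(\zeta_0 G_0\psi_0))$, $\tfrac12(\zeta_0^2 G_0\psi_0)''$, $\tfrac12 G_0(\zeta_0^2\psi_0'')$, and to the capillary cubic $-\tfrac12(|\zeta_0'|^2\zeta_0')'$---generates the higher-harmonic coefficients $C,D$ whose exact formulas are deferred to the Appendix, together with the contributions on $\ee_j$ that combine with the mean-field couplings through $\zeta_{10},\psi_{00}$ to form the stated $P_j$ and $Q_j$.

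The main obstacle is bookkeeping rather than analysis: with three carrier waves, each bilinear (resp.\ trilinear) product spans many combinations of indices and conjugates, and one must classify every generated wavevector as belonging to $J$, $I$, $K$, or to the zeroth harmonic. The non-resonance assumption of Notation \ref{approximationnotation}.(3c) is precisely what makes this partition clean, ensuring in particular that no cubic contribution on $\ee_{jik}$ collapses back onto a first-order harmonic. The residual terms $R_2^1,R_2^2$ are inherited unchanged from Corollary \ref{epexpansion}.
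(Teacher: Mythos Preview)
Your proposal is correct and follows precisely the approach the paper takes: substitute the harmonic decompositions of $(\zeta_n,\psi_n)$ into the $\varepsilon$-expansions of Corollary \ref{epexpansion}, apply the symbol formulas for $G_m$ from Proposition \ref{LannesResiduals}(\ref{propG0expansion}) together with the product rules for plane waves, and sort by harmonic and order in $\varepsilon$. Two minor remarks: the non-resonance conditions of Notation \ref{approximationnotation}(3c) are not actually needed for the algebraic identity asserted in the proposition (they only become relevant in Section \ref{SectionFormalDerivation} when the coefficients of distinct harmonics are set to zero separately), and the surface-tension correction on $\ee_j$ carries the factor $\bond$, not $\bond^{-1}$.
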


%%%%%%%%%%%%%%%%%%%%%%%%%%%%%%%%%%%%%%

From this proposition and the expansions of the time-derivatives of $U_a$ as exemplified in \eqref{expansiontimezeta}, 
we obtain finally our full expansion of the water waves equation \eqref{wwe} with respect to $\ep$.  
\begin{corollary}\label{wwexpansion2}
With Notation \ref{approximationnotation} and that of Proposition \ref{finalexpansion}, and with \eqref{expansiontimezeta},
the water waves equation \eqref{wwe} with the ansatz \eqref{Uapp} takes the form
\begin{align*}% \label{expansionNsigma1} 
& \pl_t \zetaa - \mcG[\ep \zetaa]\psia 
 =
 \sum_j  (  - \ii \omj  \zeta_{0j} - g_j  \psi_{0j} ) \ee_j  +\cc 
\displaybreak[0]\\& 
+  \ep \Big( 
\sum_j \big( \pl_t'  \zeta_{0j} - \ii \omj  \zeta_{1j} - g_j  \psi_{1j}  +  \big( \ii g'_j  \cdot   \nabla'+ (g_j^2 - |\xi_j|^2) \zeta_{00} \big) \psi_{0j}  \big) \ee_j 
\\&   \phantom{ +\ep \Big(\ }
+ \sum_{ji}  ( - \ii \om_{ji}  \zeta_{1ji}  -  g_{ji}  \psi_{1ji}  -  A_{ji}  ) \ee_{ji} + \cc 
+ \pl_t' \zeta_{00} 
\Big) 
\displaybreak[0]\\&
+  \ep^2 \Big( 
\sum_j \big( \pl_t' \zeta_{1j} - \ii \omj  \zeta_{2j}  - g_j  \psi_{2j}  +  \ii g'_j \cdot \nabla' \psi_{1j}  + P_j \big) \ee_j
 \\& \phantom{ +\ep^2\Big(\ }
+ \sum_{ji} (  \pl_t'   \zeta_{1ji} - \ii \om_{ji}    \zeta_{2ji} - g_{ji} \psi_{2ji} +  \ii  g'_{ji}  \cdot  \nabla'   \psi_{1ji}  -  C_{ji}  ) \ee_{ji} 
\\&   \phantom{ +\ep^2\Big(\ }
+ \sum_{jik} ( - \ii \om_{jik} \zeta_{2jik} - g_{jik} \psi_{2jik} -  C_{jik} ) \ee_{jik} 
+\cc 
%  \\&  \phantom{ +\ep^2\Big(\ }
+ \pl_t'  \zeta_{10}  +  \sqrt{\mu} \Delta' \psi_{00}   -  C_0  
\Big)
\displaybreak[0]\\&  
+ \ep^3 r_2^1
% , \quad r_2^1 =  \sum_j   \pl_t' \zeta_{2j}  \ee_j + \sum_{ji} \pl_t' \zeta_{2ji}  \ee_{ji}  + \sum_{jik} \pl_t'   \zeta_{2jik}  \ee_{jik}  + \cc +  \pl_t'  \zeta_{20}   - R_2^1,
%%%%%%%%%%%
\\ \intertext{and}
& \pl_t \psia + \mcN_{\ep, \sigma}^2 (U^a_2) 
=    \sum_j (  - \ii \omj \psi_{0j} + b_j \zeta_{0j} ) \ee_j + \cc  +  \zeta_{00} 
\displaybreak[0] \\&
+ \ep \Big( 
\sum_j (  \pl_t'  \psi_{0j} - \ii \omj  \psi_{1j}  + b_j \zeta_{1j} -  \bond 2\ii\bk_j \cdot \nabla' \zeta_{0j} )  \ee_j
\\&   \phantom{ +\ep \Big(\ }
+ \sum_{ji} ( - \ii \om_{ji} \psi_{1ji} +  b_{ji}  \zeta_{1ji}  - B_{ji} ) \ee_{ji} 
+ \cc 
+ \pl_t' \psi_{00}  + \zeta_{10}  - B_0 
 \Big)
\displaybreak[0] \\&
+\ep^2 \Big(
\sum_j  (   \pl_t' \psi_{1j} - \ii \omj  \psi_{2j}  + b_j \zeta_{2j}   - \bond   2 \ii \bk_j \cdot \nabla' \zeta_{1j} + Q_j ) \ee_j
 \\&  \phantom{ +\ep^2\Big(\ }
+ \sum_{ji} (  \pl_t'   \psi_{1ji} - \ii \om_{ji} \psi_{2ji} + b_{ji} \zeta_{2ji}  -  \bond  2 \ii \bk_{ji} \cdot  \nabla'   \zeta_{1ji}  - D_{ji} ) \ee_{ji} 
\\&   \phantom{ +\ep^2\Big(\ }
+ \sum_{jik}  ( - \ii \om_{jik} \psi_{2jik} + b_{jik} \zeta_{2jik}  -  D_{jik} ) \ee_{jik} 
+\cc
% \\&   \phantom{ +\ep^2\Big(\ }
+ \pl_t'  \psi_{10} + \zeta_{20}  -  D_0 
\Big)
\displaybreak[0]\\&
+ \ep^3 r_2^2
% ,\quad r_2^2 = \sum_j   \pl_t' \psi_{2j}  \ee_j + \sum_{ji} \pl_t' \psi_{2ji}  \ee_{ji}  + \sum_{jik} \pl_t' \psi_{2jik}  \ee_{jik}  + \cc +  \pl_t'  \psi_{20}  - R_2^2.
\end{align*}
with
\begin{align*}
r_2^1 & =  \sum_j   \pl_t' \zeta_{2j}  \ee_j + \sum_{ji} \pl_t' \zeta_{2ji}  \ee_{ji}  + \sum_{jik} \pl_t'   \zeta_{2jik}  \ee_{jik}  + \cc +  \pl_t'  \zeta_{20}   - R_2^1,
\displaybreak[0] \\
r_2^2 & = \sum_j   \pl_t' \psi_{2j}  \ee_j + \sum_{ji} \pl_t' \psi_{2ji}  \ee_{ji}  + \sum_{jik} \pl_t' \psi_{2jik}  \ee_{jik}  + \cc +  \pl_t'  \psi_{20}  - R_2^2.
\end{align*}
\end{corollary}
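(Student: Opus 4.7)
The corollary is a bookkeeping assembly rather than a genuinely new computation: it follows by directly combining Proposition \ref{finalexpansion} with the time-derivative expansion \eqref{expansiontimezeta} (and its analog for $\pl_t\psia$). The plan is to substitute these expansions into the two components of $\pl_t U_a + \mcN_{\ep,\sigma}(U_a)$ from \eqref{wwe}, expand order by order in $\ep$, and group the resulting terms according to the harmonic structure of Notation \ref{approximationnotation}(3a).

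First I would write down the analog of \eqref{expansiontimezeta} for $\pl_t\psia$ by simply replacing each $\zeta_{\cdots}$ with $\psi_{\cdots}$, which is legitimate because $\zetaa$ and $\psia$ have identical multi-index structure in the ansatz \eqref{Uapp}. Next, I would form the first component $\pl_t\zetaa - \mcG[\ep\zetaa]\psia$ by subtracting the expansion of $\mcG[\ep\zetaa]\psia$ from Proposition \ref{finalexpansion} from that of $\pl_t\zetaa$, and analogously the second component by adding the expansion of $-\mcN_{\ep,\sigma}^2(U_a)$ to that of $\pl_t\psia$. At each order $\ep^0,\ep^1,\ep^2$ the terms sharing the same harmonic $\ee_j$, $\ee_{ji}$, $\ee_{jik}$, or the zeroth-order harmonic $1$ (together with complex conjugates) are collected in one block. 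The non-resonance conditions of Notation \ref{approximationnotation}(3c) guarantee that the harmonics at each order are genuinely distinct modes, so the grouping is unambiguous: at $\ep^0$ only first-order harmonic contributions survive; at $\ep^1$ the first- and second-order harmonics appear together with the mean field; at $\ep^2$ all three harmonic orders plus the mean field are present.

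The residual terms $r_2^1$ and $r_2^2$ are then read off by collecting what remains: each contains the $\ep^3$-contributions from the time-derivative expansion (i.e.\ the $\pl_t'\zeta_{2j}\ee_j$, $\pl_t'\zeta_{2ji}\ee_{ji}$, $\pl_t'\zeta_{2jik}\ee_{jik}$ terms and $\pl_t'\zeta_{20}$, and analogously for $\psi$), together with the negative of $R_2^1$, respectively $R_2^2$, inherited from Corollary \ref{epexpansion} via Proposition \ref{finalexpansion}. The only real obstacle is organizational: one must consistently track which power of $\ep$ a given product contributes to when amplitudes at different expansion orders are multiplied inside the nonlinearities, so that, e.g., an $O(\ep)$ amplitude times an $O(\ep)$ amplitude coming from $\mcG_1[\zeta_0]\psi_0$-type terms is correctly placed at the $\ep^2$-level. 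Since this matching has already been done once and for all in Proposition \ref{finalexpansion}, the present corollary reduces to straightforward term-by-term addition.
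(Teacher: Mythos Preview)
Your proposal is correct and matches the paper's approach exactly: the paper offers no separate proof, simply noting that the corollary follows by combining Proposition~\ref{finalexpansion} with the time-derivative expansion \eqref{expansiontimezeta} (and its $\psia$-analog). One minor remark: the non-resonance conditions of Notation~\ref{approximationnotation}(3c) are not actually needed for the statement of this corollary---the grouping by the symbols $\ee_j$, $\ee_{ji}$, $\ee_{jik}$ is purely formal here, and the non-resonance conditions enter only in Section~\ref{SectionFormalDerivation} when the coefficients of distinct harmonics are set to zero separately.
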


%%%%%%%%%%%%%%%%%%%%%%%%%%%%%%%%%%%%%%%%%%%%%%%%%%%%%%%%%%%%%%%%%%%%%%
% \newpage

\section{Formal derivation of the modulation equations} \label{SectionFormalDerivation}
%\section{Resonances and formal derivation} \label{SectionFormalDerivation}

Having determined the expansion in terms of $\ep$ of the left-hand side of \eqref{UaWweResiduals},  
we see that the proposed approximation \eqref{Uapp} satisfies formally the water waves problem \eqref{wwe} 
up to residual terms of order $\ep^3$, i.e., \eqref{UaWweResiduals} holds true, 
if and only if all terms of the expansion of orders up to $\ep^2$ vanish identically.  
This means that the macroscopic coefficients of each one of the mutually different harmonics have to vanish separately.  
%
% In particular, according 
According 
to Notation \ref{approximationnotation}(3a), which stipulates that all plane-waves $\ee_j$, $j\in J$, are mutually different and also different from the zeroth-order harmonic $\ee^0=1$, and our closedness assumption 
(Notation \ref{approximationnotation}(3c)) that all higher-order harmonics are neither plane-waves nor equal to $1$, 
this implies that each of the coefficients to $\ee_j$, $1=\ee^0$ and the different higher harmonics of orders up to $\ep^2$
has to vanish identically.
This necessary condition, leads to equations for the corresponding macroscopic coefficients, which are called modulation equations. 

The typical way to obtain these equations is to proceed step by step from lower to higher orders of $\ep^n$, $n=0,1,2$, and require at each step that the coefficients to each different harmonic vanishes. 
However, one could as well add for each harmonic some of the macroscopic coefficients multiplied by their orders $\ep^n$ and require that the sum vanishes up to residuals of a higher order. 
Since at each step the obtained macroscopic equations are typically undetermined, in the sense that they contain terms which are determined at a higher order of $\ep^n$, the second approach allows for a different choice of the, at order $\ep^2$, still undetermined coefficients. 
However, with both approaches we obtain \eqref{UaWweResiduals}. 
For more details on the second approach we refer to Remark \ref{SectionFormalDerivation}.3.
In the present paper, we follow the first, more standard approach.

%The standard way to obtain this is to proceed hierarchically from lower to higher orders of $\ep$ and require at each order that the corresponding coefficient vanishes. 
%%
%% The outcome of this procedure are the modulation equations for the macroscopic functions of the two-scale ansatz \eqref{Uapp}. 
%%
%%This is how we will proceed here. 
%%
%This is what we do in the following. 
%(For a modification of this approach, see Remark \ref{SectionFormalDerivation}.3).
%%However, in principle one could consider for each different harmonic the expansion of its coefficients with respect to $\ep$ as a whole and require that this expansion 
%% vanishes up to residual terms of order $\ep^3$. We present this approach in Remark 2 at the end of this section. 
%% 
%% IN DER TAT, WARUM MACHT MAN DAS NICHT AUCH UNTER EINBEZIEHUNG DER \ep^0 TERME UND FUER ALLE HARMONISCHEN ?
%% 
%%%% NEEDS PROBABLY REFORMULATION (I THINK, DONE AT 23.1.2016)

\

%%%%%%%%%%%%%%%%%%%%%%%
%%% 0order

Starting from the terms of order $\ep^0$, and according to our assumptions,
we obtain immediately from Corollary \ref{wwexpansion2} the macroscopic equations 
\begin{equation} \label{0order}
\zeta_{0j} =  \ii \frac{ \om_j}{b_j} \psi_{0j} , \quad  \omj^2   = b_j  g_j \quad \text{for $j\in J$,} 
\quad\text{and}\quad  
%\qquad 
\zeta_{00} = 0.
\end{equation}
% DIE DRITTE GLEICHUNG ABER NUR WENN MAN NICHT-TRIVIALE MODULATIONSFUNKTIONEN  HABEN WILL 
%
% COMMENT THAT THIS MEANS FINDING THE PLANE WAVE SOLUTIONS OF THE LINEARIZED WATER WAVES PROBLEM: Hence, at leading or zero order we just obtain the dispersion relation
%

\

%%%%%%%%%%%%%%%%%%%%%%%%%%%%%%%%%%%
%%% 1order

%% PART OF THE FOLLOWING SHOULD BE IN THE INTRODUCTION
%As for the $\ep^0$-case treated before we would like to derive conditions on the (macroscopic) coefficients in front of the (microscopic) harmonic functions $\ee_j$, $\ee_{ji}$ in order for the equations \eqref{full1order} to be satisfied identically. However, the difference between \eqref{full0order} and \eqref{full1order} is that now also the second order harmonics $\ee_{ji}=\ee_j\ee_i=\ee^{\ii (\theta_j+\theta_i)}$
%appear in the latter.
%For these second order harmonics the following dichotomy holds: either they are plane waves solving the linearized water waves problem or not. 
%Equivalently, either the dispersion relation $\om_{ji}^2 = b_{ji} g_{ji} $ is satisfied, or not.  
%In the former case, we say that the plane waves $\ee_j$,  $\ee_i$ are in (second-order-)resonance and generate the plane wave $\ee_{ji}$. 
%Although this is a very interesting case, it is also exceptional. It turns out that the resonance of the carrier waves manifests itself also in the macroscopic modulation equations (three-wave-interaction, see for $d=1$ \cite{SchneiderWayne}). 

At the next order $\ep^1$, by requiring that the coefficients of $\ee_j$ 
%, $j\in J$, 
vanish, 
%, using \eqref{0order} and due to Assumption \eqref{nonresonance}, 
and using
% from Corollary \ref{wwexpansion2} 
 \eqref{0order} and the identity \eqref{nablaomega}, we obtain 
% for  the coefficients of  the first order harmonics $\ee_j$, $j\in J$, 
by elimination of 
$$b_j\zeta_{1j} -\ii \om_j \psi_{1j} = \frac{b_j}{\ii \om_j} (\ii \om_j \zeta_{1j} + g_j \psi_{1j})$$
(due to $\om_j^2=b_j g_j$) the equations
%\begin{align*}
%& \ii \frac{\om_j}{b_j} \pl_t' \psi_{0j} - \ii \omj  \zeta_{1j} - g_j  \psi_{1j}  +  \ii g'_j  \cdot   \nabla' \psi_{0j} = 0,
%\\
%&   \pl_t'  \psi_{0j} - \ii \omj  \psi_{1j}  + b_j \zeta_{1j} + \bond 2 \frac{\om_j}{b_j} \bk_j \cdot \nabla' \psi_{0j} =0.
%  \end{align*}
%Since, according to \eqref{deffreq}, 
%\begin{equation}  \label{nablaomega}
%2\om_j\nabla\om_j = b_j g'_j+\bond 2\bk_j g_j , \qquad \nabla\om_j = \nabla\om(\bk_j),
%\end{equation} 
%% and \ref{AssumptionPlaneWaves}
%% MAN KOENNTE/SOLLTE DIE DEFINITION VON \nabla \om_j BEI DEN EIGENSCHAFTEN VON \om SCHREIBEN/EINFUEHREN
%and taking into account also $\om_j^2 = b_j g_j$ of \eqref{0order}, the above system is equivalent to 
% as necessary conditions for the consistency of our approximation up to residual terms of order $\ep^2$
\begin{align}
& \label{nonrestransport}
\pl_t' \psi_{0j}   +  \nabla\om_j \cdot   \nabla' \psi_{0j} = 0,
\displaybreak[0] \\
& 
% - 2 g_j   \psi_{1j}  -2\ii \om_j \zeta_{1j}  - \ii 2 \frac{\om_j}{b_j} \nabla\om_j \cdot \nabla' \psi_{0j} + 2  \ii g'_j  \cdot   \nabla' \psi_{0j}=0.
%  \zeta_{1j}  = \ii \frac{\om_j}{b_j}\psi_{1j} -  \frac{1}{b_j} \nabla\om_j \cdot \nabla' \psi_{0j}  +  \frac1{\om_j}g'_j  \cdot   \nabla' \psi_{0j}.
% \zeta_{1j}  = \ii \frac{\om_j}{b_j}\psi_{1j} +   \frac{1}{b_j} ( \nabla\om_j - \bond 2 \frac{\om_j}{b_j} \xi_j ) \cdot \nabla'\psi_{0j}.
 \label{zeta1j}
\zeta_{1j} = \ii \frac{ \om_j }{b_j} \psi_{1j} + \nablabo\om_j  \cdot \nabla' \psi_{0j}
% , \qquad \nablabo\om_j =\frac1{b_j}\Big( \nabla\om_j  - \bond 2\frac{  \om_j}{b_j}  \bk_j \Big).
\end{align}
(with the notation \eqref{nablabo}).
For the coefficient of $\ee^0=1$ we get
\begin{align}\label{zeta10}
 & \pl_t' \psi_{00} + \zeta_{10}  = B_0 = \sum_j  ( g_j^2 - |\xi_j|^2 ) |\psi_{0j}|^2
\end{align}
(see Proposition \ref{finalexpansion} for the definition of $B_0$). 
Moreover, for the coefficients of the second-order harmonics $\ee_{ji}$, $(j,i)\in I$, which surely differ from the first- and zero-order harmonics by the first non-resonance condition of Notation \ref{approximationnotation}(3c), we obtain the equations
\begin{align}\label{zetapsi1ji}
% (- \ii \om_{ji} )  \zeta_{1ji}  = g_{ji}  \psi_{1ji}  + A_{ji},  \qquad (- \ii \om_{ji} )  \psi_{1ji} =  -  b_{ji} \zeta_{1ji}  +  B_{ji},
% \begin{pmatrix} - \ii \om_{ji}  & - g_{ji} \\ b_{ji} & -\ii \om_{ji} \end{pmatrix} \begin{pmatrix} \zeta_{1ji}  \\ \psi_{1ji} \end{pmatrix} = \begin{pmatrix} A_{ji} \\ B_{ji} \end{pmatrix},
 \begin{pmatrix} \zeta_{1ji}  \\ \psi_{1ji} \end{pmatrix} = \frac1{\om_{ji}^2 - b_{ji} g_{ji}} \begin{pmatrix}  \ii \om_{ji}  & - g_{ji} \\ b_{ji} &  \ii \om_{ji} \end{pmatrix} \begin{pmatrix} A_{ji} \\ B_{ji} \end{pmatrix}
\end{align}
%that determine 
%uniquely 
%the macroscopic functions 
%$\zeta_{1ji}$, $\psi_{1ji}$ (again due to Assumption \ref{nonresonance}) 
with $A_{ji}, B_{ji}$ as in Proposition \ref{finalexpansion}. In particular, due to \eqref{0order}, we have
\begin{align} \label{As1order}
 A_{jj} & = - \ii (g_{jj}g_j -  2 |\xi_j|^2) \frac{\om_j}{b_j} \psi_{0j}^2, \quad j\in J,
\displaybreak[0]\\ \notag
A_{ji} & = - \ii \Big(  (g_{ji}g_i - \xi_{ji} \cdot \xi_i) \frac{\om_j}{b_j}  + (g_{ji}g_j - \xi_{ji} \cdot \xi_j) \frac{\om_i}{b_i} \Big) \psi_{0j} \psi_{0i}, \quad  (j,i)\in I_<,
\displaybreak[0]\\ \notag
A_{j,-i} &=  - \ii \Big( (g_{j,-i}g_i + \xi_{j,-i} \cdot \xi_i) \frac{\om_j}{b_j}  -  (g_{j,-i}g_j - \xi_{j,-i} \cdot  \xi_j) \frac{\om_i}{b_i}  \Big) \psi_{0j} \overline{\psi_{0i}},  \quad (j,i) \in I_<. 
\end{align}

\

{\sc Remark \ref{SectionFormalDerivation}.1.}\
Note, that we do not require that all second-order harmonics are mutually different, since if two or more coincide, corresponding to a subset $\Lambda\subset I$ of indices,  
and are different from all the other, then we consider in % the ansatz 
\eqref{Uapp} only one representative $(\ell,m)\in \Lambda$ and replace on the right-hand side of \eqref{zetapsi1ji} $(A_{\ell m}, B_{\ell m})^T$  by  $\sum_{(\lambda,\mu)\in\Lambda}(A_{\lambda\mu}, B_{\lambda\mu})^T$.  
\hfill$\square$

\

Commenting on the results obtained at the order $\ep^1$,
% or: so far
we note that \eqref{nonrestransport} describes only the macroscopic transport of the leading-($\ep^0$-)order amplitudes of the plane-waves $\ee_j$, $j\in J$, with the group velocity of the wave $\nabla\om_j$. 
Hence, at leading order the macroscopic amplitudes do not interact.
Of course, this is due to our non-resonance condition, viz.\ the first inequality in Notation \ref{approximationnotation}(3c). 
%
% Da muss man irgendwie auf den resonanzfall verweisen, z.b. mit Schneider Wayne, Giannoulis, und upcoming work

Quadratic interaction of the plane-wave-amplitudes can be detected only in the zeroth- and second-order harmonics of order $\ep$ via \eqref{zeta10}, \eqref{zetapsi1ji}.
Moreover, we see that \eqref{zeta10} contains also the time derivative of the leading-order non-oscillating part of the velocity potential at the surface. 
However,  we realize that this equation is not yet closed at the level $\ep$. The same applies for \eqref{zeta1j}, which relates the $\ep$-order corrections of the amplitudes to their leading order transport term.   

\

{\sc Remark \ref{SectionFormalDerivation}.2.}\  
Up to now the $\ep^2$-terms of $U_a$ in \eqref{Uapp} did not contribute to the expansion of the water waves problem as given in Corollary \ref{wwexpansion2}.
The same holds true for $\psi_{10}$. 
Hence, if we are interested only in the leading-order modulation equations we can consider the approximation $U_{a,1}$, defined as $U_a$ but without $\ep^2$-terms, and stop the derivation procedure here, choosing to set $\psi_{00}=\psi_{1j}=\psi_{10}=0$.
Thus, we have obtained up to now that the approximation 
\begin{align*}
U_{a,1}
%=  \sum_j \begin{pmatrix} \zeta_{0j}  \\  \psi_{0j}  \end{pmatrix}  \ee_j 
%+ \cc 
%+\ep \Big(  \sum_j  \begin{pmatrix} \zeta_{1j} \\ \psi_{1j} \end{pmatrix}  \ee_j
%+ \sum_{ji}  \begin{pmatrix} \zeta_{1ji} \\ \psi_{1ji} \end{pmatrix}  \ee_{ji}
%+ \cc 
%+ \begin{pmatrix} \zeta_{10} \\ \psi_{10}  \end{pmatrix} , 
%\Big)
= & \sum_j \begin{pmatrix} \ii\frac{\om_j}{b_j} \\  1  \end{pmatrix} \psi_{0j}  \ee_j 
+ \cc 
% \\&
+\ep 
\bigg(  \sum_j  \begin{pmatrix} \zeta_{1j} \\ 0 \end{pmatrix}  \ee_j + \sum_{ji}  \begin{pmatrix} \zeta_{1ji} \\ \psi_{1ji} \end{pmatrix}  \ee_{ji} + \cc + \begin{pmatrix} \zeta_{10} \\ 0   \end{pmatrix} \bigg)
\end{align*}
with the appearing functions determined 
%as above (and with the choices mentioned) 
by \eqref{nonrestransport}---\eqref{zetapsi1ji} (with $\psi_{1j}=\psi_{00}=0$)
%with $\psi_{0j}$, $\zeta_{1j}$, $\zeta_{10}$, $(\zeta_{1ji}, \psi_{1ji})^T$ given
%by \eqref{nonrestransport}, \eqref{zeta1j} (with $\psi_{1j}=0$), \eqref{zeta10} (with $\psi_{00}=0$), \eqref{zetapsi1ji}, respectively, 
satisfies the water waves problem \eqref{wwe} 
up to residual terms of order $\ep^2$, i.e. 
% in the sense of \eqref{formalconsistency}, i.e.
\begin{equation*} %\label{formalconsistency}
\partial_t U_{a,1} +\mcN_\sigma (U_{a,1}) 
%= \ep^2 \begin{pmatrix} r_1^1 \\ r_1^2 \end{pmatrix}
= \ep^2 (r_1^1,r_1^2)^T
\end{equation*}
with 
\begin{align*}
r_1^1 & =  \sum_j \pl_t' \zeta_{1j}  \ee_j +  \sum_{ji}  \pl_t'   \zeta_{1ji}   \ee_{ji}  + \cc + \pl_t'  \zeta_{10} - R_1^1, 
\displaybreak[0] \\
r_1^2 & = \sum_{ji}  \pl_t'   \psi_{1ji}   \ee_{ji}  + \cc  - R_1^2,
\end{align*}
and where $R_1^1$, $R_1^2$ are defined by \eqref{R11}, \eqref{R12} in the proof of Corollary \ref{epexpansion}. 
\hfill$\square$

\

%%%%%%%%%%%%%%%%%%%%%%%%%%%%%%%%%%%%%%%%%%%%%%%%%
%%% 2order

According to the comments made above, we expect that the interaction of three non-resonant modulated waves appears macroscopically at the first-order corrections to the leading-order amplitudes. Remark \ref{SectionFormalDerivation}.2.\ implies that in order to obtain the corresponding modulation equations, we have to carry our derivation procedure to the next order, i.e.\ we need to eliminate  also the $\ep^2$-terms of the expansions given in Corollary \ref{wwexpansion2}. 
% PUT DIFFERENTLY, BUT NOT TO BE PREFERED: 
%However, if one wants to obtain a better consistency result  with residual terms of formal order $\ep^3$ one has to eliminate identically also the $\ep^2$-terms in the expansion of  
%$\partial_t U_a +\mcN_\sigma (U_a) $ in Corollary \ref{wwexpansion2}.
%
%
Due to our assumptions in Notation \ref{approximationnotation}(3a,c), this has to be done separately for each $\ee_j$, $j\in J$, the zeroth-order harmonic and the higher-order harmonics. 
% WARUM DAS SO IST, WURDE EXPLIZIT IMMER NOCH NICHT ERKLAERT

Equating the coefficients of the first-order harmonics $\ee_j$ of order $\ep^2$ to zero, 
and using the equations \eqref{0order},  \eqref{nonrestransport}, \eqref{zeta1j}, \eqref{zeta10} 
and the identities \eqref{nablaomega},  \eqref{Hessiannew} with the notation \eqref{nablabo}, 
we obtain by elimination of $b_j\zeta_{2j} -\ii \om_j \psi_{2j}$, 
that the two equations can be written equivalently in the form 
%\begin{align*}
%%& \pl_t' \zeta_{1j} - \ii \omj  \zeta_{2j}  - g_j  \psi_{2j}  +  \ii g'_j \cdot \nabla' \psi_{1j}  + \tilde P_j =0,
%&  \ii \frac{ \om_j }{b_j}  \pl_t'   \psi_{1j} 
% % + \nablabo\om_j  \cdot \nabla'  \pl_t'   \psi_{0j}  
% - \nablabo\om_j  \cdot \nabla' ( \nabla\om_j \cdot   \nabla' \psi_{0j} )
%  - \ii \omj  \zeta_{2j}  - g_j  \psi_{2j}  +  \ii g'_j \cdot \nabla' \psi_{1j}  + \tilde P_j =0,
%\\
%%& \pl_t' \psi_{1j} - \ii \omj  \psi_{2j}  + b_j \zeta_{2j}   - \bond   2 \ii \bk_j \cdot \nabla' \zeta_{1j} + \tilde Q_j =0.
%& \pl_t' \psi_{1j} - \ii \omj  \psi_{2j}  + b_j \zeta_{2j}   + \bond   2 \frac{ \om_j }{b_j} \bk_j \cdot \nabla' \psi_{1j} - \bond   2 \ii \bk_j \cdot \nabla' ( \nablabo\om_j  \cdot \nabla' \psi_{0j}  ) + \tilde Q_j =0,
%\end{align*}
%which, with the same method as for the order $\ep$, i.e.\ using \eqref{nablaomega} and $\om_j^2 = b_j g_j$, and the abbreviation \eqref{nablabo}, can be written equivalently in the form
\begin{align}
\label{psi1j}
 &  \pl_t'   \psi_{1j} + \nabla\om_j \cdot \nabla' \psi_{1j}   = E_j ,
\displaybreak[0] \\
\label{zeta2j}
 &  \zeta_{2j}  =  \ii \frac{\om_j}{b_j}  \psi_{2j}  +  \nablabo\om_j  \cdot \nabla' \psi_{1j} + F_j ,
\end{align}
%% where we used \eqref{nablabo}, 
%with
%\begin{align*}
%& \tilde E_j 
%=  \ii \frac{ b_j }{2\om_j} \big( - \nablabo\om_j  \cdot \nabla' ( \nabla\om_j \cdot   \nabla' \psi_{0j} ) + \tilde P_j \big)   +  \bond  \ii \bk_j \cdot \nabla' ( \nablabo\om_j  \cdot \nabla' \psi_{0j} ) 
%- \frac12 \tilde Q_j,
% \\&
%\tilde F_j 
%=  - \ii \frac{1 }{2 \om_j} ( - \nablabo\om_j  \cdot \nabla' ( \nabla\om_j \cdot   \nabla' \psi_{0j} ) + \tilde P_j )   + \bond  \ii  \frac1{b_j}  \bk_j \cdot \nabla' ( \nablabo\om_j  \cdot \nabla' \psi_{0j}  ) 
%-  \frac1{2b_j}  \tilde Q_j .
%\end{align*}
%%%% MIT \zeta_{10}
%\begin{align*}
%& \tilde E_j 
%=  \ii \frac12   \nabla' \cdot \mcH_\om(\bk_j) \nabla' \psi_{0j} 
%+  \ii \frac{ b_j }{2\om_j}  (g_j^2 - |\xi_j|^2) \zeta_{10}  \psi_{0j} 
% -  \ii \bk_j \psi_{0j} \cdot  \nabla' \psi_{00} 
%-  \ii \frac{ b_j }{2\om_j}  C_j   + \frac12 D_j,
% \\&
%\tilde F_j 
%=  - \ii \frac{1 }{2 b_j }  \nabla' \cdot \mcH_\om(\bk_j) \nabla' \psi_{0j} 
%+  \ii \frac{1 }{b_j }  \bond  \Big( 2 \bk_j \cdot \nabla' ( \nablabo\om_j  \cdot \nabla'  \psi_{0j} ) + \frac{\om_j}{b_j}\Delta'  \psi_{0j} \Big) 
%\\&\qquad
%- \ii \frac{1 }{2 \om_j}  (g_j^2 - |\xi_j|^2) \zeta_{10}  \psi_{0j}  
%+  \ii \frac{1 }{2 \om_j}  C_j  +  \frac1{2b_j}  D_j .
%\end{align*}
%%% OHNE \zeta_{10}
with
%\begin{align} \label{tildeEj}
% E_j 
%= &\  \ii \frac12   \nabla' \cdot \mcH_\om(\bk_j) \nabla' \psi_{0j} 
%-  \ii \frac{ b_j }{2\om_j}  (g_j^2 - |\xi_j|^2) \psi_{0j}   \pl_t' \psi_{00} 
% -  \ii \bk_j \psi_{0j} \cdot  \nabla' \psi_{00} 
%\\&\  \notag
%+  \ii \frac{ b_j }{2\om_j}  (g_j^2 - |\xi_j|^2) B_0 \psi_{0j} 
%-  \ii \frac{ b_j }{2\om_j}  C_j   
%+ \frac12 D_j,
%% \\ \notag
%\end{align} 
\begin{align} \label{tildeEj}
 E_j 
= &\  \ii \frac12   \nabla' \cdot \mcH_\om(\bk_j) \nabla' \psi_{0j} 
-  \ii  \psi_{0j} \Big( \frac{ b_j }{2\om_j}  (g_j^2 - |\xi_j|^2)   \pl_t' +  \bk_j  \cdot  \nabla'\Big) \psi_{00}
+ \tilde E_j
\end{align} 
where
\begin{align}\label{truetildeEj}
\tilde E_j =  \ii \frac{ b_j }{2\om_j}  (g_j^2 - |\xi_j|^2) B_0 \psi_{0j} 
-  \ii \frac{ b_j }{2\om_j}  C_j   
+ \frac12 D_j,
\end{align}
and 
\begin{align*}
F_j 
= & - \ii \frac{1 }{2 b_j }  \nabla' \cdot \mcH_\om(\bk_j) \nabla' \psi_{0j} 
+  \ii \frac{1 }{b_j }  \bond  \Big( 2 \bk_j \cdot \nabla' ( \nablabo\om_j  \cdot \nabla'  \psi_{0j} ) + \frac{\om_j}{b_j}\Delta'  \psi_{0j} \Big) 
\displaybreak[0] \\& \notag
+  \ii \frac{1 }{2 \om_j}  (g_j^2 - |\xi_j|^2)   \psi_{0j}   \pl_t' \psi_{00}
- \ii \frac{1 }{2 \om_j}  (g_j^2 - |\xi_j|^2)  B_0   \psi_{0j}  
+  \ii \frac{1 }{2 \om_j}  C_j  
+  \frac1{2b_j}  D_j .
% \end{align}
\end{align*}
We note that the solution of \eqref{psi1j} closes now equation \eqref{zeta1j} and determines $\zeta_{1j}$.
%, which does not appear in \eqref{psi1j}. 
In particular, $\tilde E_j$ and 
the three last terms of $F_j$ consist of cubic products of $\psi_{0j}$ (cf.\ Proposition \ref{finalexpansion}, \eqref{0order}, \eqref{zetapsi1ji} and
\eqref{As1order}).
Hence, $E_j$, $F_j$ depend only on $\psi_{0j}$ and $\psi_{00}$. 

While $\psi_{0j}$ is determined by \eqref{nonrestransport}, we obtain from the coefficient of the $\ee^0$-term of order $\ep^2$ in the 
% $\psi_{00}$ is determined by the equation corresponding to the zeroth-order harmonic of order $\ep^2$ of the 
first expansion of Corollary \ref{wwexpansion2} 
and \eqref{zeta10},  
%which gives, 
by elimination of $\pl_t'\zeta_{10}$, 
the second-order inhomogeneous wave equation
\begin{equation}\label{psi00}
{\pl_t'}^2  \psi_{00} -  \sqrt{\mu} \Delta' \psi_{00}   =  \pl_t'   B_0 - C_0   =   \sum_j  \Big(  (g_j^2 - |\xi_j|^2) \pl_t' + 2  \frac{\om_j}{b_j} \xi_j  \cdot  \nabla'  \Big)  |\psi_{0j}|^2.
\end{equation}
% NAME: MEAN FLOW !!! EXPLAIN !!!
%with 
%\begin{equation*}
% \pl_t'   B_0  - C_0 =   \sum_j  \Big(  (g_j^2 - |\xi_j|^2) \pl_t' + 2  \frac{\om_j}{b_j} \xi_j  \cdot  \nabla'  \Big)  |\psi_{0j}|^2.
%\end{equation*}
Its solution determines then $\zeta_{10}$ through \eqref{zeta10}. 
Moreover, for the coefficient of $\ee^0$  at order $\ep^2$ in the second expansion of Corollary \ref{wwexpansion2} we obtain
\begin{equation}\label{zeta20}
\pl_t' \psi_{10} + \zeta_{20} = D_0 = \sum_j  \big(     \ii    ( \xi_j - g_j g'_j ) \cdot \nabla' \psi_{0j}  + ( g_j^2  - |\xi_j|^2 )  \psi_{1j} \big)  \overline{ \psi_{0j} }   + \cc.
\end{equation}  

Finally, equating the coefficients of the higher harmonics  to zero, we obtain 
\begin{align}\label{zetapsi2higher}
\begin{pmatrix}  \zeta_{2ji} \\ \psi_{2ji} \end{pmatrix}  
&=  \frac1{\om_{ji}^2 - b_{ji}g_{ji}} \begin{pmatrix} \ii \om_{ji} & - g_{ji}  \\ b_{ji}  &  \ii \om_{ji} \end{pmatrix}  
\begin{pmatrix} C_{ji} - \pl_t'   \zeta_{1ji} -  \ii  g'_{ji}  \cdot  \nabla'   \psi_{1ji} \\  D_{ji} - \pl_t'   \psi_{1ji} +  \bond  2 \ii \bk_{ji} \cdot  \nabla'   \zeta_{1ji} \end{pmatrix}, 
\displaybreak[0] \\ \notag
\begin{pmatrix}  \zeta_{2jik} \\  \psi_{2jik} \end{pmatrix} 
&= \frac1{\om_{jik}^2 - b_{jik}g_{jik}} \begin{pmatrix} \ii \om_{jik} & - g_{jik}  \\ b_{jik}  &  \ii \om_{jik} \end{pmatrix}  
\begin{pmatrix}  C_{jik} \\  D_{jik} \end{pmatrix} ,
\end{align}
where the right-hand sides depend only on the functions $\psi_{0j}$ (and its first order derivatives)  and $\psi_{1j}$, see Appendix and \eqref{zeta1j}, \eqref{zetapsi1ji}. 
The case where two or more higher harmonics coincide is treated as in Remark \ref{SectionFormalDerivation}.1.
In the case where only the first non-resonance condition of Notation \ref{approximationnotation}(3c)
%, but not the second 
holds true, i.e.\ when some third-order harmonic $\ee_{jik}$ equals $\ee_j$, the corresponding $C_{jik}$, $D_{jik}$ of the expansions in Corollary  \ref{wwexpansion2} contribute to \eqref{psi1j}, \eqref{zeta2j} in the same way as the $C_j$, $D_j$, and the $\zeta_{2jik}, \psi_{2jik}$ can be set equal to zero. 
According to the Appendix, the terms $C_{jik}, D_{jik}$ are cubic products of the leading-order amplitudes $\psi_{0j}$, $j\in J$.

\

In analogy to the procedure in Remark \ref{SectionFormalDerivation}.2.\ at the level $\ep$, we can close the system of modulation equations derived so far, 
by setting  $\psi_{10}= \psi_{2j} = \psi_{20} =0$.
%
%Summarizing, 
Thus, 
we have obtained that the second order approximation
\begin{align}\label{Uappfinal}
U_a = 
& \sum_j \begin{pmatrix} \ii\frac{\om_j}{b_j}  \\  1 \end{pmatrix} \psi_{0j} \ee_j 
+ \cc 
+ \begin{pmatrix} 0 \\ \psi_{00} \end {pmatrix}
\displaybreak[0] \\ \notag & 
+ \ep \Big( \sum_j  \begin{pmatrix} \zeta_{1j} \\ \psi_{1j} \end{pmatrix}  \ee_j
+ \sum_{ji}  \begin{pmatrix} \zeta_{1ji} \\ \psi_{1ji} \end{pmatrix}  \ee_{ji}
+ \cc 
+ \begin{pmatrix} \zeta_{10} \\ 0 \end{pmatrix} \Big) 
\displaybreak[0]\\ \notag &
+ \ep^2 \Big( \sum_j  \begin{pmatrix} \zeta_{2j} \\ 0 \end{pmatrix}   \ee_j
+ \sum_{ji}   \begin{pmatrix} \zeta_{2ji} \\ \psi_{2ji} \end{pmatrix}  \ee_{ji}
+ \sum_{jik}    \begin{pmatrix} \zeta_{2jik} \\ \psi_{2jik} \end{pmatrix} \ee_{jik}
+ \cc 
+  \begin{pmatrix} \zeta_{20} \\ 0 \end{pmatrix} \Big)
\end{align}
with the appearing macroscopic functions determined as above (with $\psi_{2j}=0$ in \eqref{zeta2j} and $\psi_{10}=0$ in \eqref{zeta20}) 
is consistent with the water waves problem \eqref{wwe}, 
in the sense that it satisfies
\begin{equation} \label{UaConsistency}
\partial_t U_a +\mcN_{\ep,\sigma} (U_a) 
= \ep^3 ( r_2^1 , r_2^2 )^T, 
%= \ep^3 \begin{pmatrix} r_2^1 \\ r_2^2 \end{pmatrix},
\end{equation}
where
\begin{align}\label{r21}
r_2^1 & = \sum_j   \pl_t' \zeta_{2j}  \ee_j + \sum_{ji} \pl_t' \zeta_{2ji}  \ee_{ji}  + \sum_{jik} \pl_t'   \zeta_{2jik}  \ee_{jik}  + \cc +  \pl_t'  \zeta_{20}  - R_2^1, 
\displaybreak[0] \\\notag
r_2^2 & = \sum_{ji} \pl_t' \psi_{2ji}  \ee_{ji}  + \sum_{jik} \pl_t'   \psi_{2jik}  \ee_{jik}  + \cc  - R_2^2
\end{align}
with $R_2^1$, $R_2^2$ as in Corollary \ref{epexpansion}.

\

In particular, all macroscopic functions can be determined if we solve the system 
\begin{align} \label{mainsystem}
\begin{cases}
\pl_t' \psi_{0j}   +  \nabla\om_j \cdot   \nabla' \psi_{0j} = 0,
\\
{\pl_t'}^2  \psi_{00} -  \sqrt{\mu} \Delta' \psi_{00}   =  \sum_j  \Big(  (g_j^2 - |\xi_j|^2) \pl_t' + 2  \frac{\om_j}{b_j} \xi_j  \cdot  \nabla'  \Big)  |\psi_{0j}|^2,
\\
\pl_t'   \psi_{1j} + \nabla\om_j \cdot \nabla' \psi_{1j}   = E_j
\end{cases}
\end{align}
with $j\in J=\{1,2,3\}$ and 
% $E_j$ as in \eqref{tildeEj}. 
\begin{align} \tag{\ref{tildeEj}}
 E_j 
= &\  \ii \frac12   \nabla' \cdot \mcH_\om(\bk_j) \nabla' \psi_{0j} 
-  \ii  \psi_{0j} \Big( \frac{ b_j }{2\om_j}  (g_j^2 - |\xi_j|^2)   \pl_t' +  \bk_j  \cdot  \nabla'\Big) \psi_{00}
+ \tilde E_j
\end{align} 
with $\tilde E_j$ as in \eqref{truetildeEj}.

% COMMENTS ON THE $\ep^0$ AND $\ep^2$ EQUATIONS AS PERFORMED FOR $\ep^1$

% MUSS MAN HIER NOCHMAL DIE RESONANZ-ASSUMPTIONS ERWAEHNEN ???

\

We close this section with the remark mentioned at the beginning of the section, concerning an alternative derivation procedure 
in comparison to the one presented here. 
% mentioned at the beginning of this section.
%
% SAGEN HIER ODER IN DER INTRODUCTION, DASS DIESER APPROACH FUER DIE HERLEITUNG VON DAVEY-STEWARTSON VON INTERESSE  IST. 

\

{\sc Remark \ref{SectionFormalDerivation}.3.}\ 
In the approach presented above, we derived \emph{consecutively} the macroscopic equations that make the first-order-harmonic terms vanish \emph{at each} order $\ep^1$ and $\ep^2$ \emph{separately}.
% and completely}.
%
In a different approach, which in the case of a single carrier wave leads to the so-called \emph{Benney-Roskes system} (\cite{BR}, cf.\ also \cite[\S 8.2.5]{Lannes} and the references given therein), 
one considers \emph{jointly} the coefficients of the first-order-harmonics of orders $\ep^1$ and $\ep^2$ and derives conditions such that these joint coefficients vanish up to terms of order $O(\ep^3)$. 
More precisely, starting from the coefficients of the first-order harmonics of orders $\ep^1$, $\ep^2$ of Corollary \ref{wwexpansion2}, with \eqref{0order}, \eqref{nablaomega}, \eqref{nablabo} 
and \eqref{Hessiannew} one can write equivalently 
\begin{multline*}
( \pl_t'  + \nabla\om_j  \cdot   \nabla' ) \psi_{0j} 
- \ep \ii \frac{1}{2 \om_j}  ( \pl_t' + \nabla\om_j \cdot \nabla' ) ( b_j  \zeta_{1j}  + \ii \om_j \psi_{1j} ) 
\\
+  \ep \ii \frac{b_j}{2 \om_j} \nablabo\om_j \cdot \nabla' ( b_j \zeta_{1j} - \ii \om_j \psi_{1j}  - b_j \nablabo\om_j \cdot   \nabla' \psi_{0j}  ) - \ep E_j  = O(\ep^2), 
\end{multline*}
\begin{multline*}
( \pl_t'  + \nabla\om_j \cdot   \nabla' ) \psi_{0j} + b_j \zeta_{1j}  - \ii \omj  \psi_{1j}  -  b_j \nablabo\om_j \cdot \nabla' \psi_{0j} 
\\
+  \ep ( b_j \zeta_{2j}  - \ii \omj  \psi_{2j}  )  + \ep ( \pl_t' \psi_{1j}   -  \bond   2 \ii \bk_j \cdot \nabla' \zeta_{1j}  + Q_j )  = O(\ep^2)
\end{multline*}
with $E_j$ as in \eqref{tildeEj} (although typically with the second and fourth term written together as  $ \ii \frac{ b_j }{2\om_j}  (g_j^2 - |\xi_j|^2) \zeta_{10} \psi_{0j}$, see \eqref{zeta10}).
%\begin{align*}
% \tilde E_j 
% &=  \ii \frac{ b_j }{2\om_j} \big( - \nablabo\om_j  \cdot \nabla' ( \nabla\om_j \cdot   \nabla' \psi_{0j} ) + \tilde P_j \big)   +  \bond  \ii \bk_j \cdot \nabla' ( \nablabo\om_j  \cdot \nabla' \psi_{0j} ) - \frac12 \tilde Q_j,
%\\&
%= -  \ii \frac{ b_j }{2\om_j} \nablabo\om_j  \cdot \nabla' ( \nabla\om_j \cdot   \nabla' \psi_{0j} )  +  \bond  \ii \bk_j \cdot \nabla' ( \nablabo\om_j  \cdot \nabla' \psi_{0j} ) 
% +  \ii \frac{ b_j }{2\om_j}  \tilde P_j    - \frac12 \tilde Q_j,
%\\
%& = -  \ii \frac{ b_j }{2\om_j} \nablabo\om_j  \cdot \nabla' (b_j \nablabo\om_j \cdot   \nabla' \psi_{0j} )   +  \ii \frac{ b_j }{2\om_j}  \tilde P_j    - \frac12 \tilde Q_j
%\\& \quad
%-  \ii \bond \nablabo\om_j  \cdot \nabla' ( \xi_j \cdot   \nabla' \psi_{0j} )  
%+ \ii \bond \bk_j \cdot \nabla' ( \nablabo\om_j  \cdot \nabla' \psi_{0j} ) 
%\\
%& = -  \ii \frac{ b_j }{2\om_j} \nablabo\om_j  \cdot \nabla' (b_j \nablabo\om_j \cdot   \nabla' \psi_{0j} )   +  \ii \frac{ b_j }{2\om_j}  \tilde P_j    - \frac12 \tilde Q_j
%\end{align*}
%since $a\cdot \nabla (b\cdot\nabla f) = b\cdot \nabla (a\cdot \nabla f)$

At leading order in $\ep$ we obtain from the two equations  (consecutively)
\begin{align*}
&  \pl_t'  \psi_{0j}  + \nabla\om_j \cdot   \nabla'  \psi_{0j} = O(\ep), \quad    b_j \zeta_{1j} - \ii \omj  \psi_{1j}   -  b_j \nablabo\om_j \cdot \nabla' \psi_{0j} = O(\ep)
\end{align*}
(in analogy to \eqref{nonrestransport}, \eqref{zeta1j}).  
Thus, requiring that the equation on the right is satisfied \emph{exactly} (i.e., without $O(\ep)$-terms), we obtain \eqref{zeta1j} and 
\begin{align*}
&  ( \pl_t'  + \nabla\om_j  \cdot   \nabla' ) \psi_{0j} 
- \ep \ii \frac{1}{2 \om_j}  ( \pl_t' + \nabla\om_j \cdot \nabla' ) ( b_j  \zeta_{1j}  + \ii \om_j \psi_{1j} )  = \ep E_j + O(\ep^2), 
\displaybreak[0] \\ & 
% ( \pl_t'  + \nabla\om_j \cdot   \nabla' ) \psi_{0j}  +  \ep ( b_j \zeta_{2j}  - \ii \omj  \psi_{2j}  )  + \ep ( \pl_t' \psi_{1j}   -  \bond   2 \ii \bk_j \cdot \nabla' \zeta_{1j}  + \tilde Q_j ) = O(\ep^2)
b_j \zeta_{2j}  - \ii \omj  \psi_{2j}  = -\frac1\ep ( \pl_t'  + \nabla\om_j \cdot   \nabla' ) \psi_{0j}  -  \pl_t' \psi_{1j} + \bond   2 \ii \bk_j \cdot \nabla' \zeta_{1j} -  Q_j  +   O(\ep).
\end{align*}
The second equation plays the same r\^ole as \eqref{zeta2j}.  Choosing $b_j \zeta_{2j}  + \ii \omj  \psi_{2j}$ arbitrarily, we can determine $\zeta_{2j}$ and $\psi_{2j}$. 
% THE SAME ??? 

Now, we require  
\begin{equation}\label{BRchoice}
 (\pl_t' + \nabla\om_j \cdot \nabla' ) ( b_j  \zeta_{1j}  + \ii \om_j \psi_{1j} ) =0.
\end{equation}
Together with \eqref{zeta1j}, this determines $\zeta_{1j}$ and $\psi_{1j}$ (in difference to our approach above, using \eqref{zeta1j}, \eqref{psi1j} for that),
%to determine $\zeta_{1j}$ and $\psi_{1j}$
and yields
\begin{align}\label{BR}
 ( \pl_t'  + \nabla\om_j  \cdot   \nabla' ) \psi_{0j} = \ep E_j + O(\ep^2).
\end{align}
According to Lannes,  see \cite[\S 8.2.3., fn.\ 9]{Lannes} and \cite{Lannes98},
% and \cite{L295}, 
% this choice 
the choice \eqref{BRchoice} 
is the only possible in order to avoid the secular growth of  $\ep(b_j \zeta_{1j} + \ii \om_j  \psi_{1j} )$ on time scales of order $t'' = \ep t' = \ep^2 t$.
This implies that the results obtained in the present paper are relevant only on time scales of order $t'=\ep t$.
Note, that with 
%the choice 
\eqref{BRchoice} we obtain from \eqref{zeta1j} and \eqref{BR}
\begin{equation*}%\label{psi1jBR} 
( \pl_t'  + \nabla\om_j \cdot \nabla' ) \psi_{1j}  = \ep  \ii \frac{b_j}{2\omj} \nablabo\om_j  \cdot \nabla' E_j +\O(\ep^2)
\end{equation*}
instead of \eqref{psi1j} with our approach. However, in both cases we obtain
\begin{equation*}
 (\pl_t'   +  \nabla\om_j  \cdot \nabla' ) (\psi_{0j} + \ep \psi_{1j})   = \ep E_j + \O(\ep^2).
 \end{equation*} 

The system 
%\begin{align*}
%&  ( \pl_t'  + \nabla\om_j  \cdot   \nabla' ) \psi_{0j} = \ep \tilde E_j,
%\\ 
%& \pl_t'  \zeta_{10} + \sqrt{\mu} \Delta' \psi_{00}  =  -  2 \sum_j  \frac{\om_j}{b_j}  \bk_j \cdot\nabla' |\psi_{0j}|^2,
%\\
%& \pl_t' \psi_{00} + \zeta_{10} =   \sum_j  (g_j^2-|\xi_j|^2) |\psi_{0j}|^2
%\end{align*}
\begin{align*}
\begin{cases}
( \pl_t'  + \nabla\om_j  \cdot   \nabla' ) \psi_{0j} = \ep E_j,
\\ 
\pl_t'  \zeta_{10} + \sqrt{\mu} \Delta' \psi_{00}  =  -  2 \sum_j  \frac{\om_j}{b_j}  \bk_j \cdot\nabla' |\psi_{0j}|^2,
\\
\pl_t' \psi_{00} + \zeta_{10} =   \sum_j  (g_j^2-|\xi_j|^2) |\psi_{0j}|^2
\end{cases}
\end{align*}
with $E_j$ as in \eqref{tildeEj} (in the form mentioned above) can be seen as the
\emph{Benney-Roskes system for three capillary-gravity water waves}. The original Benney-Roskes system was derived for a single gravity water wave (see \cite{BR} and \cite[(8.34)]{Lannes}),
and can be recovered from the system above by setting the surface tension to zero and assuming that two of the three waves are identically vanishing. For its well-posedness we refer to \cite{PonceSaut05}. 
Note, that in contrast to the system \eqref{mainsystem} that we consider in this paper, the equations of the Benney-Roskes system are coupled.     
%
% The coefficients of the higher-order harmonics are obtained as presented above. 
% IST DAS SICHER ??? SOLLTE MAN FESTSTELLEN
\hfill$\square$

%
%%% WAS SIND DIE UNTERSCHIEDE LETZTENDLICH FUER UNSEREN APPROACH ? NICHT DAS DER AM ENDE NICHT GILT...

%%%%%%%%%%%%%%%%%%%%%%%%%%%%%%%%%%%%%%%%%%%%%%%%%%%%%%%%%
% \newpage

\section{Justification in the case without surface tension} \label{SectionJustification}

For the justification of the modulation equations derived in Section \ref{SectionFormalDerivation} in the case of pure gravity waves (i.e.\ without surface tension, $\sigma = \bond = 0$)
we use the well-posedness  result for gravity water waves of finite depth obtained by Lannes and Alvarez-Samaniego in \cite{Lannes05, ASL08, ASL08b, Lannes}
% in the form 
as presented in \cite[Theorems 4.16, 4.18]{Lannes}. 
% MANCHES VON DEM FOLGENDEN BRAUCHT MAN EVENTUELL SCHON IN DER INTRODUCTION BZW. SOLLTE ES DORT HINTUN
%
%
There, the well-posedness 
% of the gravity water waves problem of finite depth  
is established with respect to the energy norm 
\begin{equation}\label{energyN}
\mcE_\ep^N (U) = |\mfP \psi|_{H^{t_0+3/2}}^2 + \sum_{\alpha\in\N_0^d, |\alpha| \le N} | \partial^\alpha \zeta |_2^2 + | \mfP \psi_{\ep, (\alpha)} |_2^2, \qquad N\in \N,
\end{equation}
with  $U=(\zeta,\psi)^T$, 
\begin{align}
\notag
& 
 \psi_{(0)}=\psi, \qquad  
\psi_{\ep, (\alpha)} = \pl^\alpha \psi - (w[\ep\zeta] \ep \psi) \pl^\alpha\zeta\quad\text{for}\quad \alpha\neq 0,
\\
% \label{w}
\label{mfP}
% & w[\ep\zeta] \psi = \frac{ \mcG[\ep\zeta]\psi +\ep\nabla\zeta\cdot\nabla\psi}{1+\ep^2|\nabla\zeta|^2}
& w[\zeta] \psi = \frac{ \mcG[\zeta]\psi + \nabla\zeta\cdot\nabla\psi}{1+ |\nabla\zeta|^2}
\qquad\text{and}\qquad 
\mfP=\frac{|D|}{(1+\sqrt{\mu}|D|)^{1/2}} 
\end{align}
(recall the Fourier-multiplier notation \eqref{FourierMultiplier}).

$H^s(\R^d)$ ($s\in\R$) are the fractional Sobolev spaces (see, e.g., \cite{Hoermander})
\begin{equation}\label{Hs}
H^s(\R^d) = \{ u \in \mfS'(\R^d): |u|_{H^s} = |\Lambda^s u |_2<\infty\},\qquad |u|_2^2 = \int_{\R^d} |u(X)|^2 d\, X,
\end{equation}
where $\mfS'(\R^d)$ is the space of tempered distributions and
$\Lambda  
= \langle D \rangle 
= (1+|D|^2)^{1/2}$ is the fractional derivative.

Associated to the energy norm \eqref{energyN} is the space of time-dependent functions that remain bounded with respect to this norm up to the time $T>0$
\begin{equation*}
E^N_{\ep,T} =\{ U \in C([0,T]; H^{t_0+2} \times \dot H^2 (\R^d)) : \mcE^N_\ep (U(\cdot))\in L^\infty( [0,T] )\}
\end{equation*}
and the space of initial data
\begin{equation*}
E^N_{\ep,0} =\{ U^0 \in H^{t_0+2} \times \dot H^2 (\R^d) : \mcE^N_\ep(U^0) < \infty \}.
\end{equation*}
% where 
Here, $\dot H^{s+1}(\R^d)$ ($s\in\R$) are the Beppo-Levi  topological vector spaces
\begin{equation*}
\dot H^{s+1}(\R^d) = \{f\in L^2_{loc}(\R^d): \nabla f \in H^s(\R^d)^d\}
\end{equation*}
endowed with the semi-norm $| f |_{\dot H^{s+1}} = | \nabla f |_{H^s}$, which implies that $\dot H^{s+1}(\R^d)/\R$ are Banach spaces 
% with respect to the quotient norm  (???)
(see, e.g., \cite[\S 2.1.2.]{Lannes} and \cite{DenyLions}). 
%
% Hier P und Alignac's good unknown erklaeren
% 

\

% Following Lannes, the 
The
well-posedness result of \cite[Th.\  4.16, 4.18]{Lannes} can be adapted to the case of deep water 
%($\mu\gg1$) 
($\mu\ge1$)
of finite depth (with flat bottom, and full transversality of the 
waves if $d=2$) as follows (see also Remark \ref{SectionJustification}.3.\ below).
%This result, adapted to our case here 
%(flat bottom and full transversality of the waves, i.e.\ in the notation of \cite{Lannes}: $b=0$, $\beta=0$ and $\gamma=1$),  states the following:

\begin{theorem} \label{JustificationTheorem}
Let $t_0>d/2$, $t_0\ge 1$,  $N\ge t_0 + t_0\vee2 + 3/2$, $1\le \mu\le \mu_{\max} < \infty $, and $0<\ep \le 1$.
Assume that $U^0 = (\zeta^0,\psi^0)^T \in E_{\ep,0}^N$ with 
\begin{equation}\label{initialdataconditions}
1 - \ep |\zeta^0|_{\infty} \ge h_{\min}>0
%, \qquad \exists \ a_0>0:\quad \mathfrak{a}(U^0)\ge a_0. 
\qquad\text{and}\qquad 
\mathfrak{a}_\ep (U^0)  \ge a_0 >0,
\end{equation} 
where $\mathfrak{a}_\ep (U^0)$ is defined in Remark 4.2 below.
 Then, there exists $T>0$ and 
 %for every $\ep\in(0,\ep_0)$ 
 a unique solution $U=(\zeta,\psi)^T \in E^N_{\ep, T/\ep}$ to the water waves problem 
% to be adapted;  (4.1) of \cite{Lannes}; write it with tilde over the N's maybe
\begin{align}\label{wweTheorem}
\partial_t U +  \mcN_{\ep,0} (U)  = 0,\qquad
\mcN_{\ep, 0} (U) & = \begin{pmatrix}  - \mcG[\ep\zeta]\psi 
\\
 \zeta 
%+ \frac\ep2 \left( |\nabla\psi|^2 - \frac{( \mcG[\ep\zeta]\psi + \ep  \nabla\zeta\cdot \nabla \psi)^2}{1 + \ep^2 |\nabla\zeta|^2} \right)
+ \frac\ep2 |\nabla\psi|^2 - \frac\ep2 \frac{( \mcG[\ep\zeta]\psi + \ep  \nabla\zeta\cdot \nabla \psi)^2}{1 + \ep^2 |\nabla\zeta|^2} 
 \end{pmatrix}
\end{align} 
with initial data $U^0$, and 
 % Moreover,  $\frac1T= c_1$ and $ \sup_{t\in [0,T/\ep]} \mcE^N (U(t))  = c_2$, 
\begin{equation*}
\frac1{T} = c_1,  \qquad  \sup_{t\in [0,T/\ep]} \mcE^N_\ep (U(t))  = c_2, 
\end{equation*}
where the constants $c_j = C(\mcE^N_\ep(U^0), \mu_{\max}, h_{\min}^{-1}, a_0^{-1})$, $j=1,2$, are non-decreasing functions of their arguments. 
% and $a_0>0$ is given in Remark \ref{SectionJustification}.2.\ below.

Furthermore, if  there exists $U_{app}=(\zeta_{app},\psi_{app})^T \in E^N_{\ep, T/\ep}$ 
such that 
\begin{equation*}
% \inf_{t\in [0,T/\ep]}   \inf_{x\in\R^d} (1+\frac{\ep}{\sqrt{\mu}} \zetaa) 
% \inf_{t\in [0,T/\ep]}  (1-\ep |\zeta_{app}|_\infty)
1-\ep \sup_{t\in [0,T/\ep]} |\zeta_{app}(t)|_\infty
> 0 \qquad\text{and}\qquad \pl_t U_{app} + \mcN_{\ep, 0} (U_{app}) = (r^1,r^2)^T 
\end{equation*}
with $(r^1,\mfP r^2)\in L^\infty ([0,T/\ep]; H^N(\R^d)^2)$,
and a constant $c_{app}>0$ such that
\begin{equation*}
\sup_{t\in [0,T/\ep]} \mcE^N_\ep (U_{app}(t))\le c_{app},
\end{equation*}
then the error 
$\mfe = U -U_{app}$ satisfies for all $t\in [0,T/\ep]$
\begin{equation*}
\mcE^{N-1}_\ep (\mfe(t))^{1/2} \le C( 
% T, 
%%% IST IN LANNES BOOK UND IM BEWEIS NICHT DRIN; NOCHMAL CHECKEN WARUM ICH DAS REINGEMACHT HABE
%c_1,
%%% UNNOETIG, DA DAS GLEICHE WIE c_2
c_2, c_{app}) \Big(\mcE^{N-1}_\ep(\mfe(0))^{1/2} + t |(r^1,\mfP r^2)|_{L^\infty ([0,t]; H^N)} \Big)
\end{equation*}
with $C$ a non-decreasing function of its arguments.
\end{theorem}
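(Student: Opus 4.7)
My plan is to derive both statements essentially as corollaries of Alvarez-Samaniego--Lannes' well-posedness and stability theory as presented in \cite[Theorems 4.16, 4.18]{Lannes}, after checking that the hypotheses of the present setting match theirs.

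For the first (existence) part, I would apply \cite[Theorem 4.16]{Lannes} directly. The crucial observation is that in the non-dimensionalized form \eqref{wweTheorem} the nonlinearity carries an explicit prefactor $\ep$, so the standard quasilinear local-in-time theory yields an existence time of order $1/\ep$ rather than $O(1)$. The two scalar conditions in \eqref{initialdataconditions}---non-cavitation at the surface and the Rayleigh-Taylor stability criterion $\mathfrak{a}_\ep(U^0)\ge a_0>0$---are precisely those under which Lannes builds $\mcE^N_\ep$ to be coercive and propagates it along the flow. The deep-water restriction $1\le\mu\le \mu_{\max}$ makes the weight $\mfP = |D|/(1+\sqrt{\mu}|D|)^{1/2}$ behave as a homogeneous Fourier multiplier of order $1/2$ with uniform constants, so Lannes' estimates apply verbatim; the full transversality assumption enters when $d=2$ only through the regularity threshold $N\ge t_0+t_0\vee 2+3/2$.

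For the second (stability) part, I would subtract $\pl_t U_{app}+\mcN_{\ep,0}(U_{app})=(r^1,r^2)^T$ from the equation satisfied by $U$, obtaining for $\mfe=U-U_{app}$ the inhomogeneous quasilinear equation
\begin{equation*}
\pl_t \mfe + L_\ep[U,U_{app}]\,\mfe = -(r^1,r^2)^T,
\end{equation*}
where $L_\ep[U,U_{app}]$ denotes the linearization of $\mcN_{\ep,0}$ along the segment joining $U_{app}$ to $U$. The very same energy structure that yields local well-posedness for \eqref{wweTheorem} produces an a-priori estimate of the form
\begin{equation*}
\frac{d}{dt}\mcE^{N-1}_\ep(\mfe) \le C\ep\,\mcE^{N-1}_\ep(\mfe) + C\,|(r^1,\mfP r^2)|_{H^N}\,\mcE^{N-1}_\ep(\mfe)^{1/2},
\end{equation*}
with $C$ depending on $c_2$ and $c_{app}$. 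A Gronwall argument on $[0,T/\ep]$ then delivers the claimed bound, because the product $\ep\cdot(T/\ep)$ stays bounded while the residual contributes linearly in $t$.

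The main obstacle is the quasilinear and non-local nature of the Dirichlet-Neumann operator $\mcG[\ep\zeta]$, whose differentiation with respect to $\zeta$ formally loses one derivative, so that closing the top-order energy requires the Alinhac good unknown $\psi_{\ep,(\alpha)}=\pl^\alpha\psi-(w[\ep\zeta]\ep\psi)\pl^\alpha\zeta$---already encoded in the definition of $\mcE^N_\ep$ via \eqref{energyN}, \eqref{mfP}. Under this substitution the system differentiated by $\pl^\alpha$ becomes symmetrizable, with the Rayleigh-Taylor coefficient $\mathfrak{a}_\ep$ supplying the positive weight in the symmetrizer; the loss of one derivative from $N$ to $N-1$ in the stability bound is the standard price paid by the commutator estimates needed to pass from the nonlinear equation to its linearization. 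Since all of these ingredients are developed in detail in \cite{ASL08,Lannes} for the well-posedness theorem and transfer directly to the error equation, no analytical input beyond that of Alvarez-Samaniego and Lannes is required; the proof consists in verifying that the hypotheses on $U^0$, $U_{app}$ and $(r^1,r^2)$ are formulated so as to activate their framework.
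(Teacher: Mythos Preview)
Your proposal is correct and matches the paper's own treatment: Theorem~\ref{JustificationTheorem} is not proved from scratch here but is stated as an adaptation of \cite[Theorems~4.16, 4.18]{Lannes}, with Remark~\ref{SectionJustification}.3 explaining the renormalization from the shallow-water parameter $\nu=1$ to the deep-water regime $\nu\sim\mu^{-1/2}$ (valid because $\mu\le\mu_{\max}$). Your sketch of the underlying mechanism---the Alinhac good unknown, the Rayleigh--Taylor symmetrizer, and the Gronwall argument on $[0,T/\ep]$---is accurate and in fact more detailed than what the paper provides, but the essential content is the same: verify that the hypotheses activate the Alvarez-Samaniego--Lannes framework and quote their results.
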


{\sc Remark \ref{SectionJustification}.1.}\ 
We introduced the index $\ep$ in the notation of the energy norm, the corresponding spaces and $\mcN_{\ep,0}$, 
in order to point out their dependence on this parameter.
(Our justification result below uses the above theorem with $\ep=1$.) 
However, it is important to note that the time $T$ and the constants $c_j$, $j=1,2$, in the statement of the theorem are independent of 
$\ep\in (0,1]$, 
since $\mcE_\ep^N(U)$ is bounded with respect to $\ep$.
This is shown easily  
%which is shown easily 
 in the case 
%for
$U= (\zeta,\psi)^T \in H^{N+1} \times \dot H^{N+1}(\R^d)$. Indeed, from \eqref{mfP} we obtain
\begin{equation}\label{estimatemfP}
|\mfP u|_{H^s} \le |\nabla u|_{H^{s}} \quad \forall\ s\in \R,\ \mu\ge 0,
\end{equation}
and hence, with  Lemma \ref{ProductEstimates} (1) and (4), and \eqref{mcGestimate} for $\mu\ge 1$ and  $t_0\ge 1$
% MAN SOLLTE AUCH DORT VERWENDEN, DASS $\mu\ge 1$
\begin{equation*}
\big|\mfP \big((w[\ep\zeta]\psi) \pl^\alpha\zeta\big)\big|_2 \le C(h_{\min}^{-1},\mu_{\max}, |\ep \zeta|_{H^{t_0+1}})  |\nabla\psi|_{H^{t_0}} |\pl^\alpha\zeta|_{H^1}.
% \qquad t_0\ge 1.
\end{equation*}
This yields in particular, 
\begin{equation*}
\mcE^N_\ep (U) = \mcE^N_0 (U) + O(\ep)\quad\text{as $\ep\to0$} 
\end{equation*} 
but also 
\begin{equation}\label{energyestimate}
% \mcE^N_\ep (U)  \le C \left( h_{\min}^{-1} ,  \mu_{\max} , | \ep \zeta|_{H^{t_0 + 1}} ,|\ep \nabla \psi |_{H^{t_0}} \right) ( |\zeta |_{H^{N+1}}^2 + |\nabla\psi|_{H^N}^2) 
\mcE^N_\ep (U)  \le C \left( h_{\min}^{-1} ,  \mu_{\max} , | \ep 
%( \zeta, \psi) 
U 
|_{H^{t_0 + 1} \times \dot H^{t_0+1}} \right) |
%(\zeta,\psi)
U
 |_{H^{N+1}\times \dot H^{N+1}}^2 
%, \quad N\ge t_0+3/2.
\end{equation}
for $N\ge t_0+3/2$. 
\hfill$\square$

\

%%%%%%%%%%%%%%%%%%%%%%%%%%%%%%%%%%%%%%%%%

{\sc Remark \ref{SectionJustification}.2.}\ 
The first condition in \eqref{initialdataconditions} implies that the water height never vanishes. 
(Recall that $\zeta^0\in H^{t_0}(\R^d) \subset C\cap L^{\infty}(\R^d)$ for $t_0>d/2$.) 
%From a physical point of view this condition is always given in the dispersive deep water case considered here.
%Mathematically 
For given $\zeta^0$, it is uniformly satisfied for $0<\ep\le \ep_0$, with $0<\ep_0\le 1$, if $1 - \ep_0 |\zeta^0|_{\infty} \ge h_{\min}>0$.
However, note that $h_{\min}>0$ influences the existence time $T>0$ of solutions to the water waves problem. 

Moreover, the well-posedness of the water-waves problem and the existence time of its solution depend crucially on the validity of the (strict-)hyperbolicity condition 
\begin{equation}\label{Levycondition}
\mathfrak{a}_\ep (U^0)  = \mathfrak{a}_1 (\ep U^0) = 1 -   \mathfrak{b}_1 (\ep U^0)  \ge a_0 >0
\end{equation}
with
%\begin{align*}
%\mathfrak{a}_\ep(U) 
%& = 1 - \ep w [\ep \zeta] \mcN_{\ep,0}^2(U) - \ep d_\zeta  w (\mcN_{\ep, 0}^1(U) )\psi 
%+ \ep^2 \left(\nabla\psi - \ep (w[\ep\zeta] \psi)\nabla\zeta\right)  \cdot \nabla ( w[\ep\zeta] \psi)
%\\
%& = 1 - w [\ep \zeta] \mcN_{1,0}^2(\ep U) 
%% - d_\zeta  w (\mcN_{\ep, 0}^1(U) ) \ep \psi 
%- d_{\ep\zeta}  w (\mcN_{1, 0}^1(\ep U) ) \ep \psi 
%+  \left(\nabla \ep \psi - (w[\ep\zeta] \ep \psi)\nabla \ep \zeta\right)  \cdot \nabla ( w[\ep\zeta] \ep \psi)
%\end{align*} 
\begin{multline*}
\mathfrak{b}_1( U) 
 = 
 % 1 - 
 w [\zeta] \mcN_{1,0}^2(U) 
%+
-  \big(\nabla\psi - (w[\zeta] \psi)\nabla \zeta\big)  \cdot \nabla ( w[\zeta] \psi)
\\ % & 
% - d_{\zeta}  w (\mcN_{1, 0}^1( U) )\psi 
% -
 + \frac{ \mcG[\zeta] \big( ( w[\zeta] \psi)   \mcG[\zeta]\psi \big)  
+  ( \mcG[\zeta] \psi) \nabla\cdot  \big(\nabla\psi - (w[\zeta] \psi)\nabla \zeta\big) 
+ (\nabla\zeta \cdot \nabla  \mcG[\zeta] \psi ) w[\zeta] \psi   }{1 + |\nabla\zeta|^2},
\end{multline*} 
%with $(\mcN_{\ep,0}^1(U), \mcN_{\ep,0}^2(U))^T = \mcN_{\ep,0}(U)$ given in \eqref{wweTheorem}, 
%and where $d_\zeta w(h)\psi$ is the derivative of $w[\ep \cdot]\psi$  at $\zeta$ in the direction $h$, see \eqref{mfP}.
%
where $ \mcN_{1,0}^2(U) $ is the second component of $\mcN_{1,0}(U)$ given in \eqref{wweTheorem}.

Using the estimates
\begin{equation*}
| \mcG[\zeta] \psi|_{H^{s}} , \  | w[\zeta] \psi|_{H^{s}} \le C(h_{\min}^{-1},\mu_{\max}, |\zeta|_{H^{s\vee t_0+1}}) |\nabla \psi|_{H^s}, 
\qquad s\ge 0,
\end{equation*}
which follow from \eqref{mcGestimate} and Lemma \ref{ProductEstimates} (1) and (4),
we obtain 
\begin{align*}
|\mathfrak{b}_1( U) |_\infty 
& \le C |\mathfrak{b}_1( U) |_{H^{t_0}} 
%\\
%& \le 
%% C(h_{\min}^{-1},\mu_{\max}, |\zeta|_{H^{t_0+1}})  | \mcN_{1,0}^2(U) |_{H^{t_0+1}}
%% C(h_{\min}^{-1},\mu_{\max}, |\zeta|_{H^{t_0+1}}) 
%C(t_0+1) ( | \zeta |_{H^{t_0+1}} +   (1 + C(t_0+2) ) |\nabla\psi |_{H^{t_0+1}}^2 )
%\\&
%% + ( |\nabla\psi|_{H^{t_0}} +  C(t_0+1) |\nabla \psi|_{H^{t_0}}|\zeta|_{H^{t_0+1}}) C(t_0+2) |\nabla \psi|_{H^{t_0+1}}
%+ (1+  C(t_0+1) )  |\nabla\psi|_{H^{t_0}}  C(t_0+2) |\nabla \psi|_{H^{t_0+1}}
%\\&
%+ C(|\zeta|_{H^{t_0+1}} )( C(t_0+1) C(t_0+2) |\nabla \psi  |_{H^{t_0+1}}C(t_0+2) |\nabla \psi  |_{H^{t_0+1}} 
%\\&
%+  C(t_0+1) |\nabla \psi  |_{H^{t_0}} ( 1+ C(t_0+2) ) | \nabla\psi  |_{H^{t_0+1}} 
%\\&
%+ C(t_0+2) |\nabla \psi  |_{H^{t_0+1}} C(t_0+1) |\nabla \psi  |_{H^{t_0}} )
%\\
%& \le 
%% C(h_{\min}^{-1},\mu_{\max}, |\zeta|_{H^{t_0+1}}) 
%%C(t_0+1)  
%%| \zeta |_{H^{t_0+1}} +  C(t_0+2) |\nabla\psi |_{H^{t_0+1}}^2 
% C(h_{\min}^{-1},\mu_{\max}, |\zeta|_{H^{t_0+1}})   | \zeta |_{H^{t_0+1}} 
% +   C(h_{\min}^{-1},\mu_{\max}, |\zeta|_{H^{t_0+2}})  |\nabla\psi |_{H^{t_0+1}}^2 
%\\
%& 
\le 
 C(h_{\min}^{-1},\mu_{\max}, |\zeta|_{H^{t_0+2}})  ( | \zeta |_{H^{t_0+1}} +   |\nabla\psi |_{H^{t_0+1}}^2 ) ,
\end{align*}
and, hence,
\begin{equation*}%\label{Levycondition}
\mathfrak{a}_\ep (U^0)  
\ge 1 -  \ep C(h_{\min}^{-1},\mu_{\max}, |\zeta^0|_{H^{t_0+2}})  ( | \zeta^0 |_{H^{t_0+1}} +  \ep  |\nabla\psi^0 |_{H^{t_0+1}}^2 ) \ge a_0 >0.
\end{equation*}
Thus, also here, for given $U^0$, there exists an $\ep_0\le 1$, such that the second condition in \eqref{initialdataconditions}
holds true uniformly for $0\le \ep \le \ep_0$. 
However, the constant $a_0>0$ influences the existence time $T>0$ of the solution of \eqref{wweTheorem}.
%
%As proved by Lannes in \cite[Proposition 4.32]{Lannes}, condition \eqref{Levycondition} is always satisfied in the case of finite depth with flat bottom considered in the present article for $U^0\in E_{\ep,0}^N$ with \eqref{initialdataconditions}. 
%%
For more information on the definition, the r\^ole, and the properties of $\mathfrak{a}_\ep (U)$ we refer the reader to 
\cite[\S\S\ 4.2.3, 4.3.1, 4.3.5]{Lannes}.
\hfill$\square$

\

{\sc Remark \ref{SectionJustification}.3.}\ 
As mentioned above, Theorem \ref{JustificationTheorem} is an adaptation of  \cite[Th.\ 4.16, 4.18]{Lannes}. 
There, the main focus are applications to shallow water theory, which corresponds to $\nu \sim 1$
for the parameter $\nu = \tanh (2\pi \sqrt{\mu}) / (2\pi\sqrt{\mu})$
 in the general nondimensionalized form for the water waves problem set up in \cite{Lannes, ASL08}, viz.\
\begin{align*}%\label{wweTheoremNu}
\partial_t U + \tilde \mcN_\nu (U)  = 0,\qquad
\tilde \mcN_\nu (U) & = \begin{pmatrix}  - \frac1{\mu \nu} \mcG [ \ve \zeta ] \psi 
\\
 \zeta + \frac\ve{2\nu} |\nabla\psi|^2 - \frac\ve{2\mu \nu } \frac{( \mcG[ \varepsilon\zeta] \psi + \ve\mu \nabla\zeta\cdot \nabla \psi)^2}{1+\ve^2\mu |\nabla\zeta|^2}
 \end{pmatrix},
\end{align*} 
and hence Theorems 4.16, 4.18 in \cite{Lannes} are formulated for the case $\nu=1$.
However, these results, \emph{as well as their method of proof}, 
% as explained there (see \cite[Rem.\ 8.7]{Lannes}), Theorem \ref{JustificationNu=1} 
hold true also for \emph{deep water of finite depth}, $\nu\sim (2\pi\sqrt{\mu})^{-1}$ or
(after suitable renormalization of the equations, see \cite[Ch.\ 4, fn.\ 9]{Lannes}),  equivalently, $\nu\sim \mu^{-1/2}$, leading to 
\begin{align*}%\label{wweTheoremNu}
\tilde \mcN_{\frac1{\sqrt{\mu}}} (U) & = \begin{pmatrix}  - \frac1{\sqrt{\mu}} \mcG [ \frac{\ep}{\sqrt{\mu}} \zeta] \psi 
\\
 \zeta + \frac\ep{2} |\nabla\psi|^2 - \frac\ep{2} \frac{( \frac{1}{\sqrt{\mu}}\mcG[ \frac\ep{\sqrt{\mu}}\zeta] \psi + \ep \nabla\zeta\cdot \nabla \psi)^2}{1+\ep^2 |\nabla\zeta|^2}
 \end{pmatrix}
 = \mcN_{\ep,0}(U), 
 \qquad  \ep = \ve \sqrt{\mu},
\end{align*} 
provided  \emph{$\mu$ is bounded}, since then, $\nu=\tanh (2\pi \sqrt{\mu}) / (2\pi\sqrt{\mu})$ remains of order $\O(1)$ with respect to 
$\mu$ (see \cite[Remark 8.7]{Lannes}).
%Here, $\mcN_0(U)$ equals $\mcN_\sigma(U)$ in \eqref{wwe} in the case without surface tension, $\sigma=0$,
%and Theorem  \ref{JustificationNu=1} holds true verbatim for the water waves problem \eqref{wwe} when $\ve$ is replaced by $\ep$ with $0<\ep\le \ep_0$, $\ep_0$ being an additional argument  of the constants $c_j$. 
%\hfill$\square$
%
Nevertheless, an analogous result holds true if $\ep = \eps \sqrt{\mu}$ is bounded (even when $\mu\to\infty$), 
although the proof for this needs a different approach (see \cite[\S 4.4.3]{Lannes} and \cite{ASL08, ASL08b}).
However, since here we are interested in the case of finite depth $\sqrt{\mu}<\infty$, 
we use the well-posedness result as presented above,
and hence in our justification result we can not simply take the limit 
%  and hence the obtained results are not suitable for 
$\mu\to\infty$.

\

Since \eqref{wweTheorem} is equivalent to $\partial_t (\ep U) +  \mcN_{1,0} (\ep U)  = 0$ 
(note the index $\ep =1$ in $\mcN_{1,0}$) 
with $\mcE^N_1(\ep U) = \ep^2 \mcE^N_\ep(U)$ and $\mathfrak{a}_1 (\ep U) = \mathfrak{a}_\ep (U) $,
% IST DAS HILFREICH ODER EIN GEGENARGUMENT? ANDERS: STIMMT DAS JETZT SO, ODER NICHT?
we realize that the solution $U$ in the first part of the theorem corresponds to the unique solution $\ep U \in E^N_{1,T/\ep}$ of the initial value problem 
\begin{equation*}
\partial_t (\ep U) +  \mcN_{1,0} (\ep U)  = 0, \quad \ep U(0) = \ep U^0 \in E^N_{1,0}
\end{equation*}
with $T,c_2$ as in the theorem, i.e.\ 
%where $T$ depends on $U^0$ but is 
independent of $\ep\in (0,1]$. 
%
% IST DAS JETZT DAS GLEICHE MIT ODER DAS GEGETEILIGE VON COROLLARY 4.22 ? (ICH GLAUBE EHER DAS GEGENTEIL) 

Moreover, 
%setting $U_{app} = \ep U_a$ with $U_a$ as in \eqref{Uappfinal} we obtain 
from \eqref{UaConsistency} we obtain
\begin{equation*} % \label{UappConsistency}
% \partial_t U_a +\mcN_{\ep,\sigma} (U_a)  = \ep^3 ( r_2^1 , r_2^2 )^T, 
\partial_t (\ep U_a) +\mcN_{1,0} (\ep U_a)  = \ep^4 ( r_2^1 , r_2^2 )^T. 
\end{equation*}
Assuming that the estimates
\begin{equation}\label{macroest1}
1 - \ep \sup_{t\in [0,T_0/\ep]} |\zetaa(t)|_\infty > 0 \quad \text{$\forall$ $\ep\le\ep_0$ with some $\ep_0\le 1$}
\end{equation}
and
\begin{equation}\label{macroest23}
\sup_{t\in [0,T_0/\ep]} \mcE^N_1 (\ep U_a (t))\le 
%c_{app},
c_a,
\qquad
|(r_2^1,\mfP r_2^2)|_{L^\infty ([0,T_0/\ep]; H^N)} \le \ep^{-d/2} 
%c_r
c_a
\end{equation} 
are satisfied (with $T_0, 
%c_{app}, c_r
c_a
 >0 $ depending only on $U_a$),
% Postponing for the moment the proof of the three last estimates, 
we obtain from the stability part of the 
%Theorem \ref{JustificationTheorem} 
theorem, in its version for $\ep=1$,  
the estimate for the error $\mfe= \ep U - \ep U_a$
%\begin{multline*}
%\mcE^{N-1}(\ep ( U  - U_a)(t) )^{1/2} 
%\\
% \le C(T, c_1, c_2, C_{app},\ep_0) (\mcE^{N-1}(\ep ( U  - U_a)(0) )^{1/2} + t \ep^{4 - d/2} C_r )
%\quad \text{for $\ep t\le T_\ast = \min\{T,T_0\}$} 
%\end{multline*}
\begin{align}\label{errorest1}
\mcE^{N-1}_1 \big(\mfe(t) \big)^{1/2} 
 \le C( 
 %T,
%%% IST IN LANNES BOOK UND IM BEWEIS NICHT DRIN; NOCHMAL CHECKEN WARUM ICH DAS REINGEMACHT HABE
 % c_1, 
%%% UNNOETIG, DA DAS GLEICHE WIE c_2
 c_2, 
 % c_{app}
 c_a
 %,\ep_0
 %%% HAENGT NICHT DAVON AB; ZUR NOT SETZT MAN ES GLEICH 1
 ) \Big(\mcE^{N-1}_1\big(\mfe(0)\big)^{1/2} + t \ep^{4 - d/2} 
 % c_r 
 c_a 
 \Big)
%\quad \text{for $t\le T_\ast/\ep$,} 
%\quad \text{for $\ep t\le T_\ast = \min\{T,T_0\}$} 
\end{align}
for $\ep\le \ep_0$ and $t\le T_\ast/\ep$ with $T_\ast = \min\{T,T_0\}$ and the $T, 
%c_1, 
%%% UNNOETIG, DA DAS GLEICHE WIE c_2
c_2$ of Theorem \ref{JustificationTheorem}.  

\

We derive now sufficient conditions on the approximation $U_a$ given by \eqref{Uappfinal} and on the residuals $r_2^1$, $r_2^2$ given by \eqref{r21}, such that the estimates \eqref{macroest1}, \eqref{macroest23} are satisfied. 
From the form of $U_a$ and of the residuals it is clear that actually we need conditions on the macroscopic functions comprising them. 
As these functions are determined through classical partial differential equations, we prefer the conditions to be expressed 
rather in terms of  $|\cdot|_{H^s}$-norms than in terms of the energy norm $\mcE^N_1$ or $|\mfP\cdot|_{H^s}$. 

Denoting with $\tilde\zetaa$, $\tilde\psia$ the vectors of all macroscopic functions of $\zetaa$, $\psia$, respectively,  
% (as in \eqref{formu}) 
%and with $\xi$ the vector 
%%of wave vectors 
%$(\xi_1,\xi_2,\xi_3)$, 
we obtain from \eqref{energyestimate} and \eqref{ModulationEstimate} 
\begin{align} \label{mcEN1epUa}
\mcE^N_1(\ep U_a)  
& \le C \left( h_{\min}^{-1} ,  \mu_{\max} , | \ep \zetaa|_{H^{t_0 + 1}} ,|\ep \nabla \psia |_{H^{t_0}} \right) ( |\ep\zetaa |_{H^{N+1}}^2 + |\ep\nabla\psia|_{H^N}^2) 
\displaybreak[0] \\ \notag
& \le C \left( h_{\min}^{-1} ,  \mu_{\max} , | \tilde \zetaa|_{H^{t_0 + 1}} ,|\tilde \psia |_{H^{t_0+1}}, %\ep_0, 
|\xi_j| \right) ( |\tilde\zetaa |_{H^{N+1}}^2 + |\tilde\psia|_{H^{N+1}}^2)
%, \quad N\ge t_0+3/2.
\end{align}
for $\ep\le 1$, and, analogously, 
% KOENNTE MAN VIELLEICHT BESSER ERKLAEREN, ABER IST OK SO
% Analogously, we obtain 
for \eqref{r21}, recalling \eqref{estimateR21}, \eqref{estimateR22},
%from 
and with \eqref{estimatemfP},
%\eqref{ModulationEstimate} 
\begin{multline}\label{resepUa}
\ep^{ d/2}   
 |(r_2^1,\mfP r_2^2)|_{H^N} 
%\le |(r_2^1, \nabla r_2^2)|_{H^N} ,
%\le C( |(r_2^1|_{H^N} + |\nabla r_2^2)|_{H^N} ) 
%\le  &  \ep^{-d/2}  C(|\xi|, | \pl_t' \tilde \zeta_{2j} |_{H^N} , | \pl_t' \tilde \zeta_{2ji} |_{H^N} , | \pl_t'  \tilde \zeta_{2jik} |_{H^N}, | \pl_t' \tilde \zeta_{20} |_{H^N})
%\\ & +  \ep^{-d/2}  C(|\xi|,  | \pl_t' \tilde \psi_{2ji} |_{H^{N+1}},   | \pl_t' \tilde  \psi_{2jik} |_{H^{N+1}} )
%\\ & + \ep^{-d/2}  M(N+3,\tilde\zetaa) C \big( | \tilde \psi_0|_{H^{N+4}},  | \tilde \psi_1|_{H^{N+3}},  |\tilde \psi_2|_{H^{N+2}} \big) 
\le   
%\ep^{-d/2}  
C(h_{\min}^{-1} ,  \mu_{\max} , 
%\ep_0, 
|\xi_j|, |\tilde \zetaa|_{H^{N+3}},  | \tilde \psi_0|_{H^{N+4}},  | \tilde \psi_1|_{H^{N+3}},  
% \\
|\tilde \psi_2|_{H^{N+2}}, 
\\
% |\tilde \zetaa|_{H^{N+3}}, 
 | \pl_t' \tilde \zeta_{2j} |_{H^N} , | \pl_t' \tilde \zeta_{2ji} |_{H^N} , | \pl_t'  \tilde \zeta_{2jik} |_{H^N}, | \pl_t' \tilde \zeta_{20} |_{H^N}, 
| \pl_t' \tilde \psi_{2ji} |_{H^{N+1}},   | \pl_t' \tilde  \psi_{2jik} |_{H^{N+1}} ).
\end{multline} 
% FUER WELCHE N, FUER WELCHE t ? 

By a careful count of derivatives in the formulas for the macroscopic functions of $U_a$ 
%\eqref{Uappfinal} 
as obtained in Section \ref{SectionFormalDerivation}, we obtain from standard results of qualitative theory  (see, e.g., \cite[\S 2.1, \S 7.2]{Evans2})  for the linear homogeneous and inhomogeneous transport equations and the linear inhomogeneous wave equation of the system \eqref{mainsystem}  
that, for initial data of the form 
\begin{equation*}
\psi_{0j}^0 \in H^{s+4}(\R^d), 
%\qquad \psi_{00}^0=\pl_t' \psi_{00}^0=\psi_{1j}=0
\quad
%\psi_{00}^0 \in H^{s+3}(\R^d), 
%\ 
%\pl_t' \psi_{00}^0 \in H^{s+2}(\R^d),
%\quad 
( \psi_{00}^0, \pl_t' \psi_{00}^0 ) \in H^{s+3}\times H^{s+2} (\R^d), 
\quad 
\psi_{1j}^0 \in H^{s+2}(\R^d)
%\quad  s\in \N,\ s\ge t_0\vee 2,
\end{equation*}
% with $s\in \N$, $s\ge t_0\vee 2$,
with $s\in \N$, $s\ge 2 > t_0 = 3/2$,
we have for every $T_0>0$ and $t'\le T_0$ the estimates 
%\begin{align*}
%|\tilde U_2 |_{H^s} & \le C(|\xi|, |\psi_{0j}|_{H^{s+2}}, |\psi_{1j}|_{H^{s+1}}, |\pl_t' \psi_{00}|_{H^{s}}
%%,  |\pl_t' \psi_{1ji}|_{H^{s}}, |\pl_t' \zeta_{1ji}|_{H^{s}} 
%)
%\\
%|\tilde U_1 |_{H^s} & \le C(|\xi|, |\psi_{0j}|_{H^{s+1}}, |\psi_{1j}|_{H^{s}}, |\pl_t' \psi_{00}|_{H^{s}})
%\\
%|\tilde U_0 |_{H^s} & \le C(|\xi|, |\psi_{0j}|_{H^{s}}, |\psi_{00}|_{H^{s}})
%\\
%|\pl_t' \tilde U_{2ji,2jik} |_{H^s} & \le C(|\xi|, |\psi_{0j}|_{H^{s+2}}, |\pl_t' \psi_{1j}|_{H^{s}})
%\\
%|\pl_t' \tilde \zeta_{20} |_{H^s} & \le C(|\xi|,  |\psi_{0j}|_{H^{s+2}}, |\pl_t' \psi_{1j}|_{H^{s}} )
%\\
%|\pl_t' \tilde \zeta_{2j} |_{H^s} & \le C(|\xi|, |\psi_{0j}|_{H^{s+3}},  |\pl_t' \nabla' \psi_{1j}|_{H^{s}} , |\pl_t' \pl_t' \psi_{00}|_{H^{s}} )
%\\
%|\psi_{1j} |_{H^s} & \le C(|\xi|,   |E_j|_{H^s})
%\\
%|\pl_t' \tilde \psi_{1j} |_{H^s} & \le C(|\xi|, |\psi_{1j} |_{H^{s+1}},   |E_j|_{H^{s+1}} )
%\\
%|\pl_t' \nabla' \tilde \psi_{1j} |_{H^s} & \le C(|\xi|, |\psi_{1j} |_{H^{s+2}},   |E_j|_{H^{s+2}} )
%\\
%|E_j |_{H^s} & \le C(|\xi|,  |\psi_{0j}|_{H^{s+2}}, |\pl_t' \psi_{00}|_{H^{s}} 
%% , |\nabla' \psi_{00}|_{H^{s}} 
%, |\psi_{00}|_{H^{s+1}} 
%)
%\end{align*}
%\begin{align*}
%| (\tilde\zeta_0,\tilde\psi_0)  |_{H^s} & \le C( T_0, \mu_{\max}, |\xi|, |\psi_{0j}^0|_{H^{s}},  |( \psi_{00}^0, \pl_t' \psi_{00}^0 )|_{H^{s} \times H^{s-1}} ) ,
%\\
%|(\tilde\zeta_1,\tilde\psi_1)  |_{H^s} & \le C( T_0, \mu_{\max}, |\xi|, |\psi_{0j}^0|_{H^{s+2}},  |( \psi_{00}^0, \pl_t' \psi_{00}^0 )|_{H^{s+1} \times H^{s}}, |\psi_{1j}^0|_{H^{s}}),
%\end{align*}
\begin{align*}
| (\tilde\zeta_0,\tilde\psi_0)  |_{H^s} & \le C( T_0, \mu_{\max}, |\xi_j|, |\psi_{0j}^0|_{H^{s}},  |( \psi_{00}^0, \pl_t' \psi_{00}^0 )|_{H^{s} \times H^{s-1}} ) ,
\end{align*}
\begin{align*}
|(\tilde\zeta_1,\tilde\psi_1)  |_{H^s} & \le C( T_0, \mu_{\max}, |\xi_j|, |\psi_{0j}^0|_{H^{s+2}},  |( \psi_{00}^0, \pl_t' \psi_{00}^0 )|_{H^{s+1} \times H^{s}}, |\psi_{1j}^0|_{H^{s}}),
\end{align*}
\begin{multline*}
|(\tilde\zeta_2,\tilde\psi_2) |_{H^s} , 
| \pl_t' \tilde \zeta_{2ji} |_{H^s} , | \pl_t'  \tilde \zeta_{2jik} |_{H^s}, | \pl_t' \tilde \zeta_{20} |_{H^s}, 
| \pl_t' \tilde \psi_{2ji} |_{H^{s}},   | \pl_t' \tilde  \psi_{2jik} |_{H^{s}} 
\\
\le C( T_0, \mu_{\max},  |\xi_j|, |\psi_{0j}^0|_{H^{s+3}},  |( \psi_{00}^0, \pl_t' \psi_{00}^0 )|_{H^{s+2} \times H^{s+1}} , |\psi_{1j}^0|_{H^{s+1}} ), 
\end{multline*}
\begin{align*}
|\pl_t' \tilde \zeta_{2j} |_{H^s} & \le C(T_0, \mu_{\max}, |\xi_j|,
|\psi_{0j}^0|_{H^{s+4}},  |( \psi_{00}^0 , \pl_t' \psi_{00}^0) |_{ H^{s+3} \times H^{s+2}},  |\psi_{1j}^0|_{H^{s+2}} ).
\end{align*}

Hence, we obtain from \eqref{mcEN1epUa} and \eqref{resepUa} that,  for initial data
\begin{equation}\label{macroinitialdata}
\psi_{0j}^0 \in H^{N+6}(\R^d),
\quad
( \psi_{00}^0, \pl_t' \psi_{00}^0 ) \in H^{N+5}\times H^{N+4} (\R^d), 
\quad 
\psi_{1j}^0 \in H^{N+4}(\R^d)
\end{equation}
with $N\in\N$ as in Theorem \ref{JustificationTheorem},
% with $s\in \N$, $s\ge t_0\vee 2$,
the approximation $U_a$ of \eqref{Uappfinal} 
%with the macroscopic functions determined as in Section \ref{SectionFormalDerivation}, and in particular by \eqref{mainsystem},
satisfies for every $T_0>0$ the assumptions \eqref{macroest23}
with 
%$c_{app} = c_r = c_a$, where 
\begin{equation*}%\label{ca}
c_a = C ( T_0,   h_{\min}^{-1} , \mu_{\max}, |\xi_j|, 
%\ep_0,  
%%% WE SET THIS =1
|\psi_{0j}^0|_{H^{N+6}},  |( \psi_{00}^0 , \pl_t' \psi_{00}^0) |_{ H^{N+5} \times H^{N+4}},  |\psi_{1j}^0|_{H^{N+4}} ),
\end{equation*}
% Moreover, since $|\zetaa(t)|_\infty  \le  C |\tilde\zetaa(t')|_\infty \le C |\tilde\zetaa(t')|_{H^{t_0}} \le c_a$,
% we can choose for every $T_0>0$ some $0<\ep_0\le 1$ such that \eqref{macroest1} is satisfied for all $\ep\le \ep_0$.
%
where, since $|\zetaa(t)|_\infty  \le  C |\tilde\zetaa(t')|_\infty \le C |\tilde\zetaa(t')|_{H^{t_0}}$,
we can find for every $T_0>0$ some $0<\ep_0\le 1$ such that \eqref{macroest1} is satisfied uniformly for  $\ep\le \ep_0$, with $h_{\min}>0$ as a lower bound. 
Thus, for the initial data \eqref{macroinitialdata} and sufficiently small $\ep_0$, the error estimate \eqref{errorest1} holds indeed true. 

If one wants to reformulate the  estimate \eqref{errorest1} in terms of Sobolev norms, one needs to estimate its right-hand side from above 
and its left-hand side from below by such norms. 
To this end, 
% Furthermore, 
since
\begin{equation}\label{error}
%\mfe  = \ep U - \ep U_a = \ep U - \ep U_{a,1} - \ep^3 U_2  = \mfe_1 - \ep^3 U_2, \qquad U_2 = (\zeta_2,\psi_2)^T, 
\mfe  = \ep U - \ep U_a = \ep U - \ep U_{a,1} - \ep^3 (\zeta_2,\psi_2)^T,  
\end{equation}
we obtain from \eqref{energyestimate} for $\ep=1$ and $U^0 = (\zeta^0,\psi^0)^T \in H^{N+1}\times \dot H^{N+1}(\R^d)$
\begin{equation*} 
% \mcE^N_\ep (U)  \le C \left( h_{\min}^{-1} ,  \mu_{\max} , | \ep ( \zeta, \psi) |_{H^{t_0 + 1} \times \dot H^{t_0+1}} \right)
%  |(\zeta,\psi) |_{H^{N+1}\times \dot H^{N+1}}^2 
% \mcE^{N-1}_1 (\mfe(0))  \le C \left( h_{\min}^{-1} ,  \mu_{\max} , | \mfe(0) |_{H^{t_0 + 1} \times \dot H^{t_0+1}} \right) 
% |\mfe(0)|_{H^{N}\times \dot H^{N}}^2 
% \mcE^{N-1}_1 (\mfe(0))  \le C \left( c_2, c_a \right)  ( |\mfe_1(0)|_{H^{N}\times \dot H^{N}}^2  + |\ep^3 U_2|_{H^{N}\times \dot H^{N}}^2  )
% \mcE^{N-1}_1 (\mfe(0))  \le C \left( c_2, c_a \right)  ( | \ep U^0 - \ep U_{a,1}^0|_{H^{N}\times \dot H^{N}}^2  + |\ep^3 (\zeta_2,\psi_2)|_{H^{N}\times \dot H^{N}}^2  )
%\mcE^{N-1}_1 (\mfe(0))^{1/2}  \le C \left( c_2, c_a \right)  
% ( | \ep U^0 - \ep U_{a,1}^0|_{H^{N}\times \dot H^{N}} + \ep^{3-d/2}|(\tilde \zeta_2, \tilde \psi_2)|_{H^{N}\times \dot H^{N}})
\mcE^{N-1}_1 (\mfe(0))^{1/2}  \le C ( c_2, c_a )  \Big( | \ep U^0 - \ep U_{a,1}(0,\cdot)|_{H^{N}\times \dot H^{N}} + \ep^{3-d/2} c_a \Big)
\end{equation*}
with $c_2$ as in Theorem \ref{JustificationTheorem} and 
% the $c_a$ of \eqref{ca}.
$c_a$ as above.
%
% Finally, 
Moreover, 
since we get from 
%the definition 
\eqref{mfP} 
%of $\mfP$ 
%we get
\begin{equation} \label{estimatemfP1}
|\nabla u|_{H^{s}} \le (1+\sqrt{\mu})^{1/2}|\mfP u|_{H^{s+1/2}} , \quad  s\in \R,\quad \mu\ge 0,
\end{equation}
and
\begin{equation}\label{Pmu_above} 
|\mfP u|_{H^s} \le \max\{1,\mu^{-1/4}\} |\nabla u|_{H^{s - 1/2}} \le   |u|_{H^{s+1/2}} , \quad  s\in \R,\quad \mu\ge 1,
\end{equation}
we obtain from \eqref{estimatemfP}, \eqref{energyN} and Lemma \ref{ProductEstimates}(1)
\begin{align*} 
|\nabla\psi|_{H^{N-1}} 
& \le C(\mu_{\max}) |\mfP\psi|_{H^{N-1/2}}
%\\&
%\le C(\mu_{\max}) \sum_{|\alpha| \le N-1} 
% \Big( | \mfP \psi_{\ep, (\alpha)} |_{H^{1/2}} + \big|\mfP \big( \ep (w[\ep\zeta]\psi) \pl^\alpha\zeta\big)\big|_{H^{1/2}} \Big)
\displaybreak[0] \\&
 \le C(\mu_{\max}) \sum_{|\alpha| \le N-1} 
 \Big( | \mfP \psi_{\ep, (\alpha)} |_{H^1} + \big|\mfP \big( (w[\ep\zeta]\ep\psi) \pl^\alpha\zeta\big)\big|_{H^{1/2}} \Big)
\displaybreak[0] \\&
 \le C(\mu_{\max}) \Big( \sum_{|\alpha| \le N} | \mfP \psi_{\ep, (\alpha)} |_2
 + \sum_{|\alpha| \le N-1} \sum_{|\beta| =1} \big|\mfP \big( \big(\pl^\beta ( w[\ep\zeta] \ep \psi  ) \big) \pl^\alpha\zeta\big)\big|_2
 \\& \phantom{\le C(\mu_{\max}) \Big( \ }
 + \sum_{|\alpha| \le N-1}  \big|  (w[\ep\zeta] \ep \psi) \pl^\alpha\zeta \big|_{H^1} \Big),
\displaybreak[0] \\&
 \le C(\mu_{\max}) \Big( \mcE_\ep^N (\zeta,\psi)^{1/2}
 + \sum_{|\alpha| \le N-1} \sum_{|\beta| =1} \big|\big( \pl^\beta ( w[\ep\zeta]\ep\psi ) \big) \pl^\alpha\zeta \big|_{H^1}
 \\& \phantom{\le C(\mu_{\max}) \Big( \ }
 + \sum_{|\alpha| \le N-1} \big| w[\ep\zeta] \ep \psi\big|_{H^{t_0}} |\pl^\alpha\zeta|_{H^1} \Big),
%\\&
% \le C(\mu_{\max}) \Big( \sum_{|\beta| =1} \big| \ep \pl^\beta ( w[\ep\zeta]\psi ) \big|_{H^{t_0}}
%  + \big| \ep w[\ep\zeta]\psi\big|_{H^{t_0}} \Big) \mcE_\ep^N (\zeta,\psi)^{1/2},
\displaybreak[0] \\&
 \le C(\mu_{\max})  \big| w[\ep\zeta] (\ep \psi)  \big|_{H^{t_0+1}}  \mcE_\ep^N (\zeta,\psi)^{1/2},
\end{align*}
and, hence, from Lemma \ref{ProductEstimates}~(4)~and~(3), and \eqref{mcGestimate}, \eqref{estimatemfP1}
\begin{align*}
|\nabla\psi|_{H^{N-1}} 
 \le C( h_{\min}^{-1}, \mu_{\max}, | \ep \zeta|_{H^{t_0 + 2}}, |\mfP ( \ep \psi) |_{H^{t_0+3/2}}  ) \mcE_\ep^N (\zeta,\psi)^{1/2}.
\end{align*}
Since, obviously, also $|\zeta|_{H^N} \le \mcE_\ep^N(\zeta,\psi)^{1/2}$, we obtain for $U=(\zeta,\psi)^T$
\begin{align}\label{energy_above}  
|U|_{H^N\times\dot H^N} 
\le C( h_{\min}^{-1}, \mu_{\max}, | \ep \zeta|_{H^{t_0 + 2}}, |\mfP ( \ep \psi) |_{H^{t_0+3/2}}  ) \mcE_\ep^N (U)^{1/2},
\end{align}
and, in particular, for $\ep=1$ and the error $\mfe$ of \eqref{error}
\begin{align*} 
|\mfe(t)|_{H^{N-1}\times\dot H^{N-1}} 
% \le C( h_{\min}^{-1}, \mu_{\max}, | \zeta|_{H^{t_0 + 2}}, |\mfP \psi |_{H^{t_0+3/2}}  ) \mcE_1^{N-1} (\mfe(t))^{1/2}.
\le C( c_2,c_a) \mcE_1^{N-1} (\mfe(t))^{1/2}
\end{align*}
with $N$, $c_2$ as in Theorem \ref{JustificationTheorem}, 
%and $c_a$ as in \eqref{ca},
and, hence, by the 
triangle inequality,
\begin{align*} 
|\ep U(t) - \ep U_{a,1}(t,\cdot)|_{H^{N-1}\times\dot H^{N-1}} 
\le C( c_2,c_a) \mcE_1^{N-1} (\mfe(t))^{1/2} + \ep^{3-d/2} c_a.
\end{align*}

\

Summarizing the above analysis, we obtain as the main result of this article the following justification theorem.
\begin{theorem} \label{JustificationNonresonantInteraction}
Under Notation \ref{approximationnotation} (with $\sigma=\bond = 0$) and its assumptions, 
let $N \ge t_0 + t_0\vee 2 + 3/2$ with $t_0=3/2 > d/2$,   
%$1\le \mu\le \mu_{\max}$,  
and let
\begin{equation*}%\label{macroinitialdata}
\psi_{0j}^0 \in H^{N+6}(\R^d),
\quad
( \psi_{00}^0, \pl_t' \psi_{00}^0 ) \in H^{N+5}\times H^{N+4} (\R^d), 
\quad 
\psi_{1j}^0 \in H^{N+4}(\R^d), 
\end{equation*}
%with 
% $j\in J = \{1,2,3\}$ 
$j=1,2,3$,
be the initial data for the system 
\begin{align*} %\label{mainsystemfinal}
\begin{cases}
\pl_t' \psi_{0j}   +  \nabla\om_j \cdot   \nabla' \psi_{0j} = 0,
\\ \ds
{\pl_t'}^2  \psi_{00} -  \sqrt{\mu} \Delta' \psi_{00}   =  
% \sum_{j\in J}  
%\sum_j
\sum_{j=1}^3
\left(  (
%g_j^2 
\om_j^4
- |\xi_j|^2) \pl_t' + 2  
%\frac{\om_j}{b_j} 
\om_j 
\xi_j  \cdot  \nabla'  \right)  
|\psi_{0j}|^2,
\\
\pl_t'   \psi_{1j} + \nabla\om_j \cdot \nabla' \psi_{1j}   = E_j
\end{cases}
\end{align*}
with 
$1 \le  \mu \le \mu_{\max}<\infty$
and
\begin{align*} % \label{tildeEj}
 E_j 
= &\  \ii \frac12   \nabla' \cdot \mcH_\om(\bk_j) \nabla' \psi_{0j} 
-  \ii  \psi_{0j} \Big( \frac{1}{2\om_j}  (\om_j^4 - |\xi_j|^2)   \pl_t' +  \bk_j  \cdot  \nabla'\Big) \psi_{00}
+ \tilde E_j,
\end{align*}
where $\tilde E_j$ consists of cubic products of $\psi_{0j}$, see \eqref{truetildeEj}.
Let also 
%\begin{align*} %\label{Uappfinal}
%U_{a,1} = 
%& \sum_j \begin{pmatrix}
%%\ii
%%%\frac{\om_j}{b_j} 
%%\om_j
%%\psi_{0j} +  \ep 
%%% \zeta_{1j} 
%%% \zeta_{1j} = \ii \frac{ \om_j }{b_j} \psi_{1j} + \nablabo\om_j  \cdot \nabla' \psi_{0j}
%%% \zeta_{1j} = 
%%( \ii \om_j \psi_{1j} + 
%%%\nablabo\om_j  
%%\nabla\om_j
%%% , \qquad \nablabo\om_j =\frac1{b_j}\Big( \nabla\om_j  - \bond 2\frac{  \om_j}{b_j}  \bk_j \Big).
%%\cdot \nabla' \psi_{0j} 
%%)
%%
%%\ii \om_j \psi_{0j} +  \ep ( \ii \om_j \psi_{1j} + \nabla\om_j\cdot \nabla' \psi_{0j} )
%%
%\ii \om_j ( \psi_{0j} +  \ep \psi_{1j} ) + \ep \nabla\om_j\cdot \nabla' \psi_{0j} 
%%
%\\  \psi_{0j} + \ep \psi_{1j} \end{pmatrix} \ee_j 
%+ \sum_{ji}  \begin{pmatrix} \ep \zeta_{1ji} \\ \ep \psi_{1ji} \end{pmatrix}  \ee_{ji}
%+ \cc 
%+ \begin{pmatrix}  \ep \zeta_{10} \\ \psi_{00} \end {pmatrix}
%\end{align*}
\begin{align*} %\label{Uappfinal}
U_{a,1} (t,X) = 
& % \sum_{j\in J} 
\sum_{j=1}^3 \begin{pmatrix}
\ii \om_j ( \psi_{0j} +  \ep \psi_{1j} ) + \ep \nabla\om_j\cdot \nabla' \psi_{0j} 
\\  \psi_{0j} + \ep \psi_{1j} \end{pmatrix} 
%(\ep t, \ep X) 
(t',X')
%\ee_j 
\ee^{\ii (\xi_j\cdot X -\om_j t)}
\\& 
+ 
%\sum_{ji}  
%\sum_{(j,i) \in I}\begin{pmatrix} \ep \zeta_{1ji} \\ \ep \psi_{1ji} \end{pmatrix} 
\ep \sum_{(j,i) \in I}\begin{pmatrix} \zeta_{1ji} \\ \psi_{1ji} \end{pmatrix} 
%(\ep t, \ep X)
(t',X')
% \ee_{ji}
\ee^{\ii \left((\xi_j+\xi_i)\cdot X - (\om_j +\om_i) t\right)}
+ \cc 
+ \begin{pmatrix}  \ep \zeta_{10} \\ \psi_{00} \end {pmatrix} 
(t',X')
%(\ep t, \ep X)
\end{align*}
with $0 \le t'  = \ep t\le T_0$,  $X' =\ep X \in \R^d$, $0< \ep\le 1$,  
where $\zeta_{1ji}, \psi_{1ji}$ consist of quadratic products of $\psi_{0j}$, see  \eqref{zetapsi1ji}, and 
\begin{align*}
%\label{zeta10}
\zeta_{10} = - \pl_t' \psi_{00} +  \sum_j  ( \om_j^4 - |\xi_j|^2 ) |\psi_{0j}|^2.
\end{align*}

Then, for any $c_0>0$ there exists an $\ep_0\in (0,1]$ and a $T>0$ such that for all 
$\ep \in (0,\ep_0]$ and all 
$U^0 = (\zeta^0,\psi^0)^T \in H^{N+1}\times \dot H^{N+1}(\R^d)$ with
\begin{equation*}
1 - \ep |\zeta^0|_{\infty} \ge h_{\min}>0,  
\quad \mathfrak{a}_\ep (U^0)  \ge a_0 >0,
\quad
%\text{and}\quad
 | \ep U^0 - 
 %\ep U_{a,1}^0
 \ep U_{a,1} (0,\cdot)
 |_{H^{N}\times \dot H^{N}} \le c_0 \ep^{3-d/2}
% \quad \forall\ \ep\le \ep_0
% DA KANN MAN DAS \ep NICHT WEGKANZELN, DA U_a|_{t=0} NICHT UNBEDINGT EINE \mcE^{N-1} NORM HAT
\end{equation*} 
there exists a unique solution $U = (\zeta,\psi)^T \in E^N_{1,T/\ep}$ 
to the 
%gravity 
water-waves problem 
\begin{equation*}
% \pl_t ( \ep U ) +\mcN_{1,0}(\ep U)=0,\quad  \ep U|_{t=0} = \ep U^0,
% \pl_t U  +\mcN_{1,0} ( U ) =0,\quad  U|_{t=0} = \ep U^0,
\begin{cases}
\ds \partial_t \zeta - \mcG[\zeta]\psi = 0,  
\\
\ds \partial_t \psi + \zeta 
+ \frac12 |\nabla\psi|^2 - \frac{( \mcG[\zeta]\psi + \nabla\zeta\cdot \nabla \psi)^2}{ 2 (1 + |\nabla\zeta|^2)} = 0, 
\end{cases}
\qquad  U(0) = \ep U^0,
\end{equation*} 
which satisfies for all $t\le T_\ast/\ep$ with $T_\ast =  \min\{T,T_0\}$ the estimate
%\begin{align*}
%\mcE^{N-1}\big( \widetilde U (t) - \ep U_a (t) \big)^{1/2} 
% \le C(T, c_1, c_2, C_{app},\ep_0) (C_0 + T_\ast C_r )\ep^{3 - d/2}  
%%\quad \text{for $t\le T_\ast/\ep$, $T_\ast =  \min\{T,T_0\}$ } 
%%\quad \text{for $\ep t\le T_\ast = \min\{T,T_0\}$} 
%\end{align*}
%with $T,c_1,c_2$ as in the theorem. 
\begin{align*} 
%| % \ep U(t) 
%U(t)
%- \ep U_{a,1}(t)|_{H^{N-1}\times\dot H^{N-1}} 
| % \ep U(t) 
U(t)
- \ep U_{a,1}(t,\cdot)|_{H^{N-1}\times\dot H^{N-1}} 
% \le C( c_2,c_a) \mcE_1^{N-1} (\mfe(t))^{1/2} + \ep^{3-d/2} c_a.
%\le C( c_2,c_a) \Big(\mcE^{N-1}_1\big(\mfe(0)\big)^{1/2} + t \ep^{4 - d/2}  c_a \Big) + \ep^{3-d/2} c_a.
%\le C( c_2,c_a) \Big( C( c_2,c_a)  \big( | \ep U^0 - \ep U_{a,1}^0|_{H^{N}\times \dot H^{N}} + \ep^{3-d/2} c_a \big) + t \ep^{4 - d/2}  c_a \Big) + \ep^{3-d/2} c_a.
% \le C( c_2,c_a) \Big(  | \ep U^0 - \ep U_{a,1}^0|_{H^{N}\times \dot H^{N}} + \ep^{3-d/2} c_a + t \ep^{4 - d/2} + \ep^{3-d/2}  \Big) 
\le C( c, c_a,c_0
%,T_\ast
) \ep^{3 - d/2}, 
\end{align*}
where
\begin{align*}
c & = C(|U^0|_{H^{N+1}\times\dot H^{N+1}}, h_{\min}^{-1}, a_0^{-1} , \mu_{\max}),
\\
%\label{ca}
c_a &= C ( T_0,   h_{\min}^{-1} , \mu_{\max}, |\xi_j|, 
%\ep_0,  
%%% WE SET THIS =1
|\psi_{0j}^0|_{H^{N+6}},  |( \psi_{00}^0 , \pl_t' \psi_{00}^0) |_{ H^{N+5} \times H^{N+4}},  |\psi_{1j}^0|_{H^{N+4}} ).
\end{align*}
%where $\ds 0< a_0 = \inf_{X\in\R^d} \mathfrak{a}_1(\ep U^0) = \inf_{X\in\R^d} \mathfrak{a}_\ep (U^0)$ 
%% is given in 
%is obtained as in Remark \ref{SectionJustification}.2. 
\end{theorem}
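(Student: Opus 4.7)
The plan is to use the consistency result of Section \ref{SectionFormalDerivation} (Corollary \ref{wwexpansion2} and \eqref{UaConsistency}) together with the stability part of Theorem \ref{JustificationTheorem} applied to the second-order approximation $U_a$ of \eqref{Uappfinal}, and then to pass from $\ep U_a$ to $\ep U_{a,1}$ by a triangle inequality, paying the price $\ep^3|(\zeta_2,\psi_2)|$.

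First I would solve the modulation system \eqref{mainsystem} for $(\psi_{0j},\psi_{00},\psi_{1j})$ on $[0,T_0]$ with the prescribed initial data: the three homogeneous transport equations for $\psi_{0j}$ are solved explicitly, the inhomogeneous wave equation for $\psi_{00}$ is then solved by standard semigroup/energy methods, and finally the transport equations for $\psi_{1j}$ with source $E_j$ are solved. Counting derivatives carefully in \eqref{zeta1j}, \eqref{zetapsi1ji}, \eqref{zeta10}, \eqref{zeta2j}, \eqref{zetapsi2higher} and in $E_j$, $F_j$, $C_{jik}$, $D_{jik}$, the choice of regularity in \eqref{macroinitialdata} yields on $[0,T_0]$ bounds in $H^{N+3}(\R^d)$ (and in $H^N$ for their $\pl_t'$-derivatives at the $\ep^2$-level) for all macroscopic coefficients appearing in $U_a$. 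Next I would assemble $U_a$ according to \eqref{Uappfinal} and note that, by construction, it satisfies \eqref{UaConsistency}. Combining the a priori bounds on the macroscopic coefficients with the residual estimates of Corollary \ref{epexpansion} and the modulation estimate \eqref{ModulationEstimate}, one obtains $\sup_{t\le T_0/\ep} \mcE_1^N(\ep U_a(t)) \le c_a$ and $|(r_2^1,\mfP r_2^2)|_{L^\infty([0,T_0/\ep];H^N)} \le \ep^{-d/2} c_a$, as in \eqref{mcEN1epUa}--\eqref{resepUa}, while $|\zetaa(t)|_\infty \le C|\tilde\zetaa(t')|_{H^{t_0}}$ guarantees \eqref{macroest1} uniformly for $\ep \le \ep_0$ if $\ep_0$ is small enough in terms of $h_{\min}$.

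At this stage I would apply Theorem \ref{JustificationTheorem} in its $\ep=1$ form to the pair $(\ep U, \ep U_a)$, using the reformulation $\partial_t(\ep U) + \mcN_{1,0}(\ep U) = 0$ with initial data $\ep U^0$: by the first two hypotheses on $U^0$, existence of a unique solution $\ep U \in E^N_{1,T/\ep}$ is ensured with $T$ and $\sup \mcE_1^N(\ep U) \le c_2$ independent of $\ep \in (0,1]$, and the stability part yields
\begin{equation*}
\mcE_1^{N-1}(\mfe(t))^{1/2} \le C(c_2,c_a)\Big( \mcE_1^{N-1}(\mfe(0))^{1/2} + t\,\ep^{4-d/2} c_a \Big), \qquad \mfe = \ep U - \ep U_a,
\end{equation*}
for all $t \le T_\ast/\ep$ with $T_\ast = \min\{T,T_0\}$. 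The third hypothesis on $U^0$ together with the trivial pointwise estimate $|\ep U_a(0,\cdot) - \ep U_{a,1}(0,\cdot)|_{H^N\times \dot H^N} \le \ep^{3-d/2} c_a$ (via \eqref{ModulationEstimate}) and the upper bound \eqref{energyestimate} for $\mcE_1^{N-1}$ then gives $\mcE_1^{N-1}(\mfe(0))^{1/2} \le C(c,c_a,c_0)\ep^{3-d/2}$. Finally, to convert the energy norm on $\mfe(t)$ into the Sobolev norm stated in the conclusion, I would use the chain of estimates \eqref{estimatemfP}, \eqref{estimatemfP1}, \eqref{Pmu_above}, the product estimates of Lemma \ref{ProductEstimates} and the bound \eqref{mcGestimate} on $w[\ep\zeta]\psi$ to derive \eqref{energy_above}, namely $|\ep U - \ep U_a|_{H^{N-1}\times\dot H^{N-1}} \le C(c_2,c_a)\mcE_1^{N-1}(\mfe)^{1/2}$; the triangle inequality $U - \ep U_{a,1} = (\ep U - \ep U_a) + \ep^3(\zeta_2,\psi_2)$ together with another application of \eqref{ModulationEstimate} to the $\ep^2$-part of $U_a$ then closes the argument.

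The main obstacle is not conceptual but bookkeeping: the modulation estimate \eqref{ModulationEstimate} costs an $\ep^{-d/2}$ each time a macroscopic function on the $X'$-scale is passed to an $H^s$-norm on the $X$-scale, and one must verify that \emph{all} places where this happens (in $c_a$, in the bound on the residual, and when comparing $U_a$ to $U_{a,1}$) enter the final estimate only through the overall factor $\ep^{3-d/2}$. A careful derivative count, as indicated above, in the macroscopic system and in the formulas of the Appendix is needed to guarantee that the required regularities in \eqref{macroinitialdata} actually suffice to control all $H^N$-norms entering \eqref{mcEN1epUa}--\eqref{resepUa}; the slightly delicate point is the $H^N$-norm of $\pl_t'\tilde \zeta_{2j}$, for which \eqref{zeta2j} forces one additional derivative on $\psi_{0j}$ and motivates the choice $H^{N+6}$ for $\psi_{0j}^0$.
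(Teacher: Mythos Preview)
Your proposal is correct and follows essentially the same route as the paper: solve the modulation system \eqref{mainsystem}, build $U_a$ via \eqref{Uappfinal}, verify the consistency \eqref{UaConsistency} together with the bounds \eqref{macroest1}--\eqref{macroest23} through \eqref{mcEN1epUa}--\eqref{resepUa}, apply the stability part of Theorem~\ref{JustificationTheorem} in its $\ep=1$ form to obtain \eqref{errorest1}, and then pass to $U_{a,1}$ via the triangle inequality and the conversion \eqref{energy_above} between the energy and Sobolev norms. Your identification of the derivative count on $\pl_t'\tilde\zeta_{2j}$ as the bottleneck forcing $\psi_{0j}^0\in H^{N+6}$ matches the paper's reasoning exactly.
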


\

%%% KOMMENTARE UEBER DAS RESULTAT, WIE SIE AM ENDE DER INTRO UND WEITER OBEN STEHEN

We conclude this article, with some comments on our justification result. 

{\sc Remark \ref{SectionJustification}.4.}\ 
\begin{enumerate}
\item 
For $d=1,2$, the theorem holds true with $N=5$.
\item
An analogous justification result can be obtained for the transport equations \eqref{nonrestransport} by the leading-order-approximation 
$\ep U_{a,0}$, see Remark \ref{SectionFormalDerivation}.2., with an error of order $O(\ep^{2-d/2})$.
\item
The justification relies on the stability of the (original) water-waves equation, see Theorem \ref{JustificationTheorem}, and not on the stability of the derived system. 
In the case of finite depth, 
such a stability result for the water-waves equation does not exist for time-scales of higher order, e.g.,  $O(1/\ep^2)$.

Moreover, the well-posedness of the macroscopic linear transport equations and the macroscopic wave equation up to any time $T_0$ imply that the only restriction on the time of validity of the justification result is due to the existence time $T$ for the water-waves problem. 
However, $T_0$ influences the constant of the error estimate.
\item
There is a difference of one order between the Sobolev space in which the initial data of the approximation are assumed to exist 
and the order of the norm in which the initial distance to the original solution is measured, 
which is one order higher than the norm of the error for $t>0$. 
The latter difference results from the different estimates of the energy norm $\mcE_\ep^N (U)$ from above and from below, see \eqref{energyestimate} and \eqref{energy_above}, while the former one results from the stability result itself, see Theorem \ref{JustificationTheorem}, and for a more detailed analysis, \cite{Lannes}. 
However, an optimization of the regularity assumptions was not our main focus in this article. 
Recall, here, that $|U|_{H^N\times\dot H^N}^2  = |\zeta|_{H^N}^2  + |\nabla \psi|_{H^{N-1}}^2  $.
% \item

\end{enumerate}

%%%%%%%%%%%%%%%%%%%%%%%%%%%%%%%%%%%%%%%%%%%%%%%%%%%%%%%%
% \newpage

\section{Appendix}
We give here the definitions of the 
%maps $F_1$, $F_2$ and the 
functions $C$, $D$ appearing in Proposition \ref{finalexpansion}
with the abbreviations of Notation \ref{approximationnotation} and Proposition \ref{LannesResiduals}\eqref{propG0expansion}.
% We use the abbreviations of Notations \ref{approximationnotation} and \ref{notationprime}.
%
\begin{align*}%\label{F}
% F_2(\zeta_{00}) 
D_{00}
= & \bond  \Delta' \zeta_{00}  -  \big( G_0(\zeta_{00} G_0 \psi_0)  +  \zeta_{00}  \psi_0''   \big) G_0\psi_0,
\displaybreak[0]\\
% F_1(\zeta_{00})
C_{00}
= & - G_1 (\zeta_{00} G_0 \psi_0) - G_0(\zeta_{00} G_1 \psi_0)   - G_0 (\zeta_{00} G_0 \psi_1) 
 \displaybreak[0] \\& \notag
 %\\& 
 +  G_0 ( \zeta_{00}  G_0 ( 
 % \hat \zeta_0 
 ( \zeta_0 -\zeta_{00})
 G_0 \psi_0) )   
 +  G_0 ( \zeta_0  G_0 ( \zeta_{00}  G_0 \psi_0) )  
\displaybreak[0] \\&
-  \nabla' \zeta_{00} \cdot  \psi_0' 
 -  2 \zeta_{00} \nabla' \cdot \psi_0'
 -  \zeta_{00}  \psi_1''  
\displaybreak[0] \\&
+  {\ts\frac12} (  \zeta_{00}(
%\hat \zeta_0 + \zeta_0
2\zeta_0 - \zeta_{00}
)  G_0 \psi_0)''   +  {\ts\frac12} G_0 ( \zeta_{00}( 
%\hat \zeta_0 + \zeta_0
2\zeta_0 -\zeta_{00}
) \psi_0''), 
\displaybreak[0]  \\[.5em]
% D_0  = &   \sum_j \big(   ( \nabla' \psi_{0j} + \ii \xi_j \psi_{1j} ) \cdot    \ii \xi_j \overline{ \psi_{0j} }  +  ( -\ii\nabla g_j \cdot \nabla' \psi_{0j} +  g_j \psi_{1j}) g_j \overline{ \psi_{0j} }  \big)  + \cc ,
D_0  = &  {\ts \sum_j } \big(     \ii   \overline{ \psi_{0j} } ( \xi_j - g_j g'_j ) \cdot \nabla' \psi_{0j}  + ( - |\xi_j|^2 + g_j^2 )  \overline{ \psi_{0j} }  \psi_{1j} \big)  + \cc + D_{00} ,
\displaybreak[0]\ \\
C_0 = &   {\ts \sum_j }    \ii\xi_j  \cdot  \nabla'  \big(   \zeta_{0j} \overline{ \psi_{0j} }  \big)  +\cc + C_{00},
\displaybreak[0] \\[.5em]
%\end{align*}
%
% $\hat \zeta_0 = \zeta_0 -\zeta_{00}$, $\hat \zeta_1 = \zeta_1 -\zeta_{10}$, $\hat \psi_0 = \psi_0 -\psi_{00}$
%
%Using the formula 
%\begin{align*}
%\Big(\sum_j a_j \ee_j + \cc \Big)  \Big(\sum_i b_i \ee_i + \cc \Big) = \sum_{ji} c_{ji} \ee_{ji} + \cc + c_0
%\end{align*}
%with
%\begin{align*}
%c_{jj} = a_j b_j, \quad c_{ji} = a_j b_i + a_i b_j ,\quad c_{j,-i} = a_j \bar b_i + \bar a_i b_j, \quad c_0 = \sum_j a_j \bar b_j  +\cc,
%\end{align*}
%we obtain
% \begin{align*}
D_{jj}   = &  -    \ii \psi_{0j} ( \xi_j   +   g_j  g'_j ) \cdot \nabla'  \psi_{0j} + ( |\xi_j|^2 + g_j^2  )  \psi_{0j} \psi_{1j} , 
\displaybreak[0]\\\
C_{jj} =  & \ \ii ( g_j g'_{jj} - \xi_j ) \cdot \nabla' (\zeta_{0j} \psi_{0j})
+ \ii  \zeta_{0j}  ( g_{jj}  g'_j  - 2 \xi_j) \cdot \nabla' \psi_{0j}
\displaybreak[0] \\&
- ( g_j g_{jj}  - 2 |\xi_j|^2 ) (\psi_{0j} \zeta_{1j}  +  \zeta_{0j}  \psi_{1j} )  
\qquad \text{for $j\in J % = \{1,2,3\}
$,}
% \intertext{for $j\in J = \{1,2,3\}$,}
 \displaybreak[0]\\[.5em]
D_{ji}   = &  -   \ii  \psi_{0i} (  \xi_i + g_i g'_j ) \cdot \nabla' \psi_{0j}  -  \ii  \psi_{0j} ( \xi_j  + g_j g'_i ) \cdot \nabla' \psi_{0i}  
\displaybreak[0] \\& + ( \xi_j\cdot  \xi_i  +  g_j g_i ) ( \psi_{0i}  \psi_{1j}   +  \psi_{0j}  \psi_{1i} )  , 
\displaybreak[0]\ \\
C_{ji} = &
\  \ii  ( g_i g'_{ji} -  \xi_i ) \cdot \nabla' ( \zeta_{0j}\psi_{0i} ) 
+ \ii ( g_j g'_{ji} -   \xi_j) \cdot  \nabla'  ( \zeta_{0i} \psi_{0j} ) 
\displaybreak[0]\\&
+ \ii \zeta_{0j} ( g_{ji} g'_i  - \xi_{ji} ) \cdot \nabla' \psi_{0i}   
+  \ii \zeta_{0i} ( g_{ji} g'_j -  \xi_{ji}) \cdot \nabla' \psi_{0j} 
\displaybreak[0]\\& 
- ( g_i  g_{ji}   -  \xi_i \cdot  \xi_{ji} ) ( \psi_{0i}  \zeta_{1j} + \zeta_{0j}   \psi_{1i} )   
\displaybreak[0]\\& 
- (g_j  g_{ji}  -  \xi_j \cdot \xi_{ji} ) ( \psi_{0j}  \zeta_{1i} +  \zeta_{0i}  \psi_{1j}  )
\qquad\text{for $(j,i)\in I_< % = \{(1,2), (1,3), (2,3) \}
$, }
\displaybreak[0]\\[.5em]
D_{j,-i} = & \  \ii  \overline{  \psi_{0i} } (  \xi_i - g_i g'_j ) \cdot \nabla' \psi_{0j}  -   \ii \psi_{0j} (  \xi_j  - g_j g'_i ) \cdot \nabla'  \overline{\psi_{0i} } 
\displaybreak[0]\\& +  (  -  \xi_j  \cdot \xi_i +   g_j g_i  ) ( \overline{ \psi_{0i} }  \psi_{1j} + \psi_{0j} \overline{ \psi_{1i} } ),
\displaybreak[0]\ \\
C_{j,-i} = & 
 \ \ii ( g_i g'_{j,-i} + \xi_i ) \cdot  \nabla' ( \zeta_{0j} \overline{ \psi_{0i}} ) 
 +\ii ( g_j g'_{j,-i} - \xi_j ) \cdot   \nabla'  ( \overline{  \zeta_{0i} } \psi_{0j} )
\displaybreak[0]\\ &
- \ii \zeta_{0j}  (g_{j,-i}  g'_i +  \xi_{j,-i} ) \cdot \nabla'  \overline{ \psi_{0i}}
+ \ii    \overline{ \zeta_{0i} } ( g_{j,-i}  g'_j -  \xi_{j,-i} ) \cdot \nabla' \psi_{0j} 
\displaybreak[0]\\ &
- ( g_i g_{j,-i}   + \xi_i \cdot  \xi_{j,-i} ) ( \overline{ \psi_{0i} } \zeta_{1j} +  \zeta_{0j} \overline{  \psi_{1i} } ) 
\displaybreak[0]\\ &
- ( g_j g_{j,-i}  - \xi_j \cdot \xi_{j,-i} ) ( \psi_{0j}  \overline{ \zeta_{1i} } +  \overline{ \zeta_{0i} } \psi_{1j} ) 
\qquad\text{for $(j,i)\in I_< % = \{(1,2), (1,3), (2,3) \}
$, }
%\intertext{for $(j,i)\in I_< = \{(1,2), (1,3), (2,3) \}$, }
\displaybreak[0] \\[.5em]
D_j  = & - d_j^{(1)} + d_j^{(2)}  + g_j \big( {\ts \sum_i }  |\xi_i|^2 \zeta_{0i}  \overline{ \psi_{0i}} +\cc \big)  \psi_{0j}  
\displaybreak[0] \\ & 
+ \bond \Big(  \ii\xi_j \cdot d_j^{(3)}  +  |\xi_j|^2 \big(  {\ts  \sum_i } |\xi_i|^2  |\zeta_{0i}|^2 \big)   \zeta_{0j} \Big) ,
 \displaybreak[0]  \\
 C_j  = &  - g_j \big( d_j^{(4)} + d_j^{(5)}   \big)   - \big( d_j^{(6)}  +  d_j^{(7)}   + d_j^{(8)}   + d_j^{(9)} \big) 
\displaybreak[0]\\&  -  {\ts\frac12} |\xi_j|^2 d_j^{(10)} 
+  {\ts\frac12}   g_j d_j^{(11)} 
-   2 |\xi_j|^2 g_j   \big( { \ts \sum_i } |\zeta_{0i}|^2 \big) \psi_{0j} 
\qquad\text{for $j\in J$,} 
\displaybreak[0]  \\[.5em] 
D_{jik}  = & - d_{jik}^{(1)} + d_{jik}^{(2)}  + \bond  \ii \xi_{jik} \cdot d_{jik}^{(3)}, 
\displaybreak[0]\ \\ 
 C_{jik}   = &  - g_{jik} \big( d_{jik}^{(4)} +  d_{jik}^{(5)}  \big) 
 - \big( d_{jik}^{(6)}  + d_{jik}^{(7)}  + d_{jik}^{(8)} + d_{jik}^{(9)} \big)
\displaybreak[0] \\&
 -  {\ts\frac12} |\xi_{jik}|^2 d_{jik}^{(10)}
+  {\ts\frac12}   g_{jik} d_{jik}^{(11)} \qquad \text{for $ (j,i,k)\in K$.}
\end{align*}
The $d_j^{(n)}$, $d_{jik}^{(n)}$ ($n\in \{1,\ldots,11\}$) can be calculated from $c_{ji}^{(n)}$, $a_k^{(n)}$ by the formulas 
%
%Moreover, using the formula 
%\begin{align*}
%\Big(\sum_{ji} c_{ji} \ee_{ji} + \cc \Big)  \Big(\sum_k a_k \ee_k + \cc \Big)  =  \sum_j d_j \ee_j  + \sum_{jik} d_{jik} \ee_{jik} + \cc 
%\end{align*}
%with 
\begin{align*}
&
d_1 = {\ts \sum_i} c_{1i} \bar a_i  + {\ts \sum_{i=2,3} } c_{1,-i} a_i   ,
\displaybreak[0] \\&
d_2 =  c_{12} \bar a_1 + c_{22} \bar a_2  + c_{23} \bar a_3 + \bar c_{1,-2} a_1 + c_{2,-3} a_3    ,
\displaybreak[0] \\&
d_3 = {\ts  \sum_j } c_{j3} \bar a_j + {\ts  \sum_{j=1,2} }  \bar c_{j,-3} a_j  ,
\displaybreak[0] \\
& d_{jjj} = c_{jj} a_j \qquad \text{for $j\in J$,}
\displaybreak[0] \\& 
d_{jji} = c_{jj} a_i + c_{ji} a_j , \quad d_{iij} = c_{ii} a_j + c_{ji} a_i \qquad \text{for $(j,i)\in I_<$,}
\displaybreak[0] \\ & 
d_{jj,-i} = c_{jj}\bar a_i + c_{j,-i} a_j , \quad d_{ii,-j} = c_{ii}\bar a_j + \bar c_{j,-i} a_i \qquad \text{for $(j,i)\in I_<$,}
\displaybreak[0] \\ &
 d_{123}  
 = c_{23} a_1 + c_{13} a_2 +  c_{12} a_3 , 
\displaybreak[0]  \\& 
 d_{12,-3} =  c_{2,-3} a_1 + c_{1,-3} a_2  + c_{12} \bar a_3 ,
\displaybreak[0]  \\&
 d_{13,-2} =  \bar c_{2,-3} a_1 + c_{13} \bar a_2 + c_{1,-2} a_3 ,
 \displaybreak[0] \\&
 d_{23,-1} = c_{23} \bar a_1 + \bar c_{1,-3} a_2 + \bar c_{1,-2} a_3  
  \end{align*}
with
\begin{alignat*}{2}
c_{ji}^{(1)} & =  \ii \xi_{ji} \psi_{1ji} , &\qquad  a_k^{(1)} & = \ii \xi_k \psi_{0k},
\displaybreak[0]  \\ 
c_{ji}^{(2)} & = \gamma_{ji}^{(1)} ,  & \qquad  a_k^{(2)} & = g_k  \psi_{0k},  
\displaybreak[0]  \\
c_{ji}^{(3)} & = \gamma_{ji}^{(2)} ,  & \qquad   a_k^{(3)} & = \ii \xi_k \zeta_{0k} , 
\displaybreak[0] \\
% c_{ji}^{(4)} & =  , &\qquad  a_k^{(4)} & = ,
% \\
c_{ji}^{(4)} & =  \zeta_{1ji} 
, &\qquad  a_k^{(4)} & = g_k \psi_{0k} ,
 \displaybreak[0] \\
c_{ji}^{(5)} & =  g_{ji} ( \psi_{1ji} -    \gamma_{ji}^{(3)}  )
, &\qquad  a_k^{(5)} & = \zeta_{0k} ,
% \\
%c_{ji}^{(5)} & =  g_{ji} \psi_{1ji}   
%, &\qquad  a_k^{(5)} & = \zeta_{0k} 
%\\
%c_{ji}^{(6)} & =  g_{ji} \gamma_{ji}^{(3)}   
%, &\qquad  a_k^{(6)} & = \zeta_{0k} 
\displaybreak[0]  \\
c_{ji}^{(6)} & =  \ii \xi_{ji} \zeta_{1ji}  
, &\qquad  a_k^{(6)} & = \ii \xi_k  \psi_{0k} ,
\displaybreak[0]  \\
c_{ji}^{(7)} & =   \zeta_{1ji}   
, &\qquad  a_k^{(7)} & =   - |\xi_k|^2 \psi_{0k} ,
\displaybreak[0] \\
c_{ji}^{(8)} & =   \ii \xi_{ji} \psi_{1ji} 
, &\qquad  a_k^{(8)} & =   \ii\xi_k  \zeta_{0k} ,
\displaybreak[0] \\
c_{ji}^{(9)} & = - |\xi_{ji}|^2 \psi_{1ji} 
, &\qquad  a_k^{(9)} & =\zeta_{0k},
\displaybreak[0] \\
c_{ji}^{(10)} & = \gamma_{ji}^{(4)} 
, &\qquad  a_k^{(10)} & = g_k \psi_{0k},
\displaybreak[0] \\
c_{ji}^{(11)} & =  \gamma_{ji}^{(4)} 
, &\qquad  a_k^{(11)} & = -|\xi_k|^2 \psi_{0k} 
%\qquad \text{for $(j,i)\in I$}, 
\end{alignat*}
for $(j,i)\in I$, $k\in J$,  where
\begin{align*}
\gamma_{jj}^{(1)} & = g_{jj} \psi_{1jj} + (  |\xi_j|^2  - g_{jj}  g_j )  \zeta_{0j} \psi_{0j}  , 
\displaybreak[0] \\
\gamma_{jj}^{(2)} & =  {\ts \frac12} |\xi_j|^2 \zeta_{0j}^2 , \quad 
\gamma_{jj}^{(3)} = g_j \zeta_{0j} \psi_{0j} , \quad 
\gamma_{jj}^{(4)} = \zeta_{0j}^2 \qquad \text{for $j\in J$,}
\displaybreak[0] \\
\gamma_{ji}^{(1)} & = g_{ji} \psi_{1ji} +( |\xi_i|^2  - g_{ji} g_i  ) \zeta_{0j} \psi_{0i} + (   |\xi_j|^2 - g_{ji} g_j )  \zeta_{0i}\psi_{0j}  , 
\displaybreak[0] \\
\gamma_{ji}^{(2)}  & = \xi_j  \cdot   \xi_i  \zeta_{0j} \zeta_{0i}  , \quad
\gamma_{ji}^{(3)} = g_i \zeta_{0j} \psi_{0i}  + g_j \zeta_{0i}  \psi_{0j}  , \quad 
\gamma_{ji}^{(4)} = 2 \zeta_{0j} \zeta_{0i} 
% \qquad \text{for $(j,i)\in I_<$,}
, \quad (j,i)\in I_<, 
\displaybreak[0] \\
\gamma_{j,-i}^{(1)} & = g_{j,-i} \psi_{1j,-i} +  ( |\xi_i|^2  - g_{j,-i} g_i ) \zeta_{0j}  \overline{ \psi_{0i}} +  ( |\xi_j|^2 - g_{j,-i}   g_j ) \overline{\zeta_{0i}}    \psi_{0j} , 
\displaybreak[0] \\
 \gamma_{j,-i}^{(2)} & = -  \xi_j   \cdot  \xi_i  \zeta_{0j}\overline{\zeta_{0i} } , \quad
\gamma_{j,-i}^{(3)} = g_i \zeta_{0j} \overline{ \psi_{0i} }  + g_j \overline{\zeta_{0i}}  \psi_{0j}  , \quad
\gamma_{j,-i}^{(4)} = 2 \zeta_{0j} \overline{\zeta_{0i} } 
, \quad (j,i)\in I_<.
\end{align*}
%\begin{align*}
%\gamma_{jj}^{(1)} & = g_{jj} \psi_{1jj} + (  |\xi_j|^2  - g_{jj}  g_j )  \zeta_{0j} \psi_{0j}  , 
%\\
%\gamma_{ji}^{(1)} & = g_{ji} \psi_{1ji} +( |\xi_i|^2  - g_{ji} g_i  ) \zeta_{0j} \psi_{0i} + (   |\xi_j|^2 - g_{ji} g_j )  \zeta_{0i}\psi_{0j}  , 
%\\
%\gamma_{j,-i}^{(1)} & = g_{j,-i} \psi_{1j,-i} +  ( |\xi_i|^2  - g_{j,-i} g_i ) \zeta_{0j}  \overline{ \psi_{0i}} +  ( |\xi_j|^2 - g_{j,-i}   g_j ) \overline{\zeta_{0i}}    \psi_{0j} , 
%\\
%\gamma_{jj}^{(2)} & =  {\ts \frac12} |\xi_j|^2 \zeta_{0j}^2 , \quad 
%\gamma_{ji}^{(2)}  = \xi_j  \cdot   \xi_i  \zeta_{0j} \zeta_{0i}  , \quad
% \gamma_{j,-i}^{(2)}  = -  \xi_j   \cdot  \xi_i  \zeta_{0j}\overline{\zeta_{0i} } , 
% \\
%\gamma_{jj}^{(3)} &= g_j \zeta_{0j} \psi_{0j} , \quad 
%\gamma_{ji}^{(3)} = g_i \zeta_{0j} \psi_{0i}  + g_j \zeta_{0i}  \psi_{0j}  , \quad 
%\gamma_{j,-i}^{(3)} = g_i \zeta_{0j} \overline{ \psi_{0i} }  + g_j \overline{\zeta_{0i}}  \psi_{0j}  ,
%\\
%\gamma_{jj}^{(4)}& = \zeta_{0j}^2 , \quad 
%\gamma_{ji}^{(4)} = 2 \zeta_{0j} \zeta_{0i} ,\quad 
%\gamma_{j,-i}^{(4)} = 2 \zeta_{0j} \overline{\zeta_{0i} } 
%\end{align*}

% \end{document}

%%%%%%%%%%%%%%%%%%%%%%%%%%%%%%%%%%%%%%%%%%%%%%%%%%%%%%%%%%%%%%%%%%%%%%%%%%%%%
 \bibliographystyle{amsplain}

\end{document}